\newtheorem{theorem}{Theorem}[section]
\newtheorem{corollary}[theorem]{Corollary}
\newtheorem{lemma}[theorem]{Lemma}
\newtheorem{question}[theorem]{Question}
\newtheorem{remark}[theorem]{Remark}
\renewenvironment{proof}[1][Proof]{\noindent\textbf{#1.} }{\ \rule{0.5em}{0.5em}}
\begin{document}
\title[Submersion and homogeneous spray geometry]{Submersion and homogeneous spray geometry}
\author{Ming Xu}

\address{Ming Xu \newline
School of Mathematical Sciences,
Capital Normal University,
Beijing 100048,
P.R. China}
\email{mgmgmgxu@163.com}
%
%
\date{}

\begin{abstract}
We introduce the submersion between two spray structures and propose the submersion technique in spray geometry. Using this technique, as well as global invariant frames on a Lie group, we setup the general theoretical framework for homogeneous spray geometry. We define the spray vector field $\eta$ and the connection operator $N$ for a homogeneous spray manifold $(G/H,\mathbf{G})$ with a linear decomposition $\mathfrak{g}=\mathfrak{h}+\mathfrak{m}$. These notions generalize their counter parts in homogeneous Finsler geometry. We prove the correspondence between $\mathbf{G}$ and $\eta$ when the given decomposition is reductive, and that between geodesics on $(G/H,\mathbf{G})$ and integral curves of $-\eta$. We find the ordinary differential equations on $\mathfrak{m}$ describing parallel translations on $(G/H,\mathbf{G})$, and we calculate the S-curvature and Riemann curvature of $(G/H,\mathbf{G})$, generalizing L. Huang's curvature formulae for homogeneous Finsler manifolds.

\textbf{Mathematics Subject Classification (2010)}:
53B40, 53C30, 53C60

\textbf{Key words}: geodesic, homogeneous Finsler metric, homogeneous spray manifold,  invariant frame, parallel translation, Riemann curvature, S-curvature, submersion, spray structure
\end{abstract}
\maketitle

\section{Introduction}
\subsection{Background knowledge}
In spray geometry, we concern the {\it spray structure} $\mathbf{G}$ on a smooth manifold $M$, which is a smooth tangent vector field on $TM\backslash0$ with  the standard local coordinate representation $\mathbf{G}=y^i\partial_{x^i}-2\mathbf{G}^i\partial_{y^i}$,
where $\mathbf{G}^i=\mathbf{G}^i(x,y)$ is
positively 2-homogeneous for its $y$-entry \cite{Sh2001-1}.
Many geometric notions, geodesic, (linearly and nonlinearly) parallel translations, Riemann curvature, S-curvature, etc., can be defined for a spray structure \cite{Be1926,Sh2001-1}.

Spray geometry generalizes Finsler geometry. For a Finsler metric $F:TM\rightarrow[0,\infty)$ (see \cite{BCS2000,Sh2001-2} or Section \ref{subsection-2-4}), the induced spray structure, i.e., the {\it geodesic spray} of $F$, has the coefficients
\begin{equation}\label{300}
\mathbf{G}^i=\tfrac14g^{il}([F^2]_{x^ky^l}y^k
-[F^2]_{x^l}).
\end{equation}
The inverse problem concerns those spray structures which can not be induced by Finsler metrics, because they may exhibit interesting new geometric or dynamic phenomena. Recently, many important examples have been found for this project \cite{LM2021,LMY2019,LS2018,Ya2011}.

In this paper, Lie method is added to the research of spray geometry. Our goal is to setup a general theoretical framework for {\it homogeneous spray geometry}. A spray structure $\mathbf{G}$ on a smooth manifold $M$ is called {\it homogeneous} if $M$ admits the smooth transitive action of a Lie group $G$ which preserves $\mathbf{G}$, i.e., all $G$-actions map  geodesics to geodesics (see Section \ref{subsection-5-1} for more details).  Homogeneous spray geometry generalizes {\it homogeneous Finsler geometry} \cite{De2012}, because the geodesic spray for a homogeneous Finsler metric is automatically homogeneous.

Naturally a homogeneous spray manifold can be presented as $(G/H,\mathbf{G})$, in which $H$ is the isotropy subgroup at the origin $o=eH\in G/H$. We choose a linear decomposition $\mathfrak{g}=\mathfrak{h}+\mathfrak{m}$ for $G/H$, i.e., we have $\mathfrak{g}=\mathrm{Lie}(G)$ and $\mathfrak{h}=\mathrm{Lie}(H)$, and the tangent space $T_o(G/H)$ at $o$ is identified with $\mathfrak{m}$.
\subsection{Motivation and Hints}
The general philosophy of homogeneous geometry is that all  notions and properties can be reduced to $T_o(G/H)=\mathfrak{m}$ by $G$-actions. Guided by this thought, we ask

\begin{question}\label{main-question}
Given the decomposition $\mathfrak{g}=\mathfrak{h}+\mathfrak{m}$, how can we use $\mathfrak{m}$ to describe the geodesic, parallel translations and curvatures for a
homogeneous spray manifold $(G/H,\mathbf{G})$?
\end{question}

The hints for answering Question \ref{main-question} arise
from two sources.

One hint is from L. Huang's curvature formulae in homogeneous Finsler geometry \cite{Hu2015-1,Hu2015-2,Hu2017}, which need
 the spray vector field $\eta:\mathfrak{m}\backslash\{0\}\rightarrow\mathfrak{m}$
and the connection operator $N:\mathfrak{m}\backslash\{0\}\times\mathfrak{m}\rightarrow
\mathfrak{m}$ that he defined for a homogeneous Finsler manifold $(G/H,F)$ with a {\it reductive}
decomposition (i.e., an $\mathrm{Ad}(H)$-invariant decomposition) $\mathfrak{g}=\mathfrak{h}+\mathfrak{m}$.
So we see that
$$\mbox{{\it spray vector field and connection operator play the main roles.}}$$
If these notions can be generalized to homogeneous spray geometry, then some of L. Huang's homogeneous curvature formulae  follow naturally.

The other hint is from the recent progress in the study of
a left invariant spray structure $\mathbf{G}$ on a Lie group $G$ \cite{Xu2021-1,Xu2021-2}. Here the Lie group $G$ is viewed as the coset space $G/H=G/\{e\}$ with the unique linear decomposition $\mathfrak{g}=\mathfrak{h}+\mathfrak{m}=0+\mathfrak{g}$, which is obviously reductive.
In this context,
the left invariant frame $\{\widetilde{U}_i,\partial_{u^i},\forall i\}$
and the right invariant frame $\{\widetilde{V}_i,\partial_{v^i},\forall i\}$ can be defined  on $TG$ (see Section 2.1 in \cite{Xu2021-1} or Section \ref{subsection-4-1}).
Using these frames, the spray vector field $\eta$ and connection operator $N$ can be defined for $(G,\mathbf{G})$, and
the S-curvature formula, the Riemann curvature formula, etc., for $(G,\mathbf{G})$, can be globally presented. If we use $\eta$ and $N$ to translate these curvature formulae, we see again L. Huang's formulae in \cite{Hu2015-1}. By left translations, geodesics and parallel translations along smooth curves on $(G,\mathbf{G})$ are corresponded to integral curves of $-\eta$ and some ordinary differential equations (i.e., ODEs in short) on $T_eG=\mathfrak{g}$ respectively. See \cite{Xu2021-1,Xu2021-2} or Section 4 for the precise statements. To summarize,
Question \ref{main-question} can be answered for a left invariant spray structure, using the technique of global invariant frames.
Moreover, it has the byproduct of exhibiting many intrinsic differences between left invariant spray geometry and left invariant Finsler geometry, which were independently captured in \cite{HM2021} and \cite{Xu2021-1}. So we see that,
$$\mbox{
{\it the global  invariant frames on $TG$ provide a useful tool.}}$$
\subsection{Submersion in spray geometry}

Consider Question \ref{main-question} for a homogeneous spray manifold $(G/H$, $\mathbf{G})$ with nontrival $H$, we are faced with several obstacles.

Firstly, generally speaking, there are no global
invariant frames on $G/H$ or $T(G/H)$.
In \cite{Hu2015-1}, L. Huang has used a local invariant frame and the Nomizu connection \cite{KN1969,No1954}. So
his calculation can only provide precise information at the origin $o=eH\in G/H$. Though his method is enough for calculating the homogeneous curvatures,  it is not sufficient for the tasks of describing geodesics and parallel translations.

Secondly, the methods in \cite{Hu2015-1}, including the formulae for the spray vector field and the connection operator (see (\ref{301}) and (\ref{302}) in Remark \ref{remark-5-4}), and the usage of the Nomizu connection, heavily rely on a reductive decomposition. In homogeneous Finsler geometry, without loss of generality, we may assume the $G$-action on the homogeneous Finsler manifold $(G/H,F)$ is effect, then the existence of reductive decompositions is a well known fact (see Lemma 2.2 in \cite{XN2021}, which is  valid in homogeneous Finsler geometry). However, we do not know a similar result in homogeneous spray geometry.

To overcome these difficulties, we let the submersion technique
step in. Submersion technique in Riemannian geometry has a relatively long history \cite{Kl1982,Ne1966} and plays an important role in the study of sectional curvature \cite{Ch1973,GM1974}. Finslerian  Submersion was introduced and studied by J.C.\'{A}. Paiva and C.E. Dur\'{a}n \cite{PD2001} and applied to homogeneous Finsler geometry for defining the normal homogeneity \cite{XD2017-2} and proving a homogeneous flag curvature formula \cite{XDHH}.

In this paper, we define the submersion in spray geometry as a pair $(\pi,\mathcal{L})$, in which $\pi:(\overline{M},\overline{\mathbf{G}})\rightarrow (M,\mathbf{G})$ is a smooth submersion between two spray manifolds, and $\mathcal{L}$, which is called the {\it horizonal bundle} for this submersion, is a distribution on $\overline{M}$ (i.e., a smooth linear sub-bundle of $T\overline{M}$), such that $\overline{\mathbf{G}}$ is tangent to $\mathcal{L}\backslash0\subset T\overline{M}\backslash0$, and geodesics $\overline{c}(t)$ on $(\overline{M},\overline{\mathbf{G}})$ which are tangent to $\mathcal{L}$ are related to geodesics on $(M,\mathbf{G})$ by $\pi$. See Section \ref{subsection-3-1} for the precise definition.

Though the requirements for a submersion between two spray structures are relatively weak, it is enough for us to lift the geometric notions on $(M,\mathbf{G})$, for example, geodesics, parallel translations, Riemann curvature, etc,
to $\mathcal{L}$ and describe them on $\overline{M}$. See Lemma \ref{lemma-3-5}, Lemma \ref{lemma-3-6} and Lemma \ref{lemma-3-7} for the precise statements. These lemmas provide the submersion technique in spray geometry. To apply this technique to the study of a homogeneous spray manifold
$(G/H,\mathbf{G})$, we only need to find the suitable submersion $(\pi,\mathcal{L})$ between a left invariant spray structure $\overline{\mathbf{G}}$ on $G$ and the homogeneous spray structure $\mathbf{G}$, so that we can lift everything from $(G/H,\mathbf{G})$ to $\mathcal{L}$, and then use the global invariant frames on $TG$ to study them.

The submersion pair $(\pi,\mathcal{L})$ generalizes Riemannian submersion, but not all Finslerian submersion. The reason is that, for a Finslerian submersion
$\pi:(\overline{M},\overline{F})\rightarrow (M,F)$, the fiber $\mathcal{L}_{\overline{p}}$ with $\pi(\overline{p})=p$,
of the horizonal bundle $\mathcal{L}$, i.e.,
$$\mathcal{L}_{\overline{p}}=\{\overline{v}\ |\
\overline{F}(\overline{p},\overline{v})=F(p,
\pi_*(\overline{v}))\}\subset T_{\overline{p}}\overline{M},$$
may not be a linear subspace. We may alter the definition in Section \ref{subsection-3-1}, permitting the sub-bundle $\mathcal{L}$ to be nonlinear, so that all Finslerian submersions can be included. However, this generalization seems not practically useful because (2) in Lemma \ref{lemma-3-4} (as well as Lemma \ref{lemma-3-6} and Lemma \ref{lemma-3-7}) may lose their validity.

The submersion pair $(\pi,\mathcal{L})$ can also produce a
Ehresmann connection which is useful for studying foliations \cite{Eh1950,Zh2016}. In later discussion, we avoid this terminology and concentrate on spray geometry.
\subsection{Main results}
\label{subsection-1-4}

Firstly, we define and study the spray vector field $\eta$ and the connection operator $N$, and construct the wanted submersion for
a homogeneous spray manifold $(G/H,\mathbf{G})$ with a linear decomposition $\mathfrak{g}=\mathfrak{h}+\mathfrak{m}$.

The following observation is fundamental and useful. Any smooth curve $c(t)$ on $G/H$ with $c(0)=g\cdot o$ for some $g\in G$ and nowhere-vanishing $\dot{c}(t)$ can be uniquely lifted to a smooth curve $\overline{c}(t)$ on $G$, which is defined at least around $t=0$, satisfying $\overline{c}(0)=g$, $c(t)=\overline{c}(t)\cdot o$ and $(L_{\overline{c}(t)^{-1}})_*(\dot{\overline{c}}(t))=
(\overline{c}(t)^{-1})_*(\dot{c}(t))
\in\mathfrak{m}\backslash0$ everywhere, and this $\overline{c}(t)$ can exist more globally,
when the decomposition $\mathfrak{g}=\mathfrak{h}+\mathfrak{m}$ is reductive or $H$ is compact  (see Lemma \ref{lemma-5-1}).

Let $c(t)$ be a geodesic on $(G/H,\mathbf{G})$ with $c(0)=o$ and $\dot{c}(0)=y\in \mathfrak{m}\backslash\{0\}$,
$\overline{c}(t)$ the smooth curve on $G$ provided by Lemma \ref{lemma-5-1} for $c(t)$, satisfying $\overline{c}(0)=e$, $c(t)=\overline{c}(t)\cdot o$ and $y(t)=(L_{\overline{c}(t)^{-1}})_*(\dot{\overline{c}}(t))\in
\mathfrak{m}\backslash\{0\}$ everywhere.  Then
the {\it spray vector field}
is  $\eta(y)=-\tfrac{{\rm d}}{{\rm d}t}|_{t=0}y(t)$.
It is a
positively 2-homogeneous smooth map from $\mathfrak{m}\backslash\{0\}$ to $\mathfrak{m}$ (see Lemma \ref{subsection-5-2}), so we can further define the {\it connection operator} $N:\mathfrak{m}\backslash\{0\}\times\mathfrak{m}
\rightarrow\mathfrak{m}$ by $N(y,u)=\tfrac12D\eta(y,u)-\tfrac12[y,u]_\mathfrak{m}$,
in which $D\eta(y,u)=\tfrac{{\rm d}}{{\rm d}t}|_{t=0}\eta(y+tu)$.

The following correspondence between geodesics on $(G/H,\mathbf{G})$ and integral curves of $-\eta$ on $\mathfrak{m}\backslash\{0\}$ follows naturally (see Theorem \ref{theorem-5-3}).
\medskip
\\
\noindent
{\bf Theorem A}\quad
{\it Let $(G/H,\mathbf{G})$ be a homogeneous spray manifold with a linear decomposition $\mathfrak{g}=
\mathfrak{h}+\mathfrak{m}$ and $\eta$ the spray vector field. Then we have a one-to-one correspondence between the following two sets:
\begin{enumerate}
\item the set of all geodesics $c(t)$ on $(G/H,\mathbf{G})$, which is defined around $t=0$ and satisfies $c(0)=o$;
\item the set of all integral curves $y(t)$ of $-\eta$ on $\mathfrak{m}\backslash\{0\}$, which is defined around $t=0$.
\end{enumerate}
The correspondence is from $c(t)$ to $y(t)=
(L_{\overline{c}(t)^{-1}})_*(\dot{\overline{c}}(t))$, in which $\overline{c}(t)$ is the smooth curve on $G$ satisfying $\overline{c}(0)=e$, $c(t)=\overline{c}(t)\cdot o$ and $(L_{\overline{c}(t)^{-1}})_*(\dot{\overline{c}}(t))
\in\mathfrak{m}\backslash\{0\}$ for all possible values of $t$.
The range for the parameter $t$ in this correspondence can be changed to an arbitrary interval $(a,b)$ with $a<0<b$ when $\mathfrak{g}=\mathfrak{h}+\mathfrak{m}$ is reductive
or $H$ is compact.
}
\medskip

The submersion for a homogeneous spray structure can be constructed as following.
\medskip
\\
\noindent
{\bf Theorem B}\quad
{\it Let $(G/H,\mathbf{G})$ be a homogeneous spray manifold with a linear decomposition $\mathfrak{g}=\mathfrak{h}+\mathfrak{m}$ and $\eta:\mathfrak{m}\backslash\{0\}\rightarrow\mathfrak{m}$ the spray vector field.
Let $\overline{\mathbf{G}}$ be a left invariant spray structure on $G$ such that its spray vector field $\overline{\eta}:\mathfrak{g}\backslash\{0\}
\rightarrow\mathfrak{g}$ satisfies $\eta=\overline{\eta}|_{\mathfrak{m}\backslash\{0\}}$. Denote $\pi: G\rightarrow G/H$ the smooth map $\pi(g)=g\cdot o$ for all $ g\in G$, and $\mathcal{L}=\cup_{g\in G}(L_g)_*(\mathfrak{m})\subset TG$ a distribution on $G$.
Then
$(\pi,\mathcal{L})$ is a submersion from $(G,\overline{\mathbf{G}})$ to $(G/H,\mathbf{G})$.}
\medskip

Generally speaking the relation between a homogeneous spray structure and its spray vector field might be very complicated.
However, when we have a reductive decomposition,
it can be easily understood.

Using the submersion technique, we prove a one-to-one correspondence between the following two sets, with respect
to a given reductive decomposition $\mathfrak{g}=\mathfrak{h}+\mathfrak{m}$ for $G/H$:
\begin{enumerate}
\item the set of all $G$-invariant spray structures $\mathbf{G}$ on $G/H$;
\item the set of all $\mathrm{Ad}(H)$-invariant smooth maps
$\eta:\mathfrak{m}\backslash\{0\}\rightarrow\mathfrak{m}$.
\end{enumerate}
The correspondence is from $\mathbf{G}$ to its spray vector field (see Theorem \ref{theorem-5-5}).

As a byproduct, we see that any homogeneous spray structure
$\mathbf{G}$ on $G/H$ with a reductive decomposition $\mathfrak{g}=\mathfrak{h}+\mathfrak{m}$ can be uniquely presented as $\mathbf{G}=\mathbf{G}_0-\mathbf{H}$,
in which
$\mathbf{G}_0$ is the spray structure for the Nomizu connection, with the spray vector field $\eta=0$,
and $\mathbf{H}$ is the $G$-invariant smooth tangent vector field
on $T(G/H)\backslash\{0\}$ which is tangent to each $T_x(G/H)$ and $\mathbf{H}|_{T_o(G/H)}$ provides the spray vector field.

Further more, we assume that $\mathbf{G}$ is the geodesic spray of a homogeneous Finsler metric on $G/H$. Using the submersion technique again, we prove the spray vector field and the connection operator of $\mathbf{G}$, with respect to the given
reductive decomposition, coincide with those in \cite{Hu2015-1} defined for $F$ respectively (see Theorem \ref{theorem-5-7}).

Nextly, we study the parallel translations on $(G/H,\mathbf{G})$.

Because of Theorem B, the submersion technique can be applied. Practically,
We fix the $\overline{\mathbf{G}}$ in Theorem B, such that
its spray vector field $\overline{\eta}$ satisfies $\overline{\eta}(y)=\eta(y_\mathfrak{m})$ in a conic open neighborhood of $\mathfrak{m}\backslash\{0\}$. This technical assumption helps simplify some calculation.

We use Lemma \ref{lemma-3-5} and Lemma \ref{lemma-3-6} to lift parallel translations  to $\mathcal{L}$, and use the global invariant frame on $TG$ to find the ODEs on $\mathfrak{m}$. For the linearly and nonlinearly parallel translations, we have the following two theorems respectively (see Theorem \ref{theorem-6-1} and Theorem \ref{theorem-6-2}).
\medskip

\noindent
{\bf Theorem C}\quad
{\it
Let $(G/H,\mathbf{G})$ be a homogeneous spray manifold with a linear decomposition $\mathfrak{g}=\mathfrak{h}+\mathfrak{m}$,
$c(t)$ any smooth curve on $G/H$ with $c(0)=o$ and nowhere-vanishing $\dot{c}(t)$, and $\overline{c}(t)$ the smooth curve on $G$ satisfying $\overline{c}(0)=e$, $c(t)=\overline{c}(t)\cdot o$ and $y(t)=(L_{\overline{c}(t)^{-1}})_*
(\overline{c}(t))\in\mathfrak{m}\backslash\{0\}$ for each $t$.
Then for any smooth vector field $W(t)=(\overline{c}(t))_*(w(t))$ along $c(t)$,
we have
\begin{equation*}\label{069}
D_{\dot{c}(t)}W(t)=(\overline{c}(t))_*
(\tfrac{{\rm d}}{{\rm d}t}w(t)+N(y(t),w(t))+[y(t),w(t)]_\mathfrak{m}).
\end{equation*}
In particular, $W(t)$ is linearly parallel along $c(t)$ if and only if
$w(t)$ satisfies
$$\tfrac{{\rm d}}{{\rm d}t}w(t)+N(y(t),w(t))+[y(t),w(t)]_\mathfrak{m}=0$$
everywhere.}\medskip

\noindent
{\bf Theorem D}\quad
{\it
Let $(G/H,\mathbf{G})$ be a homogeneous spray manifold with a linear decomposition $\mathfrak{g}=\mathfrak{h}+\mathfrak{m}$,
$c(t)$
any smooth curve on $G/H$ with $c(0)=o$ and nowhere-vanishing $\dot{c}(t)$, and $\overline{c}(t)$ the smooth curve on $G$ satisfying $\overline{c}(0)=e$, $c(t)=\overline{c}(t)\cdot o$ and $w(t)=(L_{\overline{c}(t)^{-1}})_*
(\overline{c}(t))\in\mathfrak{m}\backslash\{0\}$ for each $t$.
Suppose $Y(t)=(\overline{c}(t))_*(y(t))$ is a nowhere-vanishing smooth vector field along $c(t)$. Then $Y(t)$ is nonlinearly parallel
along $c(t)$ if and only if $y(t)$ satisfies
\begin{equation*}
\tfrac{{\rm d}}{{\rm d}t}y(t)+N(y(t),w(t))=0
\end{equation*}
everywhere.}\medskip

Theorem C is applied later to calculating or interpreting homogeneous curvature formulae. Theorem D is useful for studying the restricted holonomy group \cite{HMM2021,Ko1957} of a homogeneous spray manifold and the Landsberg problem for a homogeneous Finsler manifold \cite{XD2021}.


Finally, we show that homogeneous curvature formulae
can be derived using the techniques and results previously mentioned, including the submersion technique, the global invariant frame, descriptions for the parallel translations, etc.
Here we take the S-curvature, Landsberg curvature and Riemann curvature for example. See \cite{HM2019,HS2019,HYS2016,Ma1996,Sh1997,XD2018,XDHH}
for some usage of these curvatures in general and homogeneous Finsler geometry.

The homogeneous S-curvature and Landsberg curvature formulae are immediate corollary of Theorem C. For the S-curvature of
a homogeneous spray manifold, we have (see Theorem \ref{theorem-6-3}).
\medskip

\noindent
{\bf Theorem E}\quad
{\it Let $(G/H,\mathbf{G})$ be a homogeneous spray structure with a linear decomposition $\mathfrak{g}=\mathfrak{h}+\mathfrak{m}$. Suppose that the $\mathrm{Ad}(H)$-action on $\mathfrak{g}/\mathfrak{h}$ is unimodular. Then for $\mathbf{G}$ and any $G$-invariant smooth measure $d\mu$ on $G/H$, the S-curvature satisfies
\begin{equation*}
\mathbf{S}(o,y)=\mathrm{Tr}_\mathbb{R}(N(y,\cdot)+
\mathrm{ad}_\mathfrak{m}(y)),
\end{equation*}
for any $y\in\mathfrak{m}\backslash\{0\}=
T_o(G/H)\backslash\{0\}$. Here $\mathrm{ad}_\mathfrak{m}(y):\mathfrak{m}\rightarrow\mathfrak{m}$ is the linear map $w\mapsto[y,w]_\mathfrak{m}$.
}\medskip

For the Landsberg curvature of a homogeneous Finsler manifold, we re-prove L. Huang's formula in \cite{Hu2015-1}
(see Theorem \ref{theorem-6-4}), with the slight refinement that the reductive decomposition is no longer needed.

The calculation for the Riemann curvature of a homogeneous spray manifold $(G/H,\mathbf{G})$ is much harder.
However, after lifted to $\mathcal{L}\subset TG$ where the global invariant frames are available, by Lemma \ref{lemma-3-7}, the calculation is almost the same as that in \cite{Xu2021-1}, proving Theorem C for the Riemann curvature
of a left invariant spray structure.

We not only generalize L. Huang's Riemann curvature formula \cite{Hu2015-1,Hu2015-2,Hu2017} to homogeneous spray geometry, but also provide an interesting new interpretation for
it.
%
%
Summarizing
Theorem \ref{theorem-6-6} and Corollary \ref{corollary-6-7},
we have
\medskip

\noindent
{\bf Theorem F}\quad
{\it
Let $(G/H,\mathbf{G})$ be a homogeneous spray manifold with a linear decomposition $\mathfrak{g}=\mathfrak{h}+\mathfrak{m}$. Then for any $y\in\mathfrak{m}\backslash\{0\}=T_o(G/H)$, the Riemann curvature operator $\mathbf{R}_y:\mathfrak{m}\rightarrow\mathfrak{m}$ satisfies
\begin{eqnarray*}
\mathbf{R}_y(w)=[y,[w,y]_\mathfrak{h}]_\mathfrak{m}
+DN(\eta(y),y,w)-N(y,N(y,w))+N(y,[y,w]_\mathfrak{m})
-[y,N(y,w)]_\mathfrak{m},
\end{eqnarray*}
in which $DN(\eta(y),y,w)=\tfrac{{\rm d}}{{\rm d}s}|_{s=0}N(y+s\eta(y),w)$.

Let $c(t)$ be a geodesic on $(G/H,\mathbf{G})$,
which is defined around $t=0$ and satisfies $c(0)=o$,
$\overline{c}(t)$ the smooth curve on $G$ provided by Lemma \ref{lemma-5-1} for $c(t)$, satisfying $\overline{c}(0)=e$, $c(t)=\overline{c}(t)\cdot o$ and
$y(t)=(L_{c(t)^{-1}})_*(\dot{c}(t))\in\mathfrak{m}\backslash\{0\}$
everywhere, and
$W(t)=(\overline{c}(t))_*(w(t))$
a linearly parallel vector field along $c(t)$, where $w(t)=w^i(t)e_i$ is viewed as a smooth vector field along
the smooth curve $y(t)$ in $\mathfrak{m}\backslash\{0\}$. Then $N(t)=N(y(t),w(t))$
 and $R(t)=\mathbf{R}_{y(t)}(w(t))$ satisfy
 \begin{eqnarray*}
 N(t)=[w(t),\eta]\quad\mbox{and}\quad R(t)=[y(t),[w(t),y(t)]_\mathfrak{h}]_\mathfrak{m}+
 [\eta,N(t)]
 \end{eqnarray*}
 when they are viewed as smooth vector fields along $y(t)$.
}\medskip

Notice that the second statement in Theorem F was firstly proved in \cite{Xu2021-2} for a left invariant spray structure,
where a slight generalization for
the bracket between smooth vector fields is needed (see  Remark 1.3 in \cite{Xu2021-2} or Remark \ref{remark-6-10}).
%

\subsection{Conclusion remarks}

By the theorems listed in Section \ref{subsection-1-4},
we have setup a general theoretical framework for homogeneous spray geometry, and answered Question \ref{main-question} simultaneously. In one hand, this project can be continued by
generalizing more notions,  Clifford-Wolf homogeneity \cite{BN2009,XD2014}, weak symmetry \cite{DH2010,Se1956,WC2021}, homogeneous geodesic and geodesic orbit property \cite{CWZ2021,KV1991,YD2014}, etc, to homogeneous spray geometry. In the other hand, it can be applied to some special cases.

For example, those theorems in Section \ref{subsection-1-4} can be applied to homogeneous Finsler geometry. Some known results or formulae can be re-proved without using a reductive decomposition, and more interesting new results are expected. Besides the Lie algebraic information from $G/H$ and  $\mathfrak{g}=\mathfrak{h}+\mathfrak{m}$, and the dynamic system induced by the spray vector field $\eta$ and the connection operator $N$ on $\mathfrak{m}\backslash\{0\}$, we have one more ingredient, the Hessian geometry \cite{Sh2007,Xu2021-0} for the $\mathrm{Ad}(H)$-invariant Minkowski norm $F$ on $\mathfrak{m}$ which determines a homogeneous Finsler metric. The interaction among these three
will make a good story in homogeneous Finsler geometry.

Another application is homogeneous pseudo-Finsler geometry, because the geodesic spray of a pseudo-Finsler metric can be defined by the same coefficient formula (\ref{300}). Usually, a pseudo-Finsler metric $F$ on $M$ is {\it conic}, i.e., it is only defined on a conic open subset $\mathcal{A}$ in $TM\backslash0$, i.e., each $\mathcal{A}_x=\mathcal{A}\cap T_xM$ is conic in $T_xM\backslash\{0\}$ \cite{JS2017,JV2018}. Based on this observation, we can define a {\it conic spray structure} $\mathbf{G}$ on $M$, which is then a smooth tangent vector field on the conic open subset $\mathcal{A}\subset TM\backslash0$ satisfying similar requirements as in
Section \ref{subsection-2-1}. The $G$-invariancy for a conic spray structure can also be defined. All discussions and results in this paper, after some minor changes, are valid for conic spray structures, so they can be applied to homogeneous pseudo-Finsler geometry.

This paper is organized as following. In Section 2, we summarize some basic knowledge in spray geometry and Finsler geometry. In Section 3, we define the submersion between two spray structures and prove some lemmas for the submersion technique. In Section 4, we introduce the tool of global invariant frames on a Lie group and its tangent bundle, and recall some known results for a left invariant spray structure on a Lie group. In Section 5, we generalize the spray vector field and the connection operator to homogeneous spray geometry, and construct the wanted submersion for a homogeneous spray manifold. Moreover, we discuss the correspondence between a homogeneous spray structure $\mathbf{G}$ and its spray vector field $\eta$, and that between geodesics on $(G/H,\mathbf{G})$ and integral curves of $-\eta$. In Section 6, we use the submersion technique to describe the parallel translations and calculate some curvature formulae in homogeneous spray and Finsler geometries.

\section{Preliminaries in spray geometry}
%
%
%
%

In this section, we summarize some basic knowledge on spray and Finsler geometries. See \cite{BCS2000,Sh2001-1,Sh2001-2}
for more details.

\subsection{Spray structure and geodesic}
\label{subsection-2-1}

Let $M^n$ be an $n$-dimensional smooth manifold. Then a {\it spray structure} on $M$ is a smooth tangent vector field $\mathbf{G}$ on the slit tangent bundle $TM\backslash0$, which can be locally
presented as
\begin{equation}\label{001}
\mathbf{G}=y^i\partial_{x^i}-2\mathbf{G}^i\partial_{y^i}
\end{equation}
for any {\it standard local coordinate} $(x^i,y^j)$ (i.e., $x=(x^i)\in M$ and
$y=y^j\partial_{x^j}\in T_xM$), where
$\mathbf{G}^i=\mathbf{G}^i(x,y)$ is positively 2-homogeneous
for its $y$-entry (i.e., $\mathbf{G}^i(x,\lambda y)=\lambda^2
\mathbf{G}^i(x,y)$, $\forall \lambda>0$). A spray structure $\mathbf{G}$ is called {\it affine}, if every coefficient $\mathbf{G}^i$ is quadratic for its $y$-entry.

In one hand, geodesics can be defined for a spray structure.
A smooth curve $c(t)$ on $(M,\mathbf{G})$ with nowhere-vanishing $\dot{c}(t)$ is called a {\it geodesic}
if its lifting $(c(t),\dot{c}(t))$ in $TM\backslash0$ is an integral curve of $\mathbf{G}$. Locally, a geodesic $c(t)=(c^i(t))$
is characterized by the following ODE for any standard local coordinate $(x^i,y^j)$,
$$\ddot{c}(t)+2\mathbf{G}^i(c(t),\dot{c}(t))=0,\quad\forall i.$$

In the other hand, the set of all geodesics on $(M,\mathbf{G})$
completely determines $\mathbf{G}$.
%

\subsection{Covariant derivative and parallel translation}
\label{subsection-2-2}

Following (\ref{001}) for the spray structure $\mathbf{G}$ on $M$,
we denote

\begin{equation}
\mathbf{N}^i_j=\tfrac{\partial}{\partial y^j}\mathbf{G}^i\quad\mbox{and}\quad
\delta_{x^i}=\partial_{x^i}-\mathbf{N}_i^j\partial_{y^j},
\end{equation}
which are functions and tangent vector fields locally defined on $M$ and $TM\backslash0$ respectively.

The {\it linearly covariant derivative} along a smooth curve $c(t)$ on $M$ with nowhere-vanishing $\dot{c}(t)$ can be locally presented as
$$
D_{\dot{c}(t)}X(t)=(\tfrac{{\rm d}}{{\rm d}t}X^i(t)+
\mathbf{N}^i_j(c(t),\dot{c}(t))X^j(t))\partial_{x^i}|_{c(t)},$$
in which $X(t)=X^i(t)\partial_{x^i}|_{c(t)}$ is a smooth vector field along $c(t)$. A smooth vector field $X(t)$ is called {\it linearly parallel} along $c(t)$ if $D_{\dot{c}(t)}X(t)=0$. Existence and uniqueness theory for the solution of ODE provides the {\it linearly parallel translation} along the curve $c(t)$. For example, $\mathrm{P}^{\mathrm{l}}_{c;a,b}:T_pM\rightarrow T_qM$ from $p=c(a)$ to $q=c(b)$ is the linear isomorphism defined by $\mathrm{P}^{\mathrm{l}}_{c}(v)=X(b)$, in which $X(t)$ is the unique linearly parallel vector field along $c(t)$ satisfying $X(a)=v$.

There is also a {\it nonlinearly covariant derivative} along $c(t)$, i.e.,
$$\widetilde{D}_{\dot{c}(t)}X(t)=(\tfrac{{\rm d}}{{\rm d}t}X^i(t)+
\dot{c}^j(t)\mathbf{N}^i_j(c(t),X(t)))\partial_{x^i}|_{c(t)},$$
for any smooth vector field
$X(t)=X^i(t)\partial_{x^i}|_{c(t)}$ along $c(t)$ which is  nowhere-vanishing. Using $\widetilde{D}$ instead of $D$, {\it nonlinearly parallel vector field} and {\it nonlinearly parallel translation} can be similarly defined.

Another equivalent description for the nonlinearly parallel translation is the following. Suppose $c(t)=(c^i(t))$ is a smooth curve in a standard local coordinate. Denote $S=\cup_t (T_{c(t)}M\backslash\{0\})$ a smooth submanifold in $TM\backslash0$. The tangent vector field $\dot{c}(t)$ of $c(t)$ can be lifted to
the smooth vector field
\begin{equation}
\label{033}
\widetilde{\dot{c}(t)}^\mathcal{H}=
\dot{c}^i(t)\delta_{x^i}|_{T_{c(t)}M\backslash\{0\}}
\end{equation}
on $S$.
Then nonlinearly parallel vector fields along $c(t)$ are  integral curves of $\widetilde{\dot{c}(t)}^\mathcal{H}$. A nonlinearly parallel translation is denoted as $\mathrm{P}^{\mathrm{nl}}_{c;a,b}: T_pM\backslash\{0\}\rightarrow
T_qM\backslash\{0\}$ where $c(t)$ is a smooth  curve on $M$ with $c(a)=p$ and $c(b)=q$.

\subsection{Riemann curvature and S-curvature}
\label{subsection-2-3}
The {\it Riemann curvature} for a spray structure $\mathbf{G}$ is the linear map $\mathbf{R}_y:T_xM\rightarrow T_xM$ for any $y\in T_xM\backslash\{0\}$, which naturally appears in the Jacobi equation for a smooth variation of geodesics. In any standard local coordinate $(x^i,y^j)$, it can be presented as $\mathbf{R}_y=\mathbf{R}^i_k \partial_{x^i}\otimes dx^k$, with the Riemann curvature coefficients
$$\mathbf{R}^i_k=2\tfrac{\partial \mathbf{G}^i}{\partial x^k}
-y^j\tfrac{\partial^2 \mathbf{G}^i}{\partial x^j\partial y^k}
+2\mathbf{G}^j\tfrac{\partial^2\mathbf{G}^i}{\partial y^j\partial y^k}-\tfrac{\partial\mathbf{G}^i}{\partial y^j}\tfrac{\partial \mathbf{G}^j}{\partial y^k}.$$

Riemann curvature can be alternatively interpreted as following. We first decompose the tangent bundle $T(TM\backslash0)$ as the direct sum of two linear sub-bundles, the {\it horizonal distribution} $\mathcal{H}$ and the {\it vertical distribution}
$\mathcal{V}$, which are linearly spanned by all $\delta_{x^i}$ and by all $\partial_{y^j}$ respectively. For any fixed $x\in M$ and $y\in T_xM\backslash\{0\}$, a tangent vector
$v=a^i\partial_{x^i}\in T_{x}M$ can be one-to-one corresponded to its horizonal lifting $\widetilde{v}^\mathcal{H}=a^i\delta_{x^i}\in \mathcal{H}_{(x,y)}$, and its vertical lifting $\widetilde{v}^\mathcal{V}=a^i\partial_{y^i}\in \mathcal{V}_{(x,y)}$ respectively. The notions $\mathcal{H}$, $\mathcal{V}$, $\widetilde{\cdot}^\mathcal{H}$ and $\widetilde{\cdot}^\mathcal{V}$ are all irrelevant to the choice of standard local coordinate.

Direct calculation shows that
$$[\mathbf{G},\delta_{x^k}]\equiv \mathbf{R}_k^i\partial_{y^i}\quad \mbox{(mod }\mathcal{H}\mbox{)},$$
and more generally,
\begin{equation}\label{048}
[\mathbf{G},a^k(x)\delta_{x^k}]\equiv a^k(x)\mathbf{R}_k^i\partial_{y^i}\quad \mbox{(mod }\mathcal{H}\mbox{)}.
\end{equation}
Using the horizonal and vertical liftings, (\ref{048}) can be interpreted as the following lemma.

\begin{lemma}\label{lemma-2-1}
For any smooth vector field $X$ on $M$, we have
$$[\mathbf{G},\widetilde{X}^{\mathcal{H}}]|_{(x,y)}
=\widetilde{\mathbf{R}_y(X(x))}^{\mathcal{V}}
\quad\mbox{(mod }\mathcal{H}\mbox{)},$$
at each point $(x,y)\in T_xM\backslash\{0\}$.
\end{lemma}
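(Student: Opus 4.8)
The plan is to reduce the invariant identity to the local congruence (\ref{048}), which is already established by direct calculation before the statement, and then to match each side of (\ref{048}) with the intrinsic objects appearing in the lemma. First I would fix a standard local coordinate $(x^i,y^j)$ around the chosen point of $M$ and write $X=a^k\partial_{x^k}$ with $a^k=a^k(x)$ smooth functions on the coordinate neighborhood. By the definition of the horizonal lifting recalled in Section \ref{subsection-2-3}, its horizonal lift is $\widetilde{X}^{\mathcal{H}}=a^k(x)\delta_{x^k}$, where $a^k$ is pulled back to $TM\backslash0$ along the bundle projection. Applying (\ref{048}) then gives
$$[\mathbf{G},\widetilde{X}^{\mathcal{H}}]\equiv a^k(x)\mathbf{R}^i_k\partial_{y^i}\quad(\mbox{mod }\mathcal{H}).$$

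Next I would match the right-hand side with the vertical lift. By the definition of the Riemann curvature operator $\mathbf{R}_y$, for $v=v^k\partial_{x^k}\in T_xM$ one has $\mathbf{R}_y(v)=v^k\mathbf{R}^i_k\partial_{x^i}$, so with $v=X(x)=a^k(x)\partial_{x^k}$ we obtain $\mathbf{R}_y(X(x))=a^k(x)\mathbf{R}^i_k(x,y)\partial_{x^i}$. Its vertical lift at $(x,y)$ is exactly $\widetilde{\mathbf{R}_y(X(x))}^{\mathcal{V}}=a^k(x)\mathbf{R}^i_k(x,y)\partial_{y^i}$, which coincides with the right-hand side of the displayed congruence evaluated at $(x,y)$. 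This establishes the claimed identity in the chosen coordinate.

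Finally, I would observe that the identity is coordinate-free: the Lie bracket, the distributions $\mathcal{H}$ and $\mathcal{V}$, the horizonal and vertical liftings, and the operator $\mathbf{R}_y$ were all noted in Section \ref{subsection-2-3} to be independent of the choice of standard local coordinate, so the congruence modulo $\mathcal{H}$ is intrinsically well defined and, being verified in one coordinate chart, holds at every point. I expect no genuine obstacle here, since (\ref{048}) does the heavy lifting; the only point requiring care is the bookkeeping that passes from the local expression $a^k(x)\mathbf{R}^i_k\partial_{y^i}$ to the invariant vertical lift, together with checking that reduction modulo $\mathcal{H}$ identifies $T(TM\backslash0)/\mathcal{H}$ canonically with $\mathcal{V}$ so that the statement ``$\equiv\cdots\ (\mbox{mod }\mathcal{H})$'' is unambiguous. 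Were one instead to avoid citing (\ref{048}) and argue from scratch, the real work would be the direct computation of $[\mathbf{G},a^k\delta_{x^k}]$, expanding $\mathbf{G}=y^i\partial_{x^i}-2\mathbf{G}^i\partial_{y^i}$ and $\delta_{x^k}=\partial_{x^k}-\mathbf{N}^j_k\partial_{y^j}$, collecting the vertical components and recognizing the Riemann curvature coefficients $\mathbf{R}^i_k$; that is precisely the calculation summarized just before the lemma.
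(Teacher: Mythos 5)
Your proposal is correct and follows exactly the route the paper intends: the paper offers no separate proof of Lemma \ref{lemma-2-1} beyond asserting the local congruence (\ref{048}) by direct calculation and then remarking that the lemma is its interpretation via the horizonal and vertical liftings, which is precisely the bookkeeping you carry out. Your added remarks on coordinate independence and on the identification of $T(TM\backslash0)/\mathcal{H}$ with $\mathcal{V}$ are sound and merely make explicit what the paper leaves implicit.
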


To define the S-curvature, we need to specify a smooth measure ${\rm d}\mu$ on $M$. Suppose ${\rm d}\mu$ can be presented as
${\rm d}\mu=\sigma(x){\rm d}x^1\cdots {\rm d}x^n$
in a standard local coordinate, in which $\sigma(x)$ is a nowhere-vanishing smooth function $\sigma(x)$.
Then the
S-curvature for $\mathbf{G}$ and ${\rm d}\mu$ is a smooth function $\mathbf{S}:TM\backslash0\rightarrow\mathbb{R}$, satisfying
\begin{equation}\label{020}
\mathbf{S}(x,y)=\mathbf{N}^i_i(x,y)-\tfrac{y^i}{\sigma(x)}
\tfrac{\partial}{\partial x^i}\sigma(x),
\end{equation}
for any standard local coordinate. The equality (\ref{020})
can be interpreted as following.

\begin{lemma}\label{lemma-2-2}
Let $c(t)$ be a geodesic on $(M,\mathbf{G})$ with $c(0)=x$ and $\dot{c}(0)=y\in T_xM\backslash\{0\}$, $\{E_1(t),\cdots,E_n(t)\}$ a smooth frame along $c(t)$ such that ${\rm d}\mu(E_1(t),\cdots,E_n(t))$ is a nonzero constant function. Denote $A$ the linear map from any $w=(v^1,\cdots,v^n)\in\mathbb{R}^n$ to
$-\tfrac{{\rm d}}{{\rm d}t}|_{t=0}v(t)$, where $v(t)=(v^1(t),\cdot,v^n(t))$ is determined by the linearly parallel vector field $V(t)=v^i(t)E_i(t)$ along $c(t)$ with $V(0)=v^iE_i(0)$. Then for $\mathbf{G}$ and ${\rm d}\mu$, the value of the S-curvature at $(x,y)$  coincides with $\mathrm{Tr}_\mathbb{R}A$.
\end{lemma}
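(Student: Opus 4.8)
The plan is to fix a standard local coordinate $(x^i,y^j)$ around $x$, to write down the matrix of $A$ explicitly, and then to take its trace. Expand the frame in coordinates as $E_i(t)=E_i^j(t)\partial_{x^j}|_{c(t)}$ and collect the components into the matrix $E(t)=(E_i^j(t))$, where $j$ is the row index and $i$ the column index; this matrix is invertible near $t=0$ since $E_i(0)$ is a basis. For initial data $w=(v^1,\dots,v^n)$ the parallel field $V(t)=v^i(t)E_i(t)$ has coordinate components $V^j(t)=E_i^j(t)v^i(t)$, i.e. $V=Ev$ in matrix notation, and the linear parallel equation $D_{\dot c}V=0$ becomes $\dot V=-\mathbf{N}V$ with $\mathbf{N}(t)=(\mathbf{N}^j_k(c(t),\dot c(t)))$. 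Differentiating $V=Ev$ and solving for $\dot v$ gives
$$\dot v=-\big(E^{-1}\mathbf{N}E+E^{-1}\dot E\big)v,$$
so that, since $v(0)=w$ and $A(w)=-\dot v(0)$, the matrix of $A$ is $\big(E^{-1}\mathbf{N}E+E^{-1}\dot E\big)\big|_{t=0}$.

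Taking the trace and using the cyclic invariance $\mathrm{Tr}_\mathbb{R}(E^{-1}\mathbf{N}E)=\mathrm{Tr}_\mathbb{R}\mathbf{N}$, together with $c(0)=x$ and $\dot c(0)=y$, I obtain
$$\mathrm{Tr}_\mathbb{R}A=\mathbf{N}^i_i(x,y)+\mathrm{Tr}_\mathbb{R}\big(E^{-1}\dot E\big)\big|_{t=0}.$$
The first term is already the first summand of the S-curvature formula (\ref{020}), so the proof reduces to identifying the second term with $-\tfrac{y^i}{\sigma(x)}\tfrac{\partial\sigma}{\partial x^i}(x)$.

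This is where the one hypothesis not yet used enters, namely that $d\mu(E_1(t),\dots,E_n(t))$ is a nonzero constant. Writing $d\mu=\sigma(x)\,dx^1\cdots dx^n$, this quantity equals $\sigma(c(t))\det E(t)$. Differentiating the constant $\sigma(c(t))\det E(t)$ and applying Jacobi's formula $\tfrac{d}{dt}\det E=\det E\cdot\mathrm{Tr}_\mathbb{R}(E^{-1}\dot E)$ gives, at $t=0$,
$$\tfrac{\partial\sigma}{\partial x^i}(x)\,y^i\,\det E(0)+\sigma(x)\det E(0)\,\mathrm{Tr}_\mathbb{R}(E^{-1}\dot E)\big|_{t=0}=0.$$
Dividing by the nonzero factor $\sigma(x)\det E(0)$ yields $\mathrm{Tr}_\mathbb{R}(E^{-1}\dot E)|_{t=0}=-\tfrac{y^i}{\sigma(x)}\tfrac{\partial\sigma}{\partial x^i}(x)$, and substituting into the previous display reproduces (\ref{020}), i.e. $\mathrm{Tr}_\mathbb{R}A=\mathbf{S}(x,y)$.

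I do not expect a serious obstacle: the argument is a direct matrix computation once the parallel equation is rewritten as $\dot V=-\mathbf{N}V$. The only points needing care are the bookkeeping of frame-versus-coordinate indices when passing between $V=Ev$ and its derivative, and the recognition that the frame-dependent contribution to $\mathrm{Tr}_\mathbb{R}A$ is precisely the logarithmic derivative of $\det E$, which the measure-normalization hypothesis pins down; invertibility of $E(t)$ near $t=0$ is automatic because $\sigma\det E$ is a nonzero constant.
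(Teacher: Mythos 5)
Your proof is correct and follows essentially the same route as the paper: both compute the matrix of $A$ in a standard local coordinate by relating the frame components $v^i(t)$ to the coordinate components of the parallel field via the frame matrix, take the trace using cyclic invariance, and identify the frame-dependent term $\mathrm{Tr}_\mathbb{R}(E^{-1}\dot E)|_{t=0}$ with $-\tfrac{y^i}{\sigma(x)}\tfrac{\partial\sigma}{\partial x^i}(x)$ from the constancy of $\sigma(c(t))\det E(t)$. The only cosmetic difference is that the paper works directly with the inverse matrix $B=E^{-1}$ and the relation $v=Bw$, whereas you solve $V=Ev$ for $\dot v$; the computations coincide.
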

\begin{proof}
Choose any standard local coordinate $(x^i,y^j)$ around $x$. Denote $E_i(t)=A_i^j(t)\partial_{x^j}$ and $\partial_{x^j}=B_j^i(t) E_i(t)$ (so we have $(B_i^j(t))=(A_i^j(t))^{-1}$, i.e., $A_i^j(t)B^k_j(t)=\delta^k_i$). Since we have assumed that ${\rm d}\mu(E_1(t),\cdots,E_n(t))=\sigma(c(t))\det(A_i^j(t))$
is a nonzero constant function, the term $\tfrac{y^i}{\sigma(x)}\tfrac{\partial}{\partial x^i}\sigma(x)$
in (\ref{020}) equals
\begin{eqnarray*}
\tfrac{y^i}{\sigma(x)}\tfrac{\partial}{\partial x^i}\sigma(x)
=\det(A^i_j(0))\tfrac{{\rm d}}{{\rm d}t}|_{t=0}\det(B^i_j(t))=
A_i^j(0)\tfrac{{\rm d}}{{\rm d}t}B_j^i(0).
\end{eqnarray*}
For any linearly parallel vector field $W(t)=w^i(t)\partial_{x^i}|_{c(t)}=v^i(t)E_i(t)$ along $c(t)$,
we have $v^i(t)=B^i_j(t)w^j(t)$, so
\begin{eqnarray*}
\tfrac{{\rm d}}{{\rm d}t}v^i(t)&=&w^j(t)\tfrac{{\rm d}}{{\rm d}t}B^i_j(t) +B^i_j(t)\tfrac{{\rm d}}{{\rm d}t}w^j(t)\\
&=&w^j(t)\tfrac{{\rm d}}{{\rm d}t}B^i_j(t)-
B^i_j(t)\mathbf{N}^j_k(c(t),\dot{c}(t)) w^k(t)\\&=&(A^j_l(t)\tfrac{{\rm d}}{{\rm d}t}B^i_j(t)-
B^i_j(t)\mathbf{N}^j_k(c(t),\dot{c}(t)) A^k_l(t))v^l(t).
\end{eqnarray*}
So we have
\begin{eqnarray*}
\mathrm{Tr}_\mathbb{R}A&=&-A^j_i(0)\tfrac{{\rm d}}{{\rm d}t}B^i_j(0)+
B^i_j(0)\mathbf{N}^j_k(c(0),\dot{c}(0))A^k_i(0)\\
&=&\mathbf{N}^i_i(x,y)-\tfrac{y^i}{\sigma(x)}\tfrac{\partial}{\partial x^i}\sigma(x)=S(x,y),
\end{eqnarray*}
which ends the proof.
\end{proof}
\subsection{Finsler metric and its curvatures}
\label{subsection-2-4}
A {\it Finsler metric} on a smooth manifold $M^n$ is a continuous Function $F:TM\rightarrow[0,\infty)$ satisfying the following properties:
\begin{enumerate}
\item positiveness and smoothness, i.e., the restriction of $F$ to the slit tangent bundle $TM\backslash0$ is a positive smooth function;
\item positive 1-homogeneity, i.e., $F(x,\lambda y)=\lambda F(x,y)$ for any $x\in M$, $y\in T_xM$ and $\lambda\geq0$;
\item strong convexity, i.e., with respect to any standard local coordinate, the Hessian matrices $(g_{ij}(x,y))=([\tfrac12F^2]_{y^iy^j})$ is positive definite
    for any $x\in M$ and $y\in T_xM\backslash\{0\}$.
\end{enumerate}

The restriction of a Finsler metric $F$ to each tangent space is a {\it Minkowski norm}. Minkowski norm can be abstractly defined
on any finite dimensional real vector space, using similar requirements as (1)-(3) above.
The Hessian matrix $(g_{ij})$ in (3) is called the fundamental tensor for a Finsler metric or a Minkowski norm. We use it and its inverse matrix $(g^{ij})$ to move indices up and down. The fundamental tensor $(g_{ij})=(g_{ij}(x,y))$ induces inner products $g_y(\cdot,\cdot)$ on $T_xM$ parametrized by $y\in T_xM\backslash\{0\}$, i.e.,
\begin{equation}
g_y(u,v)=\tfrac12\tfrac{\partial^2}{\partial s\partial t}|_{s=t=0}F^2(y+su+tv)=u^iv^j g_{ij}(x,y),\label{060}
\end{equation}
for any $u=u^i\partial_{x^i}$ and $v=v^j\partial_{x^j}$ in $T_xM$.

The Finsler metric $F$ induces a spray structure, called the {\it geodesic spray} of $F$, with the spray coefficients $\mathbf{G}^i=\tfrac14g^{il}([F^2]_{x^ky^l}y^k-[F^2]_{x^l})$
in any standard local coordinate.
So all geometric notions in spray geometry, parallel translations, S-curvature, Riemann curvature, etc, are naturally inherited by Finsler geometry.

The Finsler metric $F$ can provide more curvature notions than its geodesic spray $\mathbf{G}$. For example, for any $y\in T_xM\backslash\{0\}$,
the Cartan tensor $\mathbf{C}_y:T_xM\times T_xM\times T_xM\rightarrow\mathbb{R}$ for $F$ is
$$\mathbf{C}_y(u,v,w)=\mathbf{C}_{ijk}u^i v^j w^k,\quad  \forall u=u^i\partial_{x^i}, v=v^j\partial_{x^j}, w=w^k\partial_{x^k}\in T_xM,$$
in which $\mathbf{C}_{ijk}=[\tfrac14F^2(x,y)]_{y^iy^jy^k}$, and
the Landsberg curvature
$\mathbf{L}_y:T_xM\times T_xM\times T_xM\rightarrow\mathbb{R}$ can be determined by
\begin{equation}\label{071}
\mathbf{L}_{\dot{c}(t)}(U(t),V(t),W(t))=
\tfrac{{\rm d}}{{\rm d}t}\mathbf{C}_{\dot{c}(t)}(U(t),V(t),W(t)),
\end{equation}
in which $U(t)$, $V(t)$ and $W(t)$ are linearly parallel vector fields along the geodesic $c(t)$.

See \cite{Sh2001-2} for more Riemannian and non-Riemannian curvatures in Finsler geometry.
\section{Submersion and submersion technique in spray geometry}

In this section, we define a submersion between two spray structures and prove some lemmas for the submersion technique in spray geometry.

\subsection{Submersion between two spray structure}
\label{subsection-3-1}

Let $\pi:\overline{M}^m\rightarrow M^n$ be a smooth submersion between two smooth manifolds, with $\dim\overline{M}=m\geq\dim M=n$. It can be locally presented as $\pi(x,z)=x$ by choosing suitable local coordinates \begin{equation}\label{030}
x=(x^1,\cdots,x^n)\in M\quad \mbox{and}\quad
(x,z)=(x^1,\cdots,x^n,z^{n+1},\cdots,z^m)\in\overline{M}.
\end{equation}

Suppose $\mathcal{L}=\cup_{(x,z)\in\overline{M}}\mathcal{L}_{(x,z)}$ is a distribution on $\overline{M}$, such that each fiber
$\mathcal{L}_{(x,z)}$ is a linear complement of
$\mathrm{ker}(\pi_*|_{(x,z)})$ in $ T_{(x,z)}\overline{M}$. Then $\mathcal{L}$ and $\mathcal{L}\backslash0=\cup_{(x,z)\in \overline{M}}(\mathcal{L}_{(x,z)}\backslash\{0\})$ are closed submanifolds of $T\overline{M}$ and $T\overline{M}\backslash0$ respectively.

Consider two spray structures, $\overline{\mathbf{G}}$ on $\overline{M}$ and $\mathbf{G}$ on $M$ respectively. We
say $(\pi,\mathcal{L})$ is a {\it submersion between the spray structures} $\overline{\mathbf{G}}$ and $\mathbf{G}$, if the following two conditions are satisfied:
\begin{enumerate}
\item $\overline{\mathbf{G}}|_{\mathcal{L}\backslash0}$  is  tangent to $\mathcal{L}\backslash0$ everywhere;
\item for any geodesic $\overline{c}(t)$ on $(\overline{M},\overline{\mathbf{G}})$, which lifting $(\overline{c}(t),\dot{\overline{c}}(t))$ is contained in $\mathcal{L}\backslash0$, $c(t)=\pi(\overline{c}(t))$ is
    a geodesic on $(M,\mathbf{G})$.
\end{enumerate}

\subsection{Standard local coordinate representations}
\label{subsection-3-2}

In this section, the following convention for indices is applied,
\begin{equation}\label{067}
1\leq i,j,k,l,p,q,r\leq n,\quad
n+1\leq \alpha,\beta,\gamma\leq m.
\end{equation}
For a submersion $(\pi,\mathcal{L})$ between $(\overline{M},\overline{\mathbf{G}})$ and $(M,\mathbf{G})$, we use bars to distinguish notations for $M$ (or $TM$) and those for $\overline{M}$ (or $T\overline{M}$ respectively). For example, the tangent vector fields in the frame for
a standard local coordinate are denoted as $\overline{\partial}_{x^i}$, $\overline{\partial}_{z^\alpha}$, etc, on $\overline{M}$ or $T\overline{M}$, and denoted as $\partial_{x^i}$ and $\partial_{y^i}$ on $M$ or $TM$.
For a spray structure $\overline{\mathbf{G}}$ on $\overline{M}$, we have
$\overline{\mathbf{G}}^i$ and $\overline{\mathbf{G}}^\alpha$ for its coefficients,
$\overline{\mathbf{N}}^i_j=\tfrac{\partial}{\partial y^j}\overline{\mathbf{G}}^i$, $\cdots$, $\overline{\mathbf{N}}^\alpha_\beta=\tfrac{\partial}{\partial w^\beta}\overline{\mathbf{G}}^\alpha$, $\overline{\mathcal{H}}$ and $\overline{\mathcal{V}}$ for its horizonal and vertical distributions respectively, $\widetilde{\overline{v}}^{\overline{\mathcal{H}}}$ and
$\widetilde{\overline{v}}^{\overline{\mathcal{V}}}$ for
the horizonal and vertical liftings of $\overline{v}\in T\overline{M}$ respectively,
etc.

We expand the local coordinates in (\ref{030}) to
standard local coordinates $(x,y)=(x^i,y^j)\in TM$ and
$(x,z,y,w)=(x^i,z^\alpha,y^j,w^\beta)\in T\overline{M}$ respectively, i.e.,
\begin{eqnarray*}
y=y^i\partial_{x^i}\in T_xM\ \ \mbox{or}\ \ y=y^i\overline{\partial}_{x^i}\in T_{(x,z)}\overline{M},\quad\mbox{and}\quad
w=w^\alpha\overline{\partial}_{z^\alpha}\in T_{(x,z)}\overline{M}.
\end{eqnarray*}

Using above standard local coordinates,
the submersion $(\pi,\mathcal{L})$ from $(\overline{M},\overline{\mathbf{G}})$ to $(M,\mathbf{G})$
can be presented as following.

Firstly, we have $\pi(x,z)=x$. So its tangent map $\pi_*:T\overline{M}\rightarrow TM$ can be presented as $\pi_*(x,z,y,w)=(x,y)$. The tangent map of $\pi_*$, i.e.,
$(\pi_*)_*:T(T\overline{M})\rightarrow T(TM)$, satisfies
$$(\pi_*)_*(\overline{\partial}_{x^i})=\partial_{x^i},\quad
(\pi_*)_*(\overline{\partial}_{y^j})=\partial_{y^j},\quad
\mbox{and}\quad
(\pi_*)_*(\overline{\partial}_{z^\alpha})=
(\pi_*)_*(\overline{\partial}_{w^\beta})=0.$$

Nextly,
the spray structures $\mathbf{G}$ and $\overline{\mathbf{G}}$ have the representations
\begin{eqnarray}
\mathbf{G}&=&y^i\partial_{x^i}-2\mathbf{G}^i\partial_{y^i}
\quad\mbox{and}\label{041}\\
\overline{\mathbf{G}}&=&
y^i\overline{\partial}_{x^i}+w^\alpha\overline{\partial}_{z^\alpha}
-2\overline{\mathbf{G}}^i\overline{\partial}_{y^i}
-2\overline{\mathbf{G}}^\alpha\overline{\partial}_{w^\alpha}\label{042}
\end{eqnarray}
respectively.

Finally, $\mathcal{L}$ can be presented as
the graph of $w=l(x,z,y)$, in which
$w^\beta=l^\beta(x,z,y)$
for each $\beta$ is a smooth function and it is homogeneously linear for its $y$-entry.

\begin{lemma}\label{lemma-3-1}
For the submersion $(\pi,\mathcal{L})$ between
$(\overline{M},\overline{\mathbf{G}})$ and $(M,\mathbf{G})$,
we have
\begin{enumerate}
\item $\overline{\mathbf{G}}^j(x,z,y,l(x,z,y))
=\mathbf{G}^j(x,y)$, $\forall j$, and
\item $\overline{\mathbf{N}}^j_i(x,z,y,l(x,z,y))+
\tfrac{\partial}{\partial y^i}l^\alpha(x,z)\overline{\mathbf{N}}^j_\alpha(x,z,y,l(x,z,y))
=\mathbf{N}^j_i(x,y)$, $\forall i,j$,
\end{enumerate}
at each $(x,z,y,l(x,z,y))\in \mathcal{L}\backslash0$ (i.e.,  $y\neq0$).
\end{lemma}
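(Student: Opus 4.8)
The plan is to deduce part (1) directly from the definition of the submersion in Section~\ref{subsection-3-1}, and then to obtain part (2) simply by differentiating the identity of part (1) with respect to $y^i$.

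For part (1), I would fix an arbitrary point $(x_0,z_0,y_0,l(x_0,z_0,y_0))\in\mathcal{L}\backslash0$ with $y_0\neq0$. By the existence theory for ODEs there is a unique integral curve of $\overline{\mathbf{G}}$ through this point, which is the lifting $(\overline{c}(t),\dot{\overline{c}}(t))$ of a geodesic $\overline{c}(t)=(x(t),z(t))$ on $(\overline{M},\overline{\mathbf{G}})$ with $\overline{c}(0)=(x_0,z_0)$ and $\dot{\overline{c}}(0)=(y_0,l(x_0,z_0,y_0))$. Because $\overline{\mathbf{G}}|_{\mathcal{L}\backslash0}$ is tangent to $\mathcal{L}\backslash0$ (condition (1) of the submersion), this integral curve stays inside $\mathcal{L}\backslash0$ for $t$ near $0$; equivalently $\dot{z}^\alpha(t)=l^\alpha(x(t),z(t),\dot{x}(t))$ for all $\alpha$. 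Condition (2) of the submersion then guarantees that $c(t)=\pi(\overline{c}(t))=x(t)$ is a geodesic on $(M,\mathbf{G})$. Reading off the $x^j$-components of the two geodesic ODEs via the representations (\ref{041}) and (\ref{042}), the single curve $x(t)$ satisfies both $\ddot{x}^j(t)+2\overline{\mathbf{G}}^j(x(t),z(t),\dot{x}(t),l(x(t),z(t),\dot{x}(t)))=0$ and $\ddot{x}^j(t)+2\mathbf{G}^j(x(t),\dot{x}(t))=0$. Since $\ddot{x}^j(t)$ is literally the same in both, subtracting gives $\overline{\mathbf{G}}^j(x(t),z(t),\dot{x}(t),l(x(t),z(t),\dot{x}(t)))=\mathbf{G}^j(x(t),\dot{x}(t))$, and evaluation at $t=0$ proves the identity at the chosen point. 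As the point ranges over all of $\mathcal{L}\backslash0$ (an open condition in $y$), this is an identity of smooth functions on $\{y\neq0\}$.

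For part (2), I would differentiate the identity $\overline{\mathbf{G}}^j(x,z,y,l(x,z,y))=\mathbf{G}^j(x,y)$ of part (1) with respect to $y^i$. By the chain rule, recalling that $l^\alpha$ depends on $y$, the left side produces $\tfrac{\partial\overline{\mathbf{G}}^j}{\partial y^i}+\tfrac{\partial l^\alpha}{\partial y^i}\tfrac{\partial\overline{\mathbf{G}}^j}{\partial w^\alpha}$ (summed over $\alpha$), both derivatives evaluated at $(x,z,y,l(x,z,y))$, while the right side produces $\tfrac{\partial\mathbf{G}^j}{\partial y^i}$. Recognizing $\tfrac{\partial\overline{\mathbf{G}}^j}{\partial y^i}=\overline{\mathbf{N}}^j_i$, $\tfrac{\partial\overline{\mathbf{G}}^j}{\partial w^\alpha}=\overline{\mathbf{N}}^j_\alpha$ and $\tfrac{\partial\mathbf{G}^j}{\partial y^i}=\mathbf{N}^j_i$ from the definitions in Section~\ref{subsection-2-2} yields exactly the stated formula.

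The only delicate point is the tangency/invariance step in part (1): one must be sure that the geodesic through a prescribed initial lifting in $\mathcal{L}\backslash0$ stays tangent to $\mathcal{L}$, so that condition (2) of the submersion is applicable. This is precisely where condition (1) is essential, together with the elementary fact that an integral curve of a vector field tangent to a submanifold cannot leave that submanifold. Everything else — the comparison of the two geodesic equations and the single differentiation for part (2) — is routine, so I do not expect any serious obstacle beyond bookkeeping of the chain rule and the summation over $\alpha$.
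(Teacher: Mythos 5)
Your proposal is correct and follows essentially the same route as the paper: take a geodesic of $\overline{\mathbf{G}}$ whose lifting lies in $\mathcal{L}\backslash0$ through an arbitrary point (which exists by the tangency condition (1) of the submersion), compare the $x^j$-components of the two geodesic ODEs using condition (2) to get part (1), and differentiate in $y^i$ for part (2). Your explicit justification that the integral curve stays in $\mathcal{L}\backslash0$ is a point the paper leaves implicit, but it is the same argument.
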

\begin{proof}
Let $\overline{c}(t)=(x(t),z(t))=(x^i(t),z^\alpha(t))$ with $t\in(-\epsilon,\epsilon)$
be a geodesic on
$(\overline{M},\overline{\mathbf{G}})$, such that its lifting
$(x(t),z(t),\dot{x}(t),\dot{z}(t))$ is contained in $\mathcal{L}$. That means
\begin{eqnarray}
& &\dot{z}^\alpha(t)=l^\alpha(x(t),z(t),\dot{x}(t)),\quad\forall\alpha,\label{002}
\\
& &\ddot{x}^j(t)+
2\overline{\mathbf{G}}^j(x(t),z(t),\dot{x}(t),\dot{z}(t))=0,
\quad\forall i,\label{003}\\
& &\ddot{z}^\alpha(t)+2
\overline{\mathbf{G}}^\alpha(x(t),z(t),\dot{x}(t),\dot{z}(t))=0,
\quad\forall\alpha.\label{004}
\end{eqnarray}

Since $c(t)=\pi(\overline{c}(t))=x(t)$ is a geodesic on $(M,\mathbf{G})$,
we have
\begin{equation}
\ddot{x}^j(t)+2\mathbf{G}^j(x(t),\dot{x}(t))=0.\label{005}
\end{equation}
Comparing (\ref{003}) and (\ref{005}), we get
\begin{equation}\label{006}
\overline{\mathbf{G}}^j(x(t),z(t),\dot{x}(t),l(x(t),z(t),\dot{x}(t)))
=\mathbf{G}^j(x(t),\dot{x}(t)).
\end{equation}

Since $x=x(0)$, $z=z(0)$ and $y=\dot{x}(0)$ can be chosen arbitrarily, (1)
in Lemma \ref{lemma-3-1} follows immediately after (\ref{006}).
Differentiate the equality in (1) of  Lemma \ref{lemma-3-1} with respect to $y^i$, then we get (2) in Lemma \ref{lemma-3-1}.
\end{proof}

\begin{remark}\label{remark-3-2}
Because
each $l^\alpha(x,z,y)$ depends linearly on its $y$-entry,  $\tfrac{\partial}{\partial y^i}l^\alpha$ only depends on $x$ and $z$. So we may denote $\tfrac{\partial}{\partial y^i}l^\alpha=
\tfrac{\partial}{\partial y^i}l^\alpha(x,z)$.
As a byproduct of the proof for Lemma \ref{lemma-3-1}, we can also get
\begin{eqnarray}
\overline{\mathbf{G}}^\alpha(x,z,y,l(x,z,y))&=&
-\tfrac12
y^i\tfrac{\partial}{\partial x^i}l^\alpha(x,z,y)
-\tfrac12l^\beta(x ,z ,y)\tfrac{\partial}{\partial z^\beta}l^\alpha(x ,z ,y))\nonumber\\
& &+\tfrac{\partial}{\partial y^i}
l^\alpha(x,z )\mathbf{G}^i(x ,y),\quad\forall \alpha,\label{007}
\end{eqnarray}
by differentiating (\ref{002}) with respect to $t$ and then Plugging
(\ref{002})-(\ref{005}) into it.
\end{remark}

\subsection{Some bundle maps induced by $\mathcal{L}$}
\label{subsection-3-3}

For the submersion $(\pi,\mathcal{L}):(\overline{M},\overline{\mathbf{G}})\rightarrow
(M,\mathbf{G})$, we have the following
smooth bundle maps, which are distinguished by their  superscripts
and subscripts, marking the domain and target bundles respectively:
\begin{enumerate}
\item $\Phi^{T\overline{M}}_{\mathcal{L}}:
    T\overline{M}\rightarrow\mathcal{L}$ with
    $(x,z,y,w)\mapsto (x,z,y,l(x,z,y))$,
\item $\Phi^{\mathcal{L}}_{TM}=
    \pi_*|_{\mathcal{L}}:\mathcal{L}\rightarrow TM$ with $(x,z,y,l(x,z,y))\mapsto (x,y)$, and
\item $\Phi^{T(T\overline{M})|_{\mathcal{L}}}_{T\mathcal{L}}:
T(T\overline{M})|_{\mathcal{L}}\rightarrow T\mathcal{L}$
is the restriction of the tangent map $(\Phi^{T\overline{M}}_\mathcal{L})_*: T(T\overline{M})\rightarrow T\mathcal{L}$ to $T(T\overline{M})|_{\mathcal{L}}$.
\end{enumerate}

Standard local coordinates are applied here to present these bundle maps, for the convenience in calculation. But we need to notice that
these bundle maps are in fact canonically defined from the submersion, i.e., local coordinate representations are inessential. This observation enable us to re-interpret these bundle maps in Section 6 by
global invariant frame.

For example, the tangent map of $\Phi^{T\overline{M}}_{\mathcal{L}}(x,z,y,w)=(x,z,y,l(x,z,y))$
provides
\begin{eqnarray}
&&\Phi^{T(T\overline{M})|_{\mathcal{L}}}_{T\mathcal{L}} (a^i\overline{\partial}_{x^i}+b^j\overline{\partial}_{y^j}
+c^\alpha
\overline{\partial}_{z^\alpha}+
d^\beta\overline{\partial}_{w^\beta})\nonumber\\
&=&a^i(\overline{\partial}_{x^i}+
\tfrac{\partial}{\partial x^i}l^\beta(x,z,y)\overline{\partial}_{w^\beta})+b^j
(\overline{\partial}_{y^j}+
\tfrac{\partial}{\partial y^j}l^\beta(x,z)\overline{\partial}_{w^\beta}+c^\alpha
(\overline{\partial}_{z^\alpha}+
\tfrac{\partial}{\partial z^\alpha}l^\beta(x,z,y)\overline{\partial}_{w^\beta})\nonumber\\
&=&
a^i\overline{\partial}_{x^i}+b^j\overline{\partial}_{y^j}
+c^\alpha\overline{\partial}_{z^\alpha}\quad
\mbox{(mod }\overline{\partial}_{w^\beta},\forall\beta\mbox{)}.\label{043}
\end{eqnarray}
at each $\overline{p}=(x,z,y,l(x,z,y))\in\mathcal{L}$. On the other hand, both $\Phi^{T\overline{M}}_{\mathcal{L}}$ and
$\Phi^{T(T\overline{M})}_{T\mathcal{L}}$ can be described by the following easy lemma, without using the local coordinates.
 \begin{lemma}\label{lemma-3-3}
 Denote $\psi:T\overline{M}\rightarrow\overline{M}$
the bundle map for $T\overline{M}$ over $\overline{M}$, i.e., $\psi(x,z,y,w)=(x,z)$. Then we have
\begin{enumerate}
\item at each $(x,z)\in\overline{M}$, $\Phi^{T\overline{M}}_{\mathcal{L}}|_{(x,z)}:
    T_{(x,z)}\overline{M}\rightarrow\mathcal{L}_{(x,z)}$
    is the linear projection with the kernel $\mathrm{span}=\{\overline{\partial}_{z^\alpha},
    \forall\alpha\}=\ker\pi_*\cap
    T_{(x,z)}\overline{M}$;
    \item at each $\overline{p}\in\mathcal{L}$,
$\Phi^{T(T\overline{M})|_{\mathcal{L}}}_{T\mathcal{L}}
|_{\overline{p}}:T_{\overline{p}}(T\overline{M})\rightarrow
T_{\overline{p}}\mathcal{L}$ is the linear projection with the kernel
$\mathrm{span}\{\overline{\partial}_{w^\beta},
\forall\beta\}=\ker(\pi_*)_*\cap
\ker\psi_*$.
\end{enumerate}
\end{lemma}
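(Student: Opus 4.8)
The unifying observation behind both parts is that $\Phi^{T\overline{M}}_{\mathcal{L}}$ is a smooth \emph{retraction} of the total space $T\overline{M}$ onto its submanifold $\mathcal{L}$: it maps $T\overline{M}$ into $\mathcal{L}$, and a point of $\mathcal{L}$ already has the shape $(x,z,y,l(x,z,y))$, so applying $\Phi^{T\overline{M}}_{\mathcal{L}}$ to it returns the same point. Hence $\Phi^{T\overline{M}}_{\mathcal{L}}\circ\Phi^{T\overline{M}}_{\mathcal{L}}=\Phi^{T\overline{M}}_{\mathcal{L}}$ and $\Phi^{T\overline{M}}_{\mathcal{L}}|_{\mathcal{L}}=\mathrm{id}_{\mathcal{L}}$. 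Both idempotency facts will be used: part (1) fiberwise, part (2) after differentiating.

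For part (1), I would fix $(x,z)$ and examine the restriction to the single fiber $T_{(x,z)}\overline{M}$, which in coordinates reads $(y,w)\mapsto(y,l(x,z,y))$. Because each $l^\beta(x,z,y)$ is homogeneously linear in $y$, so that $l^\beta(x,z,y)=\tfrac{\partial}{\partial y^j}l^\beta(x,z)\,y^j$ with coefficients depending only on $(x,z)$ by Remark \ref{remark-3-2}, this map is linear in the fiber variables and independent of $w$; its image is exactly $\mathcal{L}_{(x,z)}$, and the retraction property forces it to be the identity on $\mathcal{L}_{(x,z)}$, hence a linear projection. Its kernel consists of the $y^i\overline{\partial}_{x^i}+w^\alpha\overline{\partial}_{z^\alpha}$ with $y=0$ (since $l(x,z,0)=0$), i.e. $\mathrm{span}\{\overline{\partial}_{z^\alpha},\forall\alpha\}$; comparing with $\pi_*(x,z,y,w)=(x,y)$ then identifies this with $\ker\pi_*\cap T_{(x,z)}\overline{M}$, as claimed.

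For part (2), I would differentiate. Since $\Phi^{T\overline{M}}_{\mathcal{L}}$ is a retraction onto $\mathcal{L}$, its tangent map is idempotent, $(\Phi^{T\overline{M}}_{\mathcal{L}})_*\circ(\Phi^{T\overline{M}}_{\mathcal{L}})_*=(\Phi^{T\overline{M}}_{\mathcal{L}})_*$, and restricts to the identity on $T_{\overline{p}}\mathcal{L}$ for $\overline{p}\in\mathcal{L}$; a linear idempotent that is the identity on its image $T_{\overline{p}}\mathcal{L}$ is precisely the linear projection onto $T_{\overline{p}}\mathcal{L}$. It remains to compute the kernel. The coordinate formula (\ref{043}) already shows that $\overline{\partial}_{w^\beta}$ is annihilated while $\overline{\partial}_{x^i},\overline{\partial}_{y^j},\overline{\partial}_{z^\alpha}$ survive modulo $\overline{\partial}_{w^\beta}$, so the kernel is exactly $\mathrm{span}\{\overline{\partial}_{w^\beta},\forall\beta\}$; a dimension count $2m-(m+n)=m-n$ confirms there is nothing more. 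Finally I would identify this kernel invariantly: from the listed action $(\pi_*)_*(\overline{\partial}_{z^\alpha})=(\pi_*)_*(\overline{\partial}_{w^\beta})=0$ one gets $\ker(\pi_*)_*=\mathrm{span}\{\overline{\partial}_{z^\alpha},\overline{\partial}_{w^\beta}\}$, while $\psi(x,z,y,w)=(x,z)$ gives $\ker\psi_*=\mathrm{span}\{\overline{\partial}_{y^j},\overline{\partial}_{w^\beta}\}$, and the intersection is $\mathrm{span}\{\overline{\partial}_{w^\beta}\}$.

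Neither part presents a genuine difficulty; the content is bookkeeping. The one point deserving care is the logical separation between the two projection statements — part (1) is a fiberwise projection of $T_{(x,z)}\overline{M}$, whereas part (2) is a projection of the larger double tangent space $T_{\overline{p}}(T\overline{M})$ — together with correctly matching the two distinct intersection descriptions of the respective kernels. Establishing that a retraction differentiates to a projection is the conceptual step that keeps the argument coordinate-free, so that the explicit formula (\ref{043}) is invoked only to pin down the kernel.
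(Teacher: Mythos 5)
Your proof is correct; the paper states Lemma \ref{lemma-3-3} as an ``easy lemma'' with no proof supplied, and your argument — idempotency of the retraction $\Phi^{T\overline{M}}_{\mathcal{L}}$ giving the projection property fiberwise in (1) and after differentiation in (2), with the kernels read off from the linearity of $l$ in $y$ and from (\ref{043}) plus a dimension count — is exactly the routine verification the author intends the reader to perform. Nothing to add.
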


In later discussion, we will also use $\Phi^{\mathcal{L}\backslash0}_{TM\backslash0}=
\Phi^{\mathcal{L}}_{TM}|_{\mathcal{L}\backslash0}$, with the same local representation $(x,z,y,l(x,z,y))\mapsto(x,y)$ as $ \Phi^{\mathcal{L}}_{TM}$ and
the extra requirement that $y\neq0$.

\begin{lemma} \label{lemma-3-4}
(1) Denote $(\Phi^{\mathcal{L}\backslash0}_{TM\backslash0})_*
:T(\mathcal{L}\backslash0)\rightarrow T(TM\backslash0)$  the tangent map of $\Phi^{\mathcal{L}\backslash0}_{TM\backslash0}$, then
$(\Phi^{\mathcal{L}\backslash0}_{TM\backslash0})_*
(\overline{\mathbf{G}}|_{\mathcal{L}\backslash0})
=\mathbf{G}$. That means
$(\Phi^{\mathcal{L}\backslash0}_{TM\backslash0})_*
(\overline{\mathbf{G}}(x,z,y,l(x,z,y)))=\mathbf{G}(x,y)$
at any point $\overline{p}=(x,z,y,l(x,z,y))\in\mathcal{L}\backslash0$.

(2) At each point $\overline{p}=(x,z,y,l(x,z,y))\in\mathcal{L}\backslash0$,
$$\overline{\delta}_{x^i}+
\tfrac{\partial}{\partial y^i}l^\alpha(x,z)
\overline{\delta}_{z^\alpha}\quad\mbox{and}\quad
\overline{\partial}_{x^i}-\mathbf{N}^j_i(x,y)
\overline{\partial}_{y^j}+
\tfrac{\partial}{\partial y^i}l^\alpha(x,z)
\overline{\partial}_{z^\alpha}$$
have the same $\Phi^{T(T\overline{M})|_{\mathcal{L}}}_{T\mathcal{L}}$-image,
which is mapped by
$(\Phi^{\mathcal{L}\backslash0}_{M\backslash0})_*$
to $\delta_{x^i}=\partial_{x^i}-\mathbf{N}^j_i\partial_{y^j}$ at $(x,y)$.
\end{lemma}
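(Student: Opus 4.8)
The plan is to reduce both parts to the two identities of Lemma \ref{lemma-3-1} together with the description of the projection $\Phi^{T(T\overline{M})|_{\mathcal{L}}}_{T\mathcal{L}}$ in Lemma \ref{lemma-3-3}, so that beyond careful bookkeeping of the standard local frames no genuinely new computation is needed.

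For (1), I would use condition (1) in the definition of the submersion, namely that $\overline{\mathbf{G}}|_{\mathcal{L}\backslash0}$ is tangent to $\mathcal{L}\backslash0$. Restricting (\ref{042}) to $\mathcal{L}$ (where $w^\alpha=l^\alpha(x,z,y)$) and discarding the $\overline{\partial}_{w^\beta}$ components — which is legitimate since $\overline{\mathbf{G}}|_{\mathcal{L}}\in T\mathcal{L}$ and the consistency of these components is exactly (\ref{007}) in Remark \ref{remark-3-2} — leaves $y^i\overline{\partial}_{x^i}+l^\alpha\overline{\partial}_{z^\alpha}-2\overline{\mathbf{G}}^i\overline{\partial}_{y^i}$ modulo the $w$-directions. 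Applying $(\Phi^{\mathcal{L}\backslash0}_{TM\backslash0})_*$, which sends $\overline{\partial}_{x^i}\mapsto\partial_{x^i}$, $\overline{\partial}_{y^j}\mapsto\partial_{y^j}$ and annihilates $\overline{\partial}_{z^\alpha}$, kills the $l^\alpha\overline{\partial}_{z^\alpha}$ term and returns $y^i\partial_{x^i}-2\overline{\mathbf{G}}^i\partial_{y^i}$, which equals $\mathbf{G}$ by Lemma \ref{lemma-3-1}(1).

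For (2), I would expand the two displayed vectors in the full frame $\{\overline{\partial}_{x^i},\overline{\partial}_{y^j},\overline{\partial}_{z^\alpha},\overline{\partial}_{w^\beta}\}$ of $T\overline{M}|_{\mathcal{L}}$. Writing out $\overline{\delta}_{x^i}=\overline{\partial}_{x^i}-\overline{\mathbf{N}}^j_i\overline{\partial}_{y^j}-\overline{\mathbf{N}}^\beta_i\overline{\partial}_{w^\beta}$ and the analogous $\overline{\delta}_{z^\alpha}$, the coefficient of $\overline{\partial}_{y^j}$ in $\overline{\delta}_{x^i}+\tfrac{\partial}{\partial y^i}l^\alpha\,\overline{\delta}_{z^\alpha}$ is precisely $\overline{\mathbf{N}}^j_i+\tfrac{\partial}{\partial y^i}l^\alpha\,\overline{\mathbf{N}}^j_\alpha$, which Lemma \ref{lemma-3-1}(2) identifies with $\mathbf{N}^j_i$. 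Hence the difference of the two displayed vectors is a pure combination of the $\overline{\partial}_{w^\beta}$, so it lies in $\ker\Phi^{T(T\overline{M})|_{\mathcal{L}}}_{T\mathcal{L}}$ by Lemma \ref{lemma-3-3}(2), giving the equality of their $\Phi$-images immediately.

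Finally, to identify that common image I would apply (\ref{043}): the projection discards the $\overline{\partial}_{w^\beta}$ part and reads off $\overline{\partial}_{x^i}-\mathbf{N}^j_i\overline{\partial}_{y^j}+\tfrac{\partial}{\partial y^i}l^\alpha\,\overline{\partial}_{z^\alpha}$ (mod $\overline{\partial}_{w^\beta}$), and then $(\Phi^{\mathcal{L}\backslash0}_{TM\backslash0})_*$ kills the $\overline{\partial}_{z^\alpha}$ term and returns $\partial_{x^i}-\mathbf{N}^j_i\partial_{y^j}=\delta_{x^i}$ at $(x,y)$. I expect the only delicate point to be keeping the two maps $\Phi^{T(T\overline{M})|_{\mathcal{L}}}_{T\mathcal{L}}$ and $(\Phi^{\mathcal{L}\backslash0}_{TM\backslash0})_*$ straight and consistently identifying the restricted frame on $T\overline{M}|_{\mathcal{L}}$ with the intrinsic frame on $T\mathcal{L}$, rather than any substantive analytic difficulty; everything of substance is already packaged in Lemmas \ref{lemma-3-1} and \ref{lemma-3-3}.
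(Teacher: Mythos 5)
Your proposal is correct and follows essentially the same route as the paper: part (1) is the pushforward of the local representation (\ref{042}) combined with Lemma \ref{lemma-3-1}(1), and part (2) is the expansion of $\overline{\delta}_{x^i}+\tfrac{\partial}{\partial y^i}l^\alpha\overline{\delta}_{z^\alpha}$ in the coordinate frame, the identification of the $\overline{\partial}_{y^j}$-coefficient via Lemma \ref{lemma-3-1}(2), and the observation that the discrepancy lies in $\mathrm{span}\{\overline{\partial}_{w^\beta}\}$, which is annihilated by the projection. The only cosmetic difference is that you justify discarding the $\overline{\partial}_{w^\beta}$-components in (1) by tangency to $\mathcal{L}$ and (\ref{007}), whereas the paper simply notes that $(\Phi^{\mathcal{L}\backslash0}_{TM\backslash0})_*$ kills both the $\overline{\partial}_{z^\alpha}$- and $\overline{\partial}_{w^\beta}$-directions outright.
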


\begin{proof} (1) The tangent map
$(\Phi^{\mathcal{L}\backslash0}_{TM\backslash0})_*$
maps any tangent vector $a^i\overline{\partial}_{x^i}+b^j\overline{\partial}_{y^j}
+c^\alpha\overline{\partial}_{z^\alpha}+
d^\beta\overline{\partial}_{w^\beta}$ of $\mathcal{L}\backslash0$ at $\overline{p}=(x,z,y,l(x,z,y))$
to the tangent vector
$a^i{\partial}_{x^i}+b^j{\partial}_{y^j}$
of $TM\backslash0$ at $(x,y)$. So (1) in Lemma \ref{lemma-3-4} follows immediately after the local representations (\ref{041}) and (\ref{042}), and
(1) in Lemma \ref{lemma-3-1}.

(2) Recall that
\begin{eqnarray*}
\overline{\delta}_{x^i}=\overline{\partial}_{x^i}
-\overline{\mathbf{N}}_i^j\overline{\partial}_{y^j}
-\overline{\mathbf{N}}_i^\beta\overline{\partial}_{w^\beta}
\quad\mbox{and}\quad
\overline{\delta}_{z^\alpha}=\overline{\partial}_{z^\alpha}
-\overline{\mathbf{N}}_{\alpha}^j\overline{\partial}_{y^j}
-\overline{\mathbf{N}}_\alpha^\beta
\overline{\partial}_{w^\beta}
\end{eqnarray*}
are the horizonal liftings for $\overline{\partial}_{x^i}$ and
$\overline{\partial}_{z^\alpha}$ on $\overline{M}$ respectively. By (2) in Lemma \ref{lemma-3-1}, at any  $\overline{p}=(x,z,y,l(x,z,y))\in\mathcal{L}\backslash0$, we have
\begin{eqnarray*}
& &\overline{\delta}_{x^i}+\tfrac{\partial}{\partial y^i}l^\alpha(x,z)\overline{\delta}_{z^\alpha}\\
&=&
\overline{\partial}_{x^i}
-(\overline{\mathbf{N}}_i^j+\tfrac{\partial}{\partial y^i}l^\alpha(x,z)\overline{\mathbf{N}}_\alpha^j)
\overline{\partial}_{y^j}
+\tfrac{\partial}{\partial y^i}l^\alpha(x,z)\overline{\partial}_{z^\alpha}
-(\overline{\mathbf{N}}^\beta_i+
\tfrac{\partial}{\partial y^i}l^\alpha(x,z)
\overline{\mathbf{N}}_\alpha^\beta)
\overline{\partial}_{w^\beta}\\
&=&\overline{\partial}_{x^i}-
\mathbf{N}^j_i(x,y)\overline{\partial}_{y^j}+
\tfrac{\partial}{\partial y^i}l^\alpha(x,z)
\overline{\partial}_{z^\alpha},\quad\mbox{(mod }\overline{\partial}_{w^\beta},\forall\beta{)}.
\end{eqnarray*}
The first statement in (2) of Lemma \ref{lemma-3-4} follows (\ref{043}) immediately, and the second is obvious by the previous description for $(\Phi^{\mathcal{L}\backslash0}_{TM\backslash0})_*$.
\end{proof}\medskip

\subsection{Liftings from $M$ to $\mathcal{L}$}
\label{subsection-3-4}

Besides the liftings provided by the horizonal and vertical distributions for $\mathbf{G}$ and $\overline{\mathbf{G}}$
(see Section \ref{subsection-2-4}), there are more involving $\mathcal{L}$.

Firstly, we can use $\mathcal{L}$ to lift smooth tangent vector fields and smooth curves.

Let $X$ be a smooth tangent vector field on $M$. Then there exists a unique smooth tangent vector field $\overline{X}$ on $\overline{M}$, such that at each $(x,z)\in\overline{M}$, $\overline{X}(x,z)\in\mathcal{L}_{(x,z)}$ and $\pi_*(\overline{X}(x,z))=X(x)$. We call this $\overline{X}$
the {\it lifting of $X$ in $\mathcal{L}$}.

Denote $c(t)$ and $\overline{c}(t)$ the smooth curves on $M$ and $\overline{M}$ respectively, which is defined when $t$ is close to $0$. We call $\overline{c}(t)$
a {\it lifting of} $c(t)$ {\it tangent to} $\mathcal{L}$ if $\pi(\overline{c}(t))=c(t)$ and $\dot{\overline{c}}(t)\in\mathcal{L}_{\overline{c}(t)}$ for all possible values of $t$. Given any smooth curve $c(t)=(x^i(t))$, its lifting $\overline{c}(t)=(x^i(t),z^\alpha(t))$ tangent to $\mathcal{L}$ is locally a solution of
the ODE
$$\dot{z}^\alpha(t)=l^\alpha(x(t),z(t),\dot{x}(t))
=\dot{x}^j(t)\cdot\tfrac{\partial}{\partial y^j}l^\alpha(x(t),z(t)),
\quad\forall\alpha.$$
So for any $(x_0,z_0)\in\pi^{-1}(c(0))\subset \overline{M}$,
there exists a unique lifting $\overline{c}(t)$ of $c(t)$ which is defined around $t=0$, tangent to $\mathcal{L}$ and satisfies $\overline{c}(0)=(x_0,z_0)$.
By definition, $c(t)$ is a geodesic on $(M,\mathbf{G})$ if and only if any lifting $\overline{c}(t)$
of $c(t)$ tangent to $\mathcal{L}$ is a geodesic on $(\overline{M},\overline{\mathbf{G}})$.

Nextly, we can use $\mathcal{L}$ to lift a smooth vector field along a curve.

Let $c(t)$ be a smooth curve on $M$, $X(t)$ a smooth vector field along $c(t)$,
$\overline{c}(t)$ the smooth curve on $\overline{M}$ which is the lifting of $c(t)$ tangent to $\mathcal{L}$. Then we call the smooth vector field  $\overline{X}(t)$ along $\overline{c}(t)$ the {\it lifting of $X(t)$ along $\overline{c}(t)$ in $\mathcal{L}$} if
$\overline{X}(t)\in\mathcal{L}_{\overline{c}(t)}$ and $\Phi_{TM}^\mathcal{L}(\overline{X}(t))
=\pi_*(\overline{X}(t))=X(t)$
 for all values of $t$. Locally, for $X(t)=X^i(t)\partial_{x^i}$
along $c(t)$, its lifting along $\overline{c}(t)$ in $\mathcal{L}$ can be presented as
$\overline{X}(t)=X^i(t)(\overline{\partial}_{x^i}
+\tfrac{\partial}{\partial y^i}l^\alpha(\overline{c}(t))
\overline{\partial}_{z^\alpha})$. In particular, $\dot{\overline{c}}(t)=\dot{c}^i(t)(\overline{\partial}_{x^i}
+\tfrac{\partial}{\partial y^i}l^\alpha(\overline{c}(t)))
\overline{\partial}_{z^\alpha}$ along $\overline{c}(t)$
is the lifting in $\mathcal{L}$ of $\dot{c}(t)$ along $c(t)$.

\begin{lemma}\label{lemma-3-5}
Let $\overline{c}(t)$ and $\overline{X}(t)$ be the liftings of $ {c}(t)$ and $ {X}(t)$ respectively, as mentioned above. Suppose $\dot{c}(t)$ is nowhere-vanishing, then
\begin{equation}\label{044}
\Phi^{\mathcal{L}}_{TM}
(\Phi^{T\overline{M}}_{\mathcal{L}}
(\overline{D}_{\dot{\overline{c}}(t)}\overline{X}(t)))
=D_{\dot{c}(t)}X(t),
\end{equation}
in which $D$ and $\overline{D}$ are
the linearly covariant derivatives for $(M,\mathbf{G})$ and
$(\overline{M},\overline{\mathbf{G}})$. That means, for the smooth vector field $D_{\dot{c}(t)}X(t)$ along $c(t)$,
$\Phi^{T\overline{M}}_{\mathcal{L}}
(\overline{D}_{\dot{\overline{c}}(t)}\overline{X}(t))$ is its lifting along $\overline{c}(t)$ in $\mathcal{L}$.
In particular, $X(t)$ is linearly parallel along $c(t)$ if and only if $\Phi^{T\overline{M}}_{\mathcal{L}}
(\overline{D}_{\dot{\overline{c}}(t)}\overline{X}(t))=0$.
\end{lemma}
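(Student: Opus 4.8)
The plan is to reduce everything to a computation in the standard local coordinates of Section \ref{subsection-3-2} and then reinterpret it through the coordinate-free descriptions of the bundle maps in Lemma \ref{lemma-3-3}. Write $X(t)=X^i(t)\partial_{x^i}$, so that its lifting in $\mathcal{L}$ is $\overline{X}(t)=X^i(t)\overline{\partial}_{x^i}+X^i(t)\tfrac{\partial}{\partial y^i}l^\alpha(\overline{c}(t))\overline{\partial}_{z^\alpha}$, while the lifted velocity is $\dot{\overline{c}}(t)=\dot{x}^i(t)\overline{\partial}_{x^i}+\dot{x}^i(t)\tfrac{\partial}{\partial y^i}l^\alpha(\overline{c}(t))\overline{\partial}_{z^\alpha}$, with $\dot{z}^\alpha(t)=l^\alpha(\overline{c}(t),\dot{x}(t))$ because $\overline{c}(t)$ is tangent to $\mathcal{L}$. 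First I would write out $\overline{D}_{\dot{\overline{c}}(t)}\overline{X}(t)$ from the local formula for the linearly covariant derivative on $(\overline{M},\overline{\mathbf{G}})$, recording the $\overline{\partial}_{x^j}$-components and the $\overline{\partial}_{z^\beta}$-components separately. The key structural point is that all coefficients $\overline{\mathbf{N}}^A_B$ are evaluated at $(\overline{c}(t),\dot{\overline{c}}(t))$, which lies in $\mathcal{L}\backslash0$ precisely because $\dot{c}(t)$ is nowhere-vanishing, so Lemma \ref{lemma-3-1} becomes available at every such point.

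The heart of the argument lies in the $\overline{\partial}_{x^j}$-components. Collecting the contributions from both $\overline{X}^i=X^i$ and $\overline{X}^\alpha=X^i\tfrac{\partial}{\partial y^i}l^\alpha$ gives
\[
\tfrac{{\rm d}}{{\rm d}t}X^j(t)+\bigl(\overline{\mathbf{N}}^j_i+\tfrac{\partial}{\partial y^i}l^\alpha\,\overline{\mathbf{N}}^j_\alpha\bigr)(\overline{c}(t),\dot{\overline{c}}(t))\,X^i(t).
\]
At this point I would invoke (2) of Lemma \ref{lemma-3-1}, which states exactly that this parenthesized combination of connection coefficients, evaluated on $\mathcal{L}\backslash0$, equals $\mathbf{N}^j_i(c(t),\dot{c}(t))$. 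Hence the $\overline{\partial}_{x^j}$-components collapse to $\tfrac{{\rm d}}{{\rm d}t}X^j(t)+\mathbf{N}^j_i(c(t),\dot{c}(t))X^i(t)$, which are precisely the components of $D_{\dot{c}(t)}X(t)$.

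It then remains to track the two projections. By Lemma \ref{lemma-3-3}(1), $\Phi^{T\overline{M}}_{\mathcal{L}}$ is the fiberwise projection onto $\mathcal{L}$ along $\mathrm{span}\{\overline{\partial}_{z^\alpha}\}$; it discards the (generally nonzero) $\overline{\partial}_{z^\beta}$-part of $\overline{D}_{\dot{\overline{c}}(t)}\overline{X}(t)$ and re-attaches the $\mathcal{L}$-components determined by the $\overline{\partial}_{x^j}$-part. Afterwards $\Phi^{\mathcal{L}}_{TM}=\pi_*|_{\mathcal{L}}$ sends $\overline{\partial}_{x^j}$ to $\partial_{x^j}$, so the composite $\Phi^{\mathcal{L}}_{TM}\circ\Phi^{T\overline{M}}_{\mathcal{L}}$ reads off exactly the $\overline{\partial}_{x^j}$-components computed above and transplants them to $M$, which is (\ref{044}). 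For the final assertion, observe that $\Phi^{\mathcal{L}}_{TM}$ restricts to a fiberwise linear isomorphism $\mathcal{L}_{(x,z)}\to T_xM$, since $\mathcal{L}_{(x,z)}$ is a linear complement of $\ker\pi_*$; therefore $\Phi^{\mathcal{L}}_{TM}(\Phi^{T\overline{M}}_{\mathcal{L}}(\overline{D}_{\dot{\overline{c}}(t)}\overline{X}(t)))=0$ if and only if $\Phi^{T\overline{M}}_{\mathcal{L}}(\overline{D}_{\dot{\overline{c}}(t)}\overline{X}(t))=0$, and by (\ref{044}) this happens exactly when $X(t)$ is linearly parallel along $c(t)$.

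The computation is otherwise routine; the one point demanding care is the simultaneous bookkeeping of the two families of indices together with the recognition that the composed projection extracts only the $\overline{\partial}_{x^j}$-components and silently drops the $\overline{\partial}_{z^\beta}$-components, which need not vanish. The genuine input making the two covariant derivatives agree is Lemma \ref{lemma-3-1}(2), so I expect the real content of the lemma to be concentrated entirely in that substitution rather than in any analytic difficulty.
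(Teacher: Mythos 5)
Your proposal is correct and follows essentially the same route as the paper: a standard local coordinate computation of $\overline{D}_{\dot{\overline{c}}(t)}\overline{X}(t)$ whose $\overline{\partial}_{x^j}$-components collapse to those of $D_{\dot{c}(t)}X(t)$ via Lemma \ref{lemma-3-1}(2), followed by the observation that $\Phi^{\mathcal{L}}_{TM}$ is a fiberwise isomorphism. The only cosmetic difference is that you carry the general coefficients $X^i(t)$ through the computation directly, whereas the paper first reduces to $X=\partial_{x^1}$ by $\mathbb{R}$-linearity and the Leibniz rule; both are valid and the key input is identical.
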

\begin{proof} Firstly, we prove (\ref{044}).
It is easy to see the following facts:
\begin{enumerate}
\item both sides of (\ref{044}) are $\mathbb{R}$-linear;
\item for any smooth real function $f(t)$, the lifting of $f(t)X(t)$ in $\mathcal{L}$ along $\overline{c}(t)$ coincides with $f(t)\overline{X}(t)$;
\item the Lebniz Rule provides $D_{\dot{c}(t)}(f(t)X(t))=(\tfrac{{\rm d}}{{\rm d}t}f(t)) X(t)+
f(t)D_{\dot{c}}X(t)$ and
\begin{eqnarray*}
\Phi^{\mathcal{L}}_{TM}
(\Phi^{T\overline{M}}_{\mathcal{L}}
(\overline{D}_{\dot{\overline{c}}(t)}(f(t)\overline{X}(t))))=
(\tfrac{{\rm d}}{{\rm d}t}f(t))X(t)+f(t)\Phi^{\mathcal{L}}_{TM}
(\Phi^{T\overline{M}}_{\mathcal{L}}
(\overline{D}_{\dot{\overline{c}}(t)}\overline{X}(t))).
\end{eqnarray*}
So when we replace $X(t)$ with $f(t)X(t)$, the same extra term appears in both sides of (\ref{044}).
\end{enumerate}
To summarize, we only need to prove (\ref{044}) for $X(t)=\partial_{x^i}$,
and without loss of generality, we may assume $X(t)=\partial_{x^1}$.

Locally we denote $c(t)=x(t)=(x^i(t))$ and $\overline{c}(t)=(x(t),z(t))=(x^i(t),z^\alpha(t))$, with
$\dot{z}^\alpha(t)=l^\alpha(x(t),z(t),\dot{x}(t))$ for all $\alpha$.
Then $\overline{X}(t)=\overline{\partial}_{x^1}
+\tfrac{\partial}{\partial y^1}l^\alpha(x(t),z(t))
\overline{\partial}_{z^\alpha}$.
Direct calculation shows
\begin{equation}\label{046}
D_{\dot{c}(t)}X(t)=
\mathbf{N}_1^j(x(t),\dot{x}(t))\partial_{x^j}|_{c(t)},
\end{equation}
and
\begin{eqnarray}
\overline{D}_{\dot{\overline{c}}(t)}\overline{X}(t)
&=&(\overline{\mathbf{N}}_1^j
(\overline{c}(t),\dot{\overline{c}}(t))\overline{\partial}_{x^j}
+\overline{\mathbf{N}}_1^\beta
(\overline{c}(t),\dot{\overline{c}}(t))
\overline{\partial}_{z^\beta}
+\tfrac{{\rm d}}{{\rm d}t}(\tfrac{\partial}{\partial y^1}l^\alpha(x(t),z(t)))\overline{\partial}_{z^\alpha}
\nonumber\\
& &+\tfrac{\partial}{\partial y^1}l^\alpha(x(t),z(t))
\overline{\mathbf{N}}_\alpha^j
(\overline{c}(t),\dot{\overline{c}}(t))\overline{\partial}_{x^j}
+\tfrac{\partial}{\partial y^1}l^\alpha(x(t),z(t))
\overline{\mathbf{N}}_\alpha^\beta
(\overline{c}(t),\dot{\overline{c}}(t))
\overline{\partial}_{z^\beta})|_{\overline{c}(t)}\nonumber\\
&=&(\overline{\mathbf{N}}_1^j
(\overline{c}(t),\dot{\overline{c}}(t))+
\tfrac{\partial}{\partial y^1}l^\alpha
(x(t),z(t))\overline{\mathbf{N}}_\alpha^j
(\overline{c}(t),\dot{\overline{c}}(t)))
\overline{\partial}_{x^j}|_{\overline{c}(t)}
\ \mbox{(mod }\overline{\partial}_{z^\alpha},\forall \alpha\mbox{)}.\label{045}
\end{eqnarray}
Apply (2) in Lemma \ref{lemma-3-1} at
$(\overline{c}(t),\dot{\overline{c}}(t))
=(x(t),z(t),\dot{x}(t),l(x(t),z(t),\dot{x}(t)))
\in\mathcal{L}\backslash0$,
we see from (\ref{045}) that
\begin{equation}\label{047}
\overline{D}_{\dot{\overline{c}}(t)}\overline{X}(t)
={\mathbf{N}}^j_1(x(t),\dot{x}(t))\overline{\partial}_{x^j}
|_{\overline{c}(t)},\quad\mbox{(mod }\overline{\partial}_{z^\alpha},\forall\alpha\mbox{)}.
\end{equation}

Comparing (\ref{046}) and (\ref{047}), we get
(\ref{044}) for $X=\partial_{x^1}$. Previous observations
end the proof of (\ref{044}) for other $X$.

The second statement in Lemma \ref{lemma-3-5} is just an interpretation for (\ref{044}).

The third statement follows after
the first immediately, because at each point $(x,z)\in\overline{M}$, $\Phi^{\mathcal{L}}_{TM}$ induces
a linear isomorphism from $\mathcal{L}_{(x,z)}$ to $T_xM$.
\end{proof}\medskip

Finally, we consider lifting  $\widetilde{\dot{c}(t)}^\mathcal{H}$ to $\mathcal{L}$.

Suppose $\overline{c}(t)$  is a lifting of $c(t)$ tangent to $\mathcal{L}$, with $t\in(a,b)$. Without loss of generality, we may assume the interval $(a,b)$ is sufficiently small and $\dot{c}(t)$ is nowhere-vanishing. Then
$N=\cup_{t\in(a,b)}(T_{c(t)}M\backslash\{0\})$,
 $\overline{N}=\cup_{t\in(a,b)}
 (T_{\overline{c}(t)}\overline{M}\backslash\{0\})$ and
$N_\mathcal{L}=\cup_{t\in(a,b)}
(\mathcal{L}_{(\overline{c}(t),
\dot{\overline{c}}(t))}\backslash\{0\})$
are imbedded submanifolds in $TM\backslash0$, $T\overline{M}\backslash0$ and $\overline{N}$
respectively, and
$\Phi^{N_\mathcal{L}}_N=
\Phi^{\mathcal{L}\backslash0}_{TM\backslash0}
|_{N_\mathcal{L}}:N_{\mathcal{L}}
\rightarrow N$ is a diffeomorphism.

If we locally denote $c(t)=x(t)=(x^i(t))$, then the horizonal lifting $$\widetilde{\dot{c}(t)}^\mathcal{H}=\dot{x}^i(t)\delta_{x^i}
=
\dot{x}^i(t)(\partial_{x^i}-\mathbf{N}^j_i(x(t),y)
\partial_{y^j})$$
at any $(x(t),y)\in T_{c(t)}M\backslash\{0\}$
is a smooth tangent vector field on $N$. We call the smooth tangent vector field
$((\Phi_{N}^{N_\mathcal{L}})^{-1})_*
(\widetilde{\dot{c}(t)}^\mathcal{H})$ on $N_\mathcal{L}$
 a {\it lifting of $\widetilde{\dot{c}(t)}^\mathcal{H}$ to $\mathcal{L}$}.

Meanwhile, if we denote $\overline{c}(t)=(x(t),z(t))=(x^i(t),z^\alpha(t))$, then
the horizonal lifting $\widetilde{\dot{\overline{c}}(t)}^{\mathcal{\overline{H}}}$
is a smooth tangent vector field on $\overline{N}$, determined by
\begin{eqnarray*}
\widetilde{\dot{\overline{c}}(t)}^{\mathcal{\overline{H}}}
&=&\dot{x}^i(t)\overline{\delta}_{x^i}+\dot{z}^\alpha(t)
\overline{\delta}_{z^\alpha}\\
&=&\dot{x}^i(t)(\overline{\partial}_{x^i}-
\overline{\mathbf{N}}^j_i(\overline{c}(t),y,w)
\overline{\partial}_{y^j}-
\overline{\mathbf{N}}^\beta_i(\overline{c}(t),y,w)
\overline{\partial}_{w^\beta})\\& &+
\dot{z}^\alpha(t)(\overline{\partial}_{z^\alpha}-
\overline{\mathbf{N}}^j_\alpha(\overline{c}(t),y,w)
\overline{\partial}_{y^j}-
\overline{\mathbf{N}}^\beta_\alpha(\overline{c}(t),y,w)
\overline{\partial}_{w^\beta})
\end{eqnarray*}
at each $\overline{p}=(\overline{c}(t),y,w)\in T_{c(t)}\overline{M}\backslash\{0\}$.

The relation between a lifting of $\widetilde{\dot{c}(t)}^\mathcal{H}$ to $\mathcal{L}$ and
$\widetilde{\dot{\overline{c}}(t)}^{\mathcal{\overline{H}}}$
is revealed  by the following lemma.

\begin{lemma}\label{lemma-3-6} $\Phi^{T(T\overline{M})|_{\mathcal{L}}}_{T\mathcal{L}}
(\widetilde{\dot{\overline{c}}(t)}^{\mathcal{\overline{H}}}
|_{N_\mathcal{L}})=((\Phi_{N}^{N_\mathcal{L}})^{-1})_*
(\widetilde{\dot{c}(t)}^\mathcal{H})$.
\end{lemma}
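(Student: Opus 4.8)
The plan is to verify the claimed identity pointwise, exploiting that $\Phi^{N_\mathcal{L}}_N=\Phi^{\mathcal{L}\backslash0}_{TM\backslash0}|_{N_\mathcal{L}}$ is a diffeomorphism. Fix $t$ and a point $\overline{p}=(\overline{c}(t),y,l(\overline{c}(t),y))\in N_\mathcal{L}$. By definition the right-hand side $((\Phi_{N}^{N_\mathcal{L}})^{-1})_*(\widetilde{\dot{c}(t)}^\mathcal{H})$ is the unique tangent vector of $N_\mathcal{L}$ at $\overline{p}$ whose image under $(\Phi^{N_\mathcal{L}}_N)_*=(\Phi^{\mathcal{L}\backslash0}_{TM\backslash0})_*|_{TN_\mathcal{L}}$ is $\widetilde{\dot{c}(t)}^\mathcal{H}$ at $(c(t),y)$. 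Hence it suffices to prove two things about the left-hand side $v:=\Phi^{T(T\overline{M})|_{\mathcal{L}}}_{T\mathcal{L}}(\widetilde{\dot{\overline{c}}(t)}^{\mathcal{\overline{H}}}|_{N_\mathcal{L}})$: first, that $v$ is tangent to $N_\mathcal{L}$ (not merely to $\mathcal{L}$), and second, that $(\Phi^{\mathcal{L}\backslash0}_{TM\backslash0})_*(v)=\widetilde{\dot{c}(t)}^\mathcal{H}$. Since $(\Phi^{\mathcal{L}\backslash0}_{TM\backslash0})_*$ becomes injective only after restriction to $TN_\mathcal{L}$, both points are genuinely needed.

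For the second point, I would first rewrite $\widetilde{\dot{\overline{c}}(t)}^{\mathcal{\overline{H}}}$ using the tangency of $\overline{c}(t)$ to $\mathcal{L}$. The defining ODE of the lifting together with the homogeneous linearity of $l^\alpha$ in its $y$-entry (Remark \ref{remark-3-2}) gives $\dot{z}^\alpha(t)=l^\alpha(x(t),z(t),\dot{x}(t))=\dot{x}^i(t)\tfrac{\partial}{\partial y^i}l^\alpha(x(t),z(t))$, so that
\[
\widetilde{\dot{\overline{c}}(t)}^{\mathcal{\overline{H}}}
=\dot{x}^i(t)\,\overline{\delta}_{x^i}+\dot{z}^\alpha(t)\,\overline{\delta}_{z^\alpha}
=\dot{x}^i(t)\bigl(\overline{\delta}_{x^i}+\tfrac{\partial}{\partial y^i}l^\alpha(x,z)\,\overline{\delta}_{z^\alpha}\bigr).
\]
Evaluated on $N_\mathcal{L}$ this is $\dot{x}^i(t)$ times exactly the vector appearing in (2) of Lemma \ref{lemma-3-4}. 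Applying $\Phi^{T(T\overline{M})|_{\mathcal{L}}}_{T\mathcal{L}}$ and then $(\Phi^{\mathcal{L}\backslash0}_{TM\backslash0})_*$, that lemma immediately yields $(\Phi^{\mathcal{L}\backslash0}_{TM\backslash0})_*(v)=\dot{x}^i(t)\delta_{x^i}=\widetilde{\dot{c}(t)}^\mathcal{H}$ at $(c(t),y)$, which is the second point.

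The tangency of $v$ to $N_\mathcal{L}$ is the only step that does not follow formally, and I expect it to be the main (albeit mild) obstacle, since without it the inversion of $\Phi^{N_\mathcal{L}}_N$ is unjustified. The clean way to settle it is to observe that $\Phi^{T\overline{M}}_{\mathcal{L}}$ carries $\overline{N}$ into $N_\mathcal{L}$: indeed $\Phi^{T\overline{M}}_{\mathcal{L}}(x(t),z(t),y,w)=(x(t),z(t),y,l(x(t),z(t),y))\in\mathcal{L}_{\overline{c}(t)}$, which lies in $N_\mathcal{L}$ whenever $y\neq0$. Since $\Phi^{T(T\overline{M})|_{\mathcal{L}}}_{T\mathcal{L}}$ is by definition the restriction of the tangent map $(\Phi^{T\overline{M}}_{\mathcal{L}})_*$, and $\widetilde{\dot{\overline{c}}(t)}^{\mathcal{\overline{H}}}|_{N_\mathcal{L}}$ is tangent to $\overline{N}$ at points of $N_\mathcal{L}\subset\overline{N}$, its image $v$ must be tangent to $N_\mathcal{L}$. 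Combining this with the second point and the fact that $\Phi^{N_\mathcal{L}}_N$ is a diffeomorphism, the uniqueness noted in the first paragraph forces $v=((\Phi_{N}^{N_\mathcal{L}})^{-1})_*(\widetilde{\dot{c}(t)}^\mathcal{H})$, completing the proof. One could alternatively verify tangency by an explicit coordinate computation using the graph descriptions of $\mathcal{L}$ and $N_\mathcal{L}$ (the $\overline{\partial}_{w^\beta}$-components match automatically), but the map-level argument is shorter.
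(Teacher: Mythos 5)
Your proof is correct and follows essentially the same route as the paper: both hinge on using the lifting ODE $\dot{z}^\alpha(t)=\dot{x}^i(t)\tfrac{\partial}{\partial y^i}l^\alpha(x(t),z(t))$ to rewrite $\widetilde{\dot{\overline{c}}(t)}^{\overline{\mathcal{H}}}$ as $\dot{x}^i(t)\bigl(\overline{\delta}_{x^i}+\tfrac{\partial}{\partial y^i}l^\alpha\overline{\delta}_{z^\alpha}\bigr)$ and then invoking (2) of Lemma \ref{lemma-3-4}. The only divergence is the tangency of the image to $N_\mathcal{L}$: the paper verifies it by exhibiting an explicit spanning set of $T_{\overline{p}}N_\mathcal{L}$ and citing the first statement of Lemma \ref{lemma-3-4}(2), whereas you deduce it from the map-level observation that $\Phi^{T\overline{M}}_{\mathcal{L}}$ carries the $y\neq0$ part of $\overline{N}$ into $N_\mathcal{L}$ while $\widetilde{\dot{\overline{c}}(t)}^{\overline{\mathcal{H}}}$ is tangent to $\overline{N}$ --- both arguments are valid.
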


\begin{proof}
Since $\overline{c}(t)=(x(t),z(t))=(x^i(t),z^\alpha(t))$ is a lifting of $c(t)=x(t)=(x^i(t))$, which is tangent to $\mathcal{L}$, we have
$\dot{z}^\alpha(t)=l^\alpha(x(t),z(t),\dot{x}(t))
=\dot{x}^i(t)\tfrac{\partial}{\partial y^i}l^\alpha(x(t),z(t))$.
So $$\dot{\overline{c}}(t)=\dot{x}^i(t)\overline{\partial}_{x^i}
+\dot{z}^\alpha(t)\overline{\partial}_{z^\alpha}=
\dot{x}^i(t)(\overline{\partial}_{x^i}+
\tfrac{\partial}{\partial y^i}l^\alpha(x(t),z(t))\overline{\partial}_{z^\alpha})$$ at each $\overline{c}(t)$, and
$$\widetilde{\dot{\overline{c}}(t)}^{\overline{\mathcal{H}}}
=\dot{x}^i(t)(\overline{\delta}_{x^i}+
\tfrac{\partial}{\partial y^i}l^\alpha(x(t),z(t))\overline{\delta}_{z^\alpha})$$
at each $\overline{p}=(\overline{c}(t),y,w)\in T_{\overline{c}(t)}M\backslash
\{0\}$. When we have
$\overline{p}=(x(t),z(t),y,l(x(t),z(t),y))\in N_{\mathcal{L}}$,
$T_{\overline{p}}N_{\mathcal{L}}$ is linearly spanned by
$\Phi^{T(T\overline{M})|_{\mathcal{L}}}_{T\mathcal{L}}
(\dot{x}^i(t)(\overline{\partial}_{x^i}+
\tfrac{\partial}{\partial y^i}l^\alpha(x(t),z(t))\overline{\partial}_{z^\alpha}))$ and
$\Phi^{T(T\overline{M})
|_{\mathcal{L}}}_{T\mathcal{L}}(\overline{\partial}_{y^j})$
for all $j$. So the first statement in (2) of Lemma \ref{lemma-3-4} indicates
that $\Phi^{T(T\overline{M})}_{T\mathcal{L}}
(\widetilde{\dot{\overline{c}}(t)}^{\overline{\mathcal{H}}}
|_{N_\mathcal{L}})$ is tangent to $N_\mathcal{L}$. Then
the second statement in (2) of Lemma \ref{lemma-3-4} indicates
it coincides with the lifting of $\widetilde{\dot{c}(t)}^\mathcal{H}$ to $\mathcal{L}$.
\end{proof}

\subsection{Riemann curvature formula for a submersion}
\label{subsection-3-5}
Lemma \ref{lemma-3-5} and Lemma \ref{lemma-3-6} provide the submersion techniques for
parallel translations.
Now we consider the submersion technique for the Riemann curvature.

Denote $\mathcal{K}$ the linear sub-bundle of $T(T\overline{M})$ such that each point $\overline{p}=(x,z,y,w)\in T\overline{M}$, its fiber $\mathcal{K}_{\overline{p}}$ is the kernel of
$(\pi_*)_*:T(T\overline{M})\rightarrow T(TM)$, i.e.,
$\mathcal{K}_{\overline{p}}=
\mathrm{span}\{\overline{\partial}_{z^\alpha},
\overline{\partial}_{w^\alpha},\forall \alpha\}$.

The tangent bundle $T(\mathcal{L}\backslash\{0\})$
has two linear sub-bundles, $\mathcal{H}'$ and
$\mathcal{K}'$. At each point $\overline{p}=(x,z,y,l(x,z,y))\in\mathcal{L}\backslash0$, the fiber $\mathcal{H}'_{\overline{p}}$ consists of the  $\Phi^{T(T\overline{M})
|_{\mathcal{L}}}_{T\mathcal{L}}$-images
of the $\overline{\mathcal{H}}$-horizonal liftings of all vectors
in $\mathcal{L}_{(x,z)}$, i.e., it is linearly spanned by
 $$
 \Phi^{T(T\overline{M})|_{\mathcal{L}}}_{\mathcal{L}}
(\overline{\delta}_{x^i}+\tfrac{\partial}{\partial y^i}l^\alpha(x,z,y)
\overline{\delta}_{z^\alpha}), \quad\forall i,
$$
and the fiber $\mathcal{K}'_{\overline{p}}$  is $T_{\overline{p}}(\mathcal{L}\backslash0)\cap \mathcal{K}_{\overline{p}}=
\Phi^{T(T\overline{M})|_{\mathcal{L}}}_{T\mathcal{L}}
(\mathcal{K}_{\overline{p}})$, i.e., it
is linearly spanned by
$\overline{\partial}_{z^\alpha}+\tfrac{\partial}{\partial z^\alpha}l^\beta(x,z,y)\overline{\partial}_{w^\beta}$, $\forall \alpha.$

The following lemma interprets the Riemann curvature of $(M,\mathbf{G})$ by the submersion $(\pi,\mathcal{L})$.

\begin{lemma}\label{lemma-3-7}
Let $\overline{X}$ be a smooth section of $\mathcal{H}'$, which is defined around $\overline{p}=(x,z,y,l(x,z,y))$ in $\mathcal{L}\backslash0$ and satisfies
$\overline{X}(\overline{p})=
\Phi^{T(T\overline{M})|_{\mathcal{L}}}_{\mathcal{L}}
(a^i(\overline{\delta}_{x^i}+\tfrac{\partial}{\partial y^i}
l^\alpha(x,z,y)\overline{\delta}_{z^\alpha}))$,
then we have
\begin{eqnarray}\label{075}
[\overline{\mathbf{G}}|_{\mathcal{L}\backslash0},
\overline{X}]|_{\overline{p}}
= a^i\mathbf{R}^k_i(x,y)\overline{\partial}_{y^k}\quad
\mbox{(mod }\mathcal{H}'\oplus\mathcal{K}'\mbox{)}.
\end{eqnarray}
\end{lemma}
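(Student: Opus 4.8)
The plan is to reduce the Riemann-curvature identity $(\ref{075})$ on $\mathcal{L}\backslash0$ to the already-established Lemma \ref{lemma-2-1} for $(M,\mathbf{G})$ on $TM\backslash0$, by transporting the bracket computation through the canonical bundle maps of the submersion. The key fact linking the two sides is that $\Phi^{\mathcal{L}\backslash0}_{TM\backslash0}$ carries $\overline{\mathbf{G}}|_{\mathcal{L}\backslash0}$ to $\mathbf{G}$ (Lemma \ref{lemma-3-4}(1)), and that the horizontal lifting to $\mathcal{H}'$ of $a^i\delta_{x^i}$ is, up to the fibres being quotiented out, exactly the object appearing in $\overline{X}$ (Lemma \ref{lemma-3-4}(2)). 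So I expect $(\ref{075})$ to be the downstairs identity $[\mathbf{G},\widetilde{X}^{\mathcal{H}}]=\widetilde{\mathbf{R}_y(X)}^{\mathcal{V}}\pmod{\mathcal{H}}$ pulled back along $\Phi^{\mathcal{L}\backslash0}_{TM\backslash0}$, with the extra vertical directions $\mathcal{K}'$ of the submersion absorbing the ambiguity.

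First I would fix the smooth section $\overline{X}$ of $\mathcal{H}'$ with the prescribed value at $\overline{p}$ and write everything in the standard local coordinates of Section \ref{subsection-3-2}. Because $\Phi^{\mathcal{L}\backslash0}_{TM\backslash0}$ is a submersion onto $TM\backslash0$ that sends $\overline{\mathbf{G}}|_{\mathcal{L}\backslash0}$ to $\mathbf{G}$ and sends the generators $\overline{\delta}_{x^i}+\tfrac{\partial}{\partial y^i}l^\alpha\,\overline{\delta}_{z^\alpha}$ of $\mathcal{H}'$ to $\delta_{x^i}$ (by Lemma \ref{lemma-3-4}(2)), the two vector fields $\overline{\mathbf{G}}|_{\mathcal{L}\backslash0}$ and $\overline{X}$ are $\Phi^{\mathcal{L}\backslash0}_{TM\backslash0}$-related to $\mathbf{G}$ and to $\widetilde{X}^{\mathcal{H}}$ respectively, where $X=a^i(x)\partial_{x^i}$ is the smooth vector field on $M$ determined by the horizontal data. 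Naturality of the Lie bracket under a related map then gives
\begin{equation*}
(\Phi^{\mathcal{L}\backslash0}_{TM\backslash0})_*
\bigl([\overline{\mathbf{G}}|_{\mathcal{L}\backslash0},\overline{X}]|_{\overline{p}}\bigr)
=[\mathbf{G},\widetilde{X}^{\mathcal{H}}]|_{(x,y)}
=\widetilde{\mathbf{R}_y(X(x))}^{\mathcal{V}}
\pmod{\mathcal{H}},
\end{equation*}
using Lemma \ref{lemma-2-1} at the last step. Tracing back the local description of $(\Phi^{\mathcal{L}\backslash0}_{TM\backslash0})_*$ (it kills the $\overline{\partial}_{z^\alpha},\overline{\partial}_{w^\beta}$ directions and sends $\overline{\partial}_{y^k}\mapsto\partial_{y^k}$, $\overline{\partial}_{x^i}\mapsto\partial_{x^i}$), the $\mathcal{V}$-part $\mathbf{R}^k_i(x,y)a^i\partial_{y^k}$ downstairs lifts to $a^i\mathbf{R}^k_i\,\overline{\partial}_{y^k}$ upstairs, which is the claimed right-hand side.

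The remaining task is bookkeeping of the two indeterminacy subspaces. Downstairs the bracket is only determined modulo $\mathcal{H}=\mathrm{span}\{\delta_{x^i}\}$; upstairs the claim is modulo $\mathcal{H}'\oplus\mathcal{K}'$. I would verify that $(\Phi^{\mathcal{L}\backslash0}_{TM\backslash0})_*$ maps $\mathcal{H}'_{\overline{p}}$ onto $\mathcal{H}_{(x,y)}$ (immediate from Lemma \ref{lemma-3-4}(2)) and annihilates $\mathcal{K}'_{\overline{p}}$, since $\mathcal{K}'\subset\mathcal{K}=\ker(\pi_*)_*$ and $\Phi^{\mathcal{L}\backslash0}_{TM\backslash0}=\pi_*|_{\mathcal{L}\backslash0}$ on the level of base points. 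Thus the $\mathcal{H}'\oplus\mathcal{K}'$ ambiguity upstairs is precisely the preimage of the $\mathcal{H}$ ambiguity downstairs together with the fibre of $\Phi^{\mathcal{L}\backslash0}_{TM\backslash0}$, so the congruence transports cleanly.

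The main obstacle I anticipate is the legitimacy of the ``$\Phi$-relatedness plus naturality of the bracket'' step: the map $\Phi^{\mathcal{L}\backslash0}_{TM\backslash0}$ is not a diffeomorphism but a submersion, so two vector fields being pointwise $\Phi$-related at $\overline{p}$ is not by itself enough to conclude their brackets are $\Phi$-related there. I would therefore need $\overline{\mathbf{G}}|_{\mathcal{L}\backslash0}$ and $\overline{X}$ to be \emph{globally} $\Phi$-related to well-defined vector fields $\mathbf{G}$ and $\widetilde{X}^{\mathcal{H}}$ in a neighbourhood, not merely matched at one point. The first is guaranteed by Lemma \ref{lemma-3-4}(1); for $\overline{X}$ I would choose the section of $\mathcal{H}'$ to be the $\Phi^{T(T\overline{M})|_{\mathcal{L}}}_{T\mathcal{L}}$-image of the $\overline{\mathcal{H}}$-horizontal lift of the $\mathcal{L}$-lift $\overline{X^{\mathrm{sec}}}$ of a fixed smooth field $X=a^i\partial_{x^i}$ on $M$, so that $\overline{X}$ is genuinely $\Phi$-related to $\widetilde{X}^{\mathcal{H}}$ everywhere on a neighbourhood in $\mathcal{L}\backslash0$. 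With that canonical choice the bracket on the left of $(\ref{075})$ depends on $\overline{X}$ only through its value at $\overline{p}$ modulo $\mathcal{H}'\oplus\mathcal{K}'$ — which should be checked by a short Leibniz-type argument showing that altering $\overline{X}$ within $\mathcal{H}'\oplus\mathcal{K}'$ changes $[\overline{\mathbf{G}}|_{\mathcal{L}\backslash0},\overline{X}]$ only inside $\mathcal{H}'\oplus\mathcal{K}'$ — and the computation above then yields the stated formula.
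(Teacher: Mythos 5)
Your proposal is correct and follows essentially the same route as the paper: push both $\overline{\mathbf{G}}|_{\mathcal{L}\backslash0}$ and the canonical section of $\mathcal{H}'$ forward by $\Phi^{\mathcal{L}\backslash0}_{TM\backslash0}$, invoke naturality of the bracket together with Lemma \ref{lemma-2-1}, and transport the congruence back using that $(\Phi^{\mathcal{L}\backslash0}_{TM\backslash0})_*$ maps $\mathcal{H}'$ isomorphically onto $\mathcal{H}$ and kills $\mathcal{K}'$. Your closing remark that general sections are handled by a Leibniz argument modulo $\mathcal{H}'\oplus\mathcal{K}'$ is exactly the paper's final step of passing to $C^\infty$-linear combinations of the basis sections.
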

\begin{proof}
Firstly, we prove (\ref{075}) for $\overline{X}=\Phi^{T(T\overline{M})
|_{\mathcal{L}}}_{T\mathcal{L}}(\overline{\delta}_{x^i}+\tfrac{\partial}{\partial y^i}
l^\alpha(x,z,y)\overline{\delta}_{z^\alpha})$. By (2) in Lemma \ref{lemma-3-4}, $(\Phi^{\mathcal{L}\backslash0}_{TM\backslash0})_*
(\overline{X})
=\delta_{x^i}=\partial_{x^i}-\mathbf{N}^j_i\partial_{y^j}$.
By (1) in Lemma \ref{lemma-3-4}, $(\Phi^{\mathcal{L}\backslash0}_{TM\backslash0})_*
(\overline{\mathbf{G}}|_{\mathcal{L}\backslash0})
=\mathbf{G}$.
Using the property of tangent map and Lemma \ref{lemma-2-1}, we get at any
$\overline{p}=(x,z,y,l(x,z,y))\in\mathcal{L}\backslash0$ that
\begin{eqnarray}
(\Phi^{\mathcal{L}\backslash0}_{TM\backslash0})_*
([\overline{\mathbf{G}}
|_{\mathcal{L}\backslash0},\overline{X}]|_{\overline{p}})
&=&[(\Phi^{\mathcal{L}\backslash0}_{TM\backslash0})_*
(\overline{\mathbf{G}}|_{\mathcal{L}\backslash0}),
(\Phi^{\mathcal{L}\backslash0}_{TM\backslash0})_*
(\overline{X})]|_{(x,y)}\nonumber\\
&=&
[\mathbf{G},\delta_{x^i}]|_{(x,y)}= \mathbf{R}_i^k(x,y)\partial_{y^k}\quad\mbox{(mod }\mathcal{H}\mbox{)}.\label{076}
\end{eqnarray}
Then (\ref{076}) implies (\ref{075}) for that special
$\overline{X}$, using the following observations:
\begin{enumerate}
\item by (2) in Lemma \ref{lemma-3-4}, $(\Phi^{\mathcal{L}\backslash0}_{TM\backslash0})_*$
maps each fiber of $\mathcal{H}'$ isomorphically to that of
$\mathcal{H}=\mathrm{span}\{\delta_{x^i},\forall i\}$;
\item $(\Phi^{\mathcal{L}\backslash0}_{TM\backslash0})_*$
maps each fiber of $\mathcal{K}'$ to 0;
\item At $\overline{p}=(x,z,y,l(x,z,y))\in\mathcal{L}\backslash0$, any vector in the quotient space $T_{\overline{p}}
    (\mathcal{L}\backslash0)/(\mathcal{H}'_{\overline{p}}
    \oplus\mathcal{K}_{\overline{p}})$ can be uniquely represented by a vector in the subspace $$\mathrm{span}\{\overline{\partial}_{y^i}+
    \tfrac{\partial}{\partial y^i}l^\beta(x,z)\overline{\partial}_{w^\beta},\forall i\}\subset T_{\overline{p}}(\mathcal{L}\backslash0).$$
\end{enumerate}

Nextly, we consider any smooth section
$\overline{X}$, which is a $C^\infty$-linear combination of the sections
$\Phi^{T(T\overline{M})|_{\mathcal{L}}}_{\mathcal{L}}(\overline{\delta}_{x^i}+\tfrac{\partial}{\partial y^i}
l^\alpha(x,z,y)\overline{\delta}_{z^\alpha})$ of $\mathcal{H}'$. Since (\ref{075}) is mod $\mathcal{H}'\oplus\mathcal{K}'$, its validity is not affected by the smooth coefficients.
\end{proof}\medskip
\section{Left invariant spray geometry}

In this section, we recall the technique of global invariant frames on a Lie group and collect some known results for a left invariant spray structure from \cite{Xu2021-1,Xu2021-2}.

\subsection{Invariant frames on a Lie group and its tangent bundle}
\label{subsection-4-1}

Let $G$ be a Lie group. Denote $L_g(g')=gg'$ and $R_g(g')=g'g$ for all $g,g'\in G$ the left and right translations. Let $\mathfrak{g}=T_eG$ be the Lie algebra of $G$, with the bracket
$[\cdot,\cdot]_\mathfrak{g}$. Notice that in this paper, we reserve the more usual notation $[\cdot,\cdot]$ for the Lie bracket between
smooth vector fields.
We fix a basis $\{e_1,\cdots,e_m\}$ of $\mathfrak{g}$
and denote $c^k_{ij}$ the corresponding Lie bracket coefficients in $[e_i,e_j]_\mathfrak{g}=c^k_{ij}e_k$.

For each $i\in\{1,\cdots,m\}$, there exists a left invariant vector field $U_i$, and a right invariant vector field $V_i$, such that $U_i(e)=V_i(e)=e_i$. Then any left (or right) invariant vector field on $G$ is a $\mathbb{R}$-linear combination of $U_i$'s (or $V_i$'s respctively). Left and right invariant vector fields generate
right and left translations respectively on the Lie group.

We call $\{U_i,\forall i\}$
a {\it left invariant frame on $G$}. We expand it to a left invariant
frame on $TG$ as following.
Let $\widetilde{U}_i$ be
the complete lifting of $U_i$ (see (\ref{050}) and (\ref{051})
for its standard local coordinate representation).
Any $y\in T_gG$
can be uniquely presented as $y=u^i U_i(g)$. So we have
the frame $\{\partial_{u^i},\forall i\}$ in $T_gG$ corresponding to this linear coordinate. With $g$ exhausting all points of $G$, each $\partial_{u^i}$ is then a smooth tangent vector field on $TG$. Since all $\widetilde{U}_i$'s and
$\partial_{u^j}$'s are invariant with respect to the action of $((L_g)_*)_*$ for all $g\in G$, we call
$\{\widetilde{U}_i,\partial_{u^i},\forall i\}$ a
{\it left invariant frame on $TG$}.
Similarly, using $V_i$ and $\partial_{v^i}$ for $y=v^i V_i(g)\in T_gG$, we have a right invariant
frame $\{\widetilde{V}_i,\partial_{v^i},\forall i\}$ on $TG$.

Let $\{\partial_{x^i},\partial_{y^i},\forall i\}$ be the frame
for a standard local coordinate $(x^i,y^j)$ on $G$. Then
the transformation between
$\{U_i,\partial_{u^i},\forall i\}$ and $\{\partial_{x^i},\partial_{y^i},\forall i\}$
is the following \cite{Xu2021-2},
\begin{eqnarray}
& & U_i=A^j_i\partial_{x^i},\quad u^i=y^j B^i_j\quad\mbox{and}\quad \partial_{u^i}=A^j_i\partial_{u^i},\label{050}\\
& &\widetilde{U}_i=A^j_i\partial_{x^i}+
y^j\tfrac{\partial}{\partial x^j}A^k_i\partial_{y^i},\label{051}
\end{eqnarray}
where $(A^j_i)=(A^j_i(x))$ and $(B^j_i)=(B^j_i(x))=
(A^j_i(x))^{-1}$ (i.e., $A^j_iB^k_j=B^j_iA^k_j=\delta^k_i$) are matrix valued functions which only depend on the $x$-entry.
Notice that $\partial_{x^i}$'s in (\ref{050}) and
(\ref{051}) are local tangent vector fields on $G$ and their complete lifting to $TG$ respectively.

The invariancy of $\{\widetilde{U}_i,\partial_{u^i},\forall i\}$
and $\{\widetilde{V}_i,\partial_{v^i},\forall i\}$ implies the following obvious facts for every $i$ and $j$ (see (8) in \cite{Xu2021-1})
\begin{eqnarray}
& &[U_i,U_j]=c^k_{ij}U_k,\quad [V_i,V_j]=-c^k_{ij}V_k,\quad
[U_i,V_j]=0,\nonumber\\
& &[\widetilde{U}_i,\widetilde{U}_j]=c^k_{ij}\widetilde{U}_k,\quad [\widetilde{V}_i,\widetilde{V}_j]=-c^k_{ij}\widetilde{V}_k,\quad
[\widetilde{U}_i,\widetilde{V}_j]=0,\nonumber\\
& &\widetilde{U}_i v^j=0,\quad
[\widetilde{U}_i,\partial_{v^j}]=0,\quad
\widetilde{V}_i u^j=0,\quad
[\widetilde{V}_i,\partial_{u^j}]=0.\label{063}
\end{eqnarray}

To provide more relations between the left and right invariant frames on $TG$, we need to introduce the functions
$\phi_i^j$ and $\psi_i^j$ on $G$ such that $\mathrm{Ad}(g)e_i=\phi_i^j(g) e_j$ and $\mathrm{Ad}(g^{-1})e_i=\psi_i^j(g)e_j$. Then
we have
\begin{equation}
U_i=\phi_i^j V_j,\quad u^i=\psi_j^i v^j,\quad \partial_{u^i}=\phi_i^j\partial_{v^j},
\end{equation}
and (see (9) and Lemma 3.1 in \cite{Xu2021-1})

\begin{lemma} \label{lemma-4-1}
Keep above notations, then at each point of $G$, we have the following
$$
\mbox{(1) } \phi_l^j V_j\phi_i^k=c_{li}^j\phi_j^k,\
\mbox{(2) } \widetilde{U}_i=\phi_i^j\widetilde{V}_j+
c^q_{pi}u^p\partial_{u^q},\
\mbox{(3) }\widetilde{U}_i u^j=c^j_{li}u^l,\
\mbox{(4) } [\widetilde{U}_i,\partial_{u^l}]=c^p_{il}\partial_{u^p}.
$$
\end{lemma}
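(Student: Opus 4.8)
The plan is to prove the four identities in the order given, deriving (2)--(4) from (1) together with the commutation relations already recorded in (\ref{063}).

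\emph{Step 1 (identity (1)).} Since $U_l=\phi_l^jV_j$, the differential operator $\phi_l^jV_j$ applied to the function $\phi_i^k$ is simply $U_l\phi_i^k$, so (1) is equivalent to $U_l\phi_i^k=c^j_{li}\phi_j^k$. I would prove this by flowing. The left invariant field $U_l$ generates the \emph{right} translations $R_{\exp(te_l)}$, so $U_l\phi_i^k|_g=\tfrac{{\rm d}}{{\rm d}t}|_{t=0}\phi_i^k(g\exp(te_l))$. Writing $\phi_i^k(g\exp(te_l))e_k=\mathrm{Ad}(g\exp(te_l))e_i=\mathrm{Ad}(g)\,\mathrm{Ad}(\exp(te_l))e_i$ and differentiating at $t=0$, the factor $\mathrm{Ad}(\exp(te_l))$ contributes $\mathrm{ad}(e_l)e_i=[e_l,e_i]_\mathfrak{g}=c^j_{li}e_j$, whence $U_l\phi_i^k\,e_k=c^j_{li}\mathrm{Ad}(g)e_j=c^j_{li}\phi_j^k\,e_k$. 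This yields (1).

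\emph{Step 2 (identity (2)).} Here I would invoke the product rule for complete lifts, $\widetilde{fX}=f\,\widetilde{X}+f^{c}X^{v}$, valid for any function $f$ and vector field $X$ on $G$, where $f^{c}$ is the fibrewise-linear function $y\mapsto y(f)$ and $X^{v}$ is the vertical lift; this is verified in one line from the coordinate formula (\ref{051}). Applying it to $U_i=\phi_i^jV_j$ gives $\widetilde{U}_i=\phi_i^j\widetilde{V}_j+\sum_j(\phi_i^j)^{c}V_j^{v}$. For the correction term, (1) gives $(\phi_i^j)^{c}=u^p(U_p\phi_i^j)=c^k_{pi}\phi_k^j\,u^p$, while inverting $U_i=\phi_i^jV_j$ (using $\phi_i^j\psi_j^a=\delta_i^a$, i.e. $\mathrm{Ad}(g^{-1})\mathrm{Ad}(g)=\mathrm{id}$) gives $V_j=\psi_j^aU_a$ and hence $V_j^{v}=\psi_j^a\partial_{u^a}$. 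Summing over $j$ and contracting with $\phi_k^j\psi_j^a=\delta_k^a$ collapses the correction term to $c^q_{pi}u^p\partial_{u^q}$, which is precisely (2).

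\emph{Steps 3--4 (identities (3)--(4)).} These fall out of (2) and (\ref{063}). For (3), apply $\widetilde{U}_i$ to $u^j$ through (2): the summand $\phi_i^k\widetilde{V}_ku^j$ vanishes because $\widetilde{V}_ku^j=0$ by (\ref{063}), and $c^q_{pi}u^p\partial_{u^q}u^j=c^j_{pi}u^p=c^j_{li}u^l$. For (4), expand $[\widetilde{U}_i,\partial_{u^l}]$ via (2): the bracket $[\phi_i^k\widetilde{V}_k,\partial_{u^l}]$ vanishes since $[\widetilde{V}_k,\partial_{u^l}]=0$ by (\ref{063}) and $\partial_{u^l}\phi_i^k=0$ ($\phi$ depends only on the base point), while $[c^q_{pi}u^p\partial_{u^q},\partial_{u^l}]=-c^q_{li}\partial_{u^q}=c^p_{il}\partial_{u^p}$ by antisymmetry of the structure constants.

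The only genuinely delicate point is Step 1, where one must keep straight that the \emph{left} invariant field $U_l$ generates \emph{right} translations and that differentiating $\mathrm{Ad}$ produces $\mathrm{ad}(e_l)$ with the indices in the order $c^j_{li}$; after that, the main computational effort is the bookkeeping in Step 2 (the complete-lift correction term, its vertical lift, and the contraction $\phi_k^j\psi_j^a=\delta_k^a$), and Steps 3--4 are immediate.
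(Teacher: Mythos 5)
Your proof is correct. Note that the paper itself gives no proof of Lemma \ref{lemma-4-1}; it simply cites (9) and Lemma 3.1 of \cite{Xu2021-1}, so there is nothing internal to compare against. Your argument is a clean self-contained verification: identity (1) via the flow $g\mapsto g\exp(te_l)$ of $U_l$ and differentiation of $\mathrm{Ad}$, identity (2) via the product rule $\widetilde{fX}=f\widetilde{X}+f^{c}X^{v}$ for complete lifts together with $V_j^{v}=\psi_j^a\partial_{u^a}$ and the contraction $\phi_k^j\psi_j^a=\delta_k^a$, and (3)--(4) by expanding through (2) and the relations in (\ref{063}); all the index bookkeeping (in particular the sign $-c^q_{li}=c^q_{il}$ in (4)) checks out and is consistent with the paper's conventions $[U_i,U_j]=c^k_{ij}U_k$ and $U_i=\phi_i^jV_j$.
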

\subsection{Left invariant spray structure on a Lie group}

\label{subsection-4-2}
A spray structure $\mathbf{G}$ on the Lie group $G$ is called
{\it left invariant} (or {\it right invariant}), if for each $g\in G$, $((L_g)_*)_*\mathbf{G}=\mathbf{G}$ (or $((L_g)_*)_*\mathbf{G}=\mathbf{G}$ respectively). It is called {\it bi-invariant} if it is both left and right invariant.

There exists a canonical affine bi-invariant spray structure $\mathbf{G}_0$ on $G$ (see Theorem A in \cite{Xu2021-1}).

\begin{theorem}\label{theorem-4-2}
On any Lie group $G$, $\mathbf{G}_0=u^i\widetilde{U}_i=
v^i\widetilde{V}_i$ is an affine bi-invariant spray structure.
\end{theorem}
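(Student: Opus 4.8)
The plan is to check three things in sequence: that $\mathbf{G}_0=u^i\widetilde{U}_i$ is a genuine spray structure whose coefficients are quadratic (hence positively $2$-homogeneous), that the left- and right-invariant expressions $u^i\widetilde{U}_i$ and $v^i\widetilde{V}_i$ actually coincide, and that the resulting vector field is invariant under both $((L_g)_*)_*$ and $((R_g)_*)_*$. First I would pass to a standard local coordinate and substitute the conversion formulas (\ref{050}) and (\ref{051}). Writing $u^i=y^jB^i_j$ and $\widetilde{U}_i=A^j_i\partial_{x^j}+y^j\tfrac{\partial}{\partial x^j}A^k_i\,\partial_{y^k}$, the $\partial_{x^j}$-part of $u^i\widetilde{U}_i$ collapses to $y^lB^i_lA^j_i\partial_{x^j}=y^j\partial_{x^j}$ since $(A^j_i)$ and $(B^j_i)$ are inverse matrices, so $\mathbf{G}_0$ has exactly the shape (\ref{001}) with $-2\mathbf{G}^k=y^ly^jB^i_l\tfrac{\partial}{\partial x^j}A^k_i$. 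This coefficient is visibly a homogeneous quadratic polynomial in the $y$-entry, which simultaneously proves that $\mathbf{G}_0$ is a spray structure and that it is affine.

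Next I would establish the global identity $u^i\widetilde{U}_i=v^i\widetilde{V}_i$, where the invariant-frame relations do the real work. Substituting Lemma \ref{lemma-4-1}(2), namely $\widetilde{U}_i=\phi_i^j\widetilde{V}_j+c^q_{pi}u^p\partial_{u^q}$, gives
\[
u^i\widetilde{U}_i=(u^i\phi_i^j)\widetilde{V}_j+c^q_{pi}u^iu^p\partial_{u^q}.
\]
The second sum vanishes because $c^q_{pi}$ is antisymmetric in $p,i$ while $u^iu^p$ is symmetric. For the first sum I would use $u^i=\psi_j^iv^j$ together with the fact that $(\phi_i^j)$ and $(\psi_i^j)$ are the matrices of $\mathrm{Ad}(g)$ and $\mathrm{Ad}(g^{-1})$, which are mutually inverse; concretely $\mathrm{Ad}(g)\mathrm{Ad}(g^{-1})=\mathrm{Id}$ yields $\psi_k^i\phi_i^j=\delta_k^j$, whence $u^i\phi_i^j=\psi_k^iv^k\phi_i^j=v^j$. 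Thus $u^i\widetilde{U}_i=v^j\widetilde{V}_j$, as claimed.

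Finally, bi-invariance comes essentially for free once both expressions are in hand. The fields $\widetilde{U}_i$ are $((L_g)_*)_*$-invariant by construction, and the fiber functions $u^i$ are left invariant because $(L_g)_*$ carries $U_i(g')$ to $U_i(gg')$ and hence preserves the $u$-coordinates; therefore $\mathbf{G}_0=u^i\widetilde{U}_i$ is left invariant. Applying the identical argument to the right-invariant frame and to the expression $v^i\widetilde{V}_i$ shows $\mathbf{G}_0$ is right invariant as well, so it is bi-invariant. I expect the only genuinely delicate point to be the index bookkeeping in the identity $u^i\phi_i^j=v^j$ — that is, confirming the contraction uses the correct one of the two inverse relations $\psi_k^i\phi_i^j=\delta_k^j$ and $\phi_k^i\psi_i^j=\delta_k^j$ — whereas the cancellation $c^q_{pi}u^iu^p=0$ and the local-coordinate verification of the spray and affine properties are routine.
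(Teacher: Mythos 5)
Your proposal is correct and complete; note that the paper itself gives no proof of this statement (it is quoted as Theorem A of \cite{Xu2021-1}), and your verification uses precisely the ingredients the paper supplies for this purpose: the frame-transformation formulas (\ref{050})--(\ref{051}) to exhibit the quadratic coefficients $-2\mathbf{G}^k=y^ly^jB^i_l\tfrac{\partial}{\partial x^j}A^k_i$, Lemma \ref{lemma-4-1}(2) together with the antisymmetry $c^q_{pi}=-c^q_{ip}$ and the inverse relation $\psi_k^i\phi_i^j=\delta_k^j$ to get $u^i\widetilde{U}_i=v^j\widetilde{V}_j$, and the invariance of the frames and fiber coordinates for bi-invariance. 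All three steps check out, including the delicate index contraction $u^i\phi_i^j=\psi^i_kv^k\phi_i^j=v^j$.
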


Using $\mathbf{G}_0$ as the origin, any left invariant spray structure $\mathbf{G}$ on $G$ can be presented as
$\mathbf{G}=\mathbf{G}_0-\mathbf{H}$. Here  $\mathbf{H}=\mathbf{H}^i\partial_{u^i}$ is a smooth vector field on $ TG\backslash0$, tangent to each $T_gG$, in which
each $\mathbf{H}^i=\mathbf{H}^i(x,y)$ is positively 2-homogeneous for its $y$-entry. The restriction $\eta=\mathbf{H}|_{T_eG\backslash\{0\}}$ is called the {\it spray vector field} for $\mathbf{G}$. We sometimes view $\eta$ as a positively 2-homogeneous smooth map
from $\mathfrak{g}\backslash\{0\}$ to $\mathfrak{g}$, i.e.,
$\eta(y)=\mathbf{H}^i(e,y)e_i$, $\forall y\in\mathfrak{g}\backslash\{0\}$. Obviously, the correspondence between the set of all left invariant spray structures on $G$ and the set of all positive 2-homogeneous smooth maps $\eta:
\mathfrak{g}\backslash\{0\}\rightarrow\mathfrak{g}$, mapping each $\mathbf{G}$ to its spray vector field, is one-to-one.

Based on the spray vector field $\eta:\mathfrak{g}\backslash\{0\}
\rightarrow\mathfrak{g}$, we further define the {\it connection operator} $N:\mathfrak{g}\backslash\{0\}\times\mathfrak{g}
\rightarrow\mathfrak{g}$ by
$N(y,v)=\tfrac12D\eta(y,v)-\tfrac12[y,v]_\mathfrak{g}$, in which $D\eta(y,v)=\tfrac{{\rm d}}{{\rm d}t}|_{t=0}\eta(y+tv)$.

Using left translations, the geometry of a left invariant spray structure $\mathbf{G}$ can be described by its spray vector field and connection operator. The following results are already known (see Theorem D in \cite{Xu2021-1}, and Theorem 1.1, Theorem 1.4 and Lemma 3.2 in \cite{Xu2021-2}).

\begin{theorem}\label{theorem-4-3}
Let $\mathbf{G}$ be a left invariant spray structure on a Lie group $G$ with the spray vector field $\eta$. Then for any open interval $(a,b)\subset\mathbb{R}$ containing $0$, there is a one-to-one correspondence between the following two sets:
\begin{enumerate}
\item the set of all $c(t)$ with $t\in(a,b)$ and $c(0)=e$, which are geodesics for $\mathbf{G}$;
\item the set of all $y(t)$ with $t\in(a,b)$, which are integral curves of $-\eta$.
\end{enumerate}
The correspondence is from a geodesic $c(t)$ on $(G,\mathbf{G})$ to the curve $y(t)=(L_{c(t)^{-1}})_*(\dot{c}(t))$ on $\mathfrak{g}\backslash\{0\}$.
\end{theorem}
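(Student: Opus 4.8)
The plan is to read the geodesic equation in the global left invariant frame $\{\widetilde{U}_i,\partial_{u^i}\}$ on $TG$, where left invariance of $\mathbf{G}$ forces the coefficients of $\mathbf{H}$ to depend only on the fiber coordinates, so that the fiber part of the equation becomes exactly $\dot{y}=-\eta(y)$. First I would take any smooth curve $c(t)$ with nowhere-vanishing velocity and write $\dot{c}(t)=u^i(t)U_i(c(t))$. Since each $U_i$ is left invariant, $(L_{c(t)^{-1}})_*$ sends $U_i(c(t))$ to $e_i$, so the curve in the statement is just $y(t)=(L_{c(t)^{-1}})_*(\dot{c}(t))=u^i(t)e_i$; thus the fiber coordinates $u^i(t)$ are precisely the components of $y(t)$ in the basis $\{e_i\}$. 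Next I compute the velocity of the lift $\gamma(t)=(c(t),\dot{c}(t))$ in the frame $\{\widetilde{U}_i,\partial_{u^i}\}$. Because $\widetilde{U}_i$ projects to $U_i$ under $\pi_*:TG\to G$ while $\partial_{u^i}$ is vertical, and $\pi_*\dot{\gamma}=\dot{c}=u^iU_i$, the $\widetilde{U}_i$-component of $\dot{\gamma}$ must equal $u^i(t)$; writing the remaining vertical part as $\beta^i\partial_{u^i}$ and differentiating the fiber coordinate gives $\dot{u}^j=\dot{\gamma}(u^j)=u^i(\widetilde{U}_iu^j)+\beta^j$. By (3) of Lemma \ref{lemma-4-1} the first term is $c^j_{li}u^lu^i$, which vanishes by antisymmetry of $c^j_{li}$ in $l,i$, so $\beta^j=\dot{u}^j$ and
$$\dot{\gamma}(t)=u^i(t)\widetilde{U}_i(\gamma(t))+\dot{u}^i(t)\partial_{u^i}(\gamma(t)).$$

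Using Theorem \ref{theorem-4-2} and the decomposition $\mathbf{G}=\mathbf{G}_0-\mathbf{H}=u^i\widetilde{U}_i-\mathbf{H}^i\partial_{u^i}$, the geodesic condition $\dot{\gamma}(t)=\mathbf{G}(\gamma(t))$ matches the $\widetilde{U}_i$-parts automatically and reduces to the fiber equation $\dot{u}^i(t)=-\mathbf{H}^i(\gamma(t))$. Left invariance of $\mathbf{G}$, hence of $\mathbf{H}$, means that in the left invariant frame each $\mathbf{H}^i$ depends only on the fiber coordinates $u=(u^j)$ and not on the base point, while by definition $\eta(y)=\mathbf{H}^i(u)e_i$ when $y=u^ie_i$. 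Therefore the fiber equation becomes $\tfrac{{\rm d}}{{\rm d}t}y(t)=\dot{u}^i(t)e_i=-\mathbf{H}^i(u(t))e_i=-\eta(y(t))$, i.e. $c(t)$ is a geodesic if and only if $y(t)$ is an integral curve of $-\eta$. This shows the forward map is well defined and, read backwards, that every integral curve arising from a geodesic is obtained this way.

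For the inverse I would start from an integral curve $y(t)=u^i(t)e_i$ of $-\eta$ on $(a,b)$ and reconstruct $c(t)$ by solving the non-autonomous equation $\dot{c}(t)=(L_{c(t)})_*y(t)=u^i(t)U_i(c(t))$ with $c(0)=e$; reversing the computation above shows any such $c$ is a geodesic with the prescribed $y$, so the two maps are mutually inverse and the correspondence is one-to-one. The step I expect to require the most care is global existence of this reconstruction on the \emph{entire} interval $(a,b)$: the equation is driven at each instant by a complete left invariant vector field, so its time-dependent flow cannot escape $G$ in finite time and the solution extends over all of $(a,b)$. This completeness is exactly what permits an arbitrary parameter interval here, in contrast to the homogeneous setting of Theorem A, where global existence of the lift $\overline{c}(t)$ requires reductivity or compactness of $H$.
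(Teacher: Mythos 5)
The paper does not actually prove Theorem \ref{theorem-4-3}; it imports it from Theorem D of \cite{Xu2021-1}, so there is no in-paper proof to match against. Your argument is correct and is exactly the computation one expects behind that citation: expressing $\dot{\gamma}(t)$ for the lift $\gamma=(c,\dot c)$ in the left invariant frame, using $\widetilde{U}_iu^j=c^j_{li}u^l$ and the antisymmetry of $c^j_{li}$ to get $\dot\gamma=u^i\widetilde{U}_i+\dot u^i\partial_{u^i}$, and then matching against $\mathbf{G}=\mathbf{G}_0-\mathbf{H}=u^i\widetilde{U}_i-\mathbf{H}^i\partial_{u^i}$ with $\mathbf{H}^i$ a function of the fiber coordinates alone by left invariance. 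This is genuinely different from the route the paper takes for the homogeneous analogue (Theorem \ref{theorem-5-3}), which cannot use a global frame on $G/H$ and instead exploits the variational definition of $\eta$ together with translation by $\overline{c}(t_0)^{-1}$ to reduce every time to $t=0$; your frame computation is more explicit but is only available in the Lie group case.

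The one step you should tighten is the global existence of the reconstruction $\dot c(t)=(L_{c(t)})_*y(t)$, $c(0)=e$, on all of $(a,b)$. Your stated reason --- that each frozen-time field $u^i(t)U_i$ is a complete vector field --- is not by itself sufficient: a time-dependent flow built from complete vector fields can in general escape in finite time. The correct mechanism is left invariance of the equation: if $d(t)$ solves it with $d(t_1)=e$ on an interval around $t_1$, then $g\cdot d(t)$ solves it with value $g$ at $t_1$ on the \emph{same} interval, so the local existence time at each $t_1$ is independent of the initial point; compactness of $[0,t_0]\subset(a,b)$ then gives a uniform lower bound and rules out a maximal interval $[0,t_0)$ with $t_0<b$. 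This is the same gluing argument the paper uses in the compact-$H$ case of Lemma \ref{lemma-5-1}, and it is the honest reason an arbitrary interval $(a,b)$ is admissible here, in contrast to the general homogeneous setting.
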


\begin{theorem}\label{theorem-4-4}
Let $\mathbf{G}=\mathbf{G}_0-\mathbf{H}$ be a left invariant spray structure on a Lie group $G$ with the connection operator $N$, $c(t)$ a smooth curve on $G$ with nowhere-vanishing $\dot{c}(t)$, and
$W(t)$ a smooth vector field along $c(t)$. Denote $y(t)=(L_{c(t)^{-1}})_*(\dot{c}(t))
=u^i(t)e_i\in\mathfrak{g}\backslash\{0\}$ and $w(t)=(L_{c(t)^{-1}})_*(W(t))\in\mathfrak{g}$. Then
\begin{equation*}
D_{\dot{c}(t)}W(t)=(\tfrac{{\rm d}}{{\rm d}t}w^l(t)+\tfrac12
w^j(t)\tfrac{\partial}{\partial u^j}\mathbf{H}^l(c(t),\dot{c}(t))+\tfrac12 w^j(t) u^k(t)
c^l_{kj})U_l(c(t)).
\end{equation*}
In particular, $W(t)$ is linearly parallel along $c(t)$ if and only if
\begin{equation*}
\tfrac{{\rm d}}{{\rm d}t}w(t)+N(y(t),w(t))+[y(t),w(t)]_\mathfrak{g}=0
\end{equation*}
is satisfied everywhere.
\end{theorem}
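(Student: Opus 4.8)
The plan is to work entirely in the left invariant frame $\{U_l,\partial_{u^l}\}$ and to reduce the statement to the local formula for $D$ in Section \ref{subsection-2-2}. Writing $W(t)=w^l(t)U_l(c(t))$ and $\dot c(t)=u^k(t)U_k(c(t))$, so that $y(t)=u^k(t)e_k$ and $w(t)=w^l(t)e_l$, the Leibniz rule reduces the problem to computing the covariant derivative of the frame fields $U_l$ along $c$, together with the obvious term $\tfrac{{\rm d}}{{\rm d}t}w^l\,U_l$. The key idea is to exploit the decomposition $\mathbf{G}=\mathbf{G}_0-\mathbf{H}$: since the connection coefficients $\mathbf{N}^i_j=\partial_{y^j}\mathbf{G}^i$ depend linearly on the spray coefficients, $D$ will split as a ``neutral part'' coming from $\mathbf{G}_0$ plus a correction coming from $\mathbf{H}$, and the two pieces can be handled separately.

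First I would pass $\mathbf{G}=\mathbf{G}_0-\mathbf{H}$ to a standard local coordinate by means of the transformation (\ref{050})--(\ref{051}). Using $\mathbf{G}_0=u^i\widetilde{U}_i$ and $\mathbf{H}=\mathbf{H}^i\partial_{u^i}=\mathbf{H}^i A^j_i\partial_{y^j}$, a short computation gives $\mathbf{G}^k=(\mathbf{G}_0)^k+\tfrac12 A^k_i\mathbf{H}^i$, whence, because the matrix $(A^k_i)=(A^k_i(x))$ depends only on the base point,
\[
\mathbf{N}^k_j=(\mathbf{N}_0)^k_j+\tfrac12 A^k_i\tfrac{\partial}{\partial y^j}\mathbf{H}^i.
\]
Plugging this into the local formula for $D_{\dot c}W$ and converting the coordinate data $W^j=w^m A^j_m$ and $\partial_{y^j}=B^m_j\partial_{u^m}$ back to the invariant frame, the factors $A$ and $B$ cancel and the $\mathbf{H}$-correction collapses to $\tfrac12 w^j(\partial_{u^j}\mathbf{H}^l)(c,\dot c)\,U_l$, which is exactly the $\mathbf{H}$-term in the asserted formula.

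For the neutral part I would use that the canonical affine bi-invariant spray $\mathbf{G}_0$ of Theorem \ref{theorem-4-2} induces the Cartan--Schouten connection $\nabla^0_{U_i}U_j=\tfrac12[U_i,U_j]=\tfrac12 c^l_{ij}U_l$; this is consistent with the $\eta=0$ case of Theorem \ref{theorem-4-3}, whose geodesics are the one-parameter subgroups, and it can also be verified directly by computing $(\mathbf{N}_0)^k_j$ from $(\mathbf{G}_0)^k$ and invoking the structure equation $c^l_{ij}A^q_l=A^p_i\partial_{x^p}A^q_j-A^p_j\partial_{x^p}A^q_i$. This yields $D^0_{\dot c}W=(\tfrac{{\rm d}}{{\rm d}t}w^l+\tfrac12 u^k w^j c^l_{kj})U_l$, and adding the two contributions proves the displayed expression for $D_{\dot c}W$.

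Finally, for the ``in particular'' statement I would invoke left invariance: since $\mathbf{H}=\mathbf{G}_0-\mathbf{G}$ is a left invariant vector field on $TG$ and $\partial_{u^l}$ belongs to the left invariant frame, each component $\mathbf{H}^l$ depends only on the fibre coordinates $u$, so $\mathbf{H}^l(c(t),\dot c(t))=\eta^l(y(t))$ and $w^j\partial_{u^j}\mathbf{H}^l(c,\dot c)=(D\eta(y,w))^l$. Together with $u^k w^j c^l_{kj}=([y,w]_\mathfrak{g})^l$ and the definition $N(y,w)=\tfrac12 D\eta(y,w)-\tfrac12[y,w]_\mathfrak{g}$, the vanishing of $D_{\dot c}W$ rewrites as $\tfrac{{\rm d}}{{\rm d}t}w+\tfrac12 D\eta(y,w)+\tfrac12[y,w]_\mathfrak{g}=\tfrac{{\rm d}}{{\rm d}t}w+N(y,w)+[y,w]_\mathfrak{g}=0$. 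I expect the main obstacle to be the neutral part: producing the coefficient $\tfrac12 c^l_{kj}$ with the correct sign requires carefully converting the coordinate derivative $\tfrac{{\rm d}}{{\rm d}t}W^k$ into the frame derivative $\tfrac{{\rm d}}{{\rm d}t}w^l$, which is exactly where the transition matrices $A,B$ and their base derivatives interact and generate the structure constants; by contrast the $\mathbf{H}$-correction is painless, since $A$ factors out of the $y$-differentiation.
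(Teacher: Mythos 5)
Your proof is correct. Note first that the paper itself does not prove Theorem \ref{theorem-4-4}: it is quoted from Theorem 1.1 and Lemma 3.2 of \cite{Xu2021-2} (see Remark \ref{remark-4-5}), so there is no internal argument to compare against line by line. Your verification is nevertheless sound and self-contained. The splitting $\mathbf{G}^k=(\mathbf{G}_0)^k+\tfrac12 A^k_i\mathbf{H}^i$ and hence $\mathbf{N}^k_j=(\mathbf{N}_0)^k_j+\tfrac12 A^k_i\,\partial_{y^j}\mathbf{H}^i$ is right, the $A$/$B$ cancellation in the $\mathbf{H}$-correction works exactly as you say, and your neutral part computation (using $y^m=u^iA^m_i$ and the structure equation $A^m_i\partial_{x^m}A^k_l-A^m_l\partial_{x^m}A^k_i=c^p_{il}A^k_p$) correctly produces the coefficient $\tfrac12 u^k w^j c^l_{kj}$, consistent with the neutral Cartan--Schouten connection $\nabla^0_{U_i}U_j=\tfrac12[U_i,U_j]$. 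The reduction of the parallelism condition to $\tfrac{{\rm d}}{{\rm d}t}w+N(y,w)+[y,w]_\mathfrak{g}=0$ via left invariance of $\mathbf{H}^l$ and the identity $N(y,w)+[y,w]_\mathfrak{g}=\tfrac12 D\eta(y,w)+\tfrac12[y,w]_\mathfrak{g}$ is also correct. The only stylistic difference from the cited source (and from the machinery this paper recalls, e.g.\ Lemma \ref{lemma-4-8}, which records the horizontal lift $\widetilde{U}_q^{\mathcal H}=\widetilde{U}_q-(\tfrac12\partial_{u^q}\mathbf{H}^i-\tfrac12 u^jc^i_{qj})\partial_{u^i}$ directly in the invariant frame) is that you route the computation through a standard local coordinate and convert back with the transition matrices $A$, $B$, whereas the reference computes the connection coefficients intrinsically in the frame $\{\widetilde{U}_i,\partial_{u^i}\}$; the underlying computation is the same, and your version has the advantage of relying only on the coordinate formula for $D$ from Section \ref{subsection-2-2}.
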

\begin{remark} \label{remark-4-5}
Theorem 1.1 and Lemma 3.2 in \cite{Xu2021-2}
are only stated in the case that $c(t)$ is a geodesic on $(G,\mathbf{G})$, but their proofs are valid for any smooth curve $c(t)$ with nowhere-vanishing $\dot{c}(t)$. So they are summarized and reformulated as Theorem \ref{theorem-4-4} here.
\end{remark}
\begin{theorem}\label{theorem-4-6} Let $\mathbf{G}=\mathbf{G}_0-\mathbf{H}$ be a left invariant spray structure on a Lie group $G$ with the connection operator $N$, $c(t)$ a smooth curve on $G$ and $Y(t)$ a nowhere-vanishing vector field along $c(t)$. Denote $w(t)=
(L_{c(t)^{-1}})_*(\dot{c}(t))$ and $y(t)=(L_{c(t)^{-1}})_*(Y(t))$. Then $Y(t)$ is nonlinearly parallel along $c(t)$ if and only if
\begin{equation*}
\tfrac{{\rm d}}{{\rm d}t}y(t)+N(y(t),w(t))=0
\end{equation*}
is satisfied everywere.
\end{theorem}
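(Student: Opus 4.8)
My plan is to compute the nonlinear covariant derivative $\widetilde{D}_{\dot{c}(t)}Y(t)$ and show that, after left-translation back to $\mathfrak{g}$, it equals $\tfrac{{\rm d}}{{\rm d}t}y(t)+N(y(t),w(t))$. Concretely, I want to establish $\widetilde{D}_{\dot{c}(t)}Y(t)=(L_{c(t)})_*(\tfrac{{\rm d}}{{\rm d}t}y(t)+N(y(t),w(t)))$; since $(L_{c(t)})_*$ is a linear isomorphism and $Y(t)$ is nowhere-vanishing (so that $\widetilde{D}$ is defined), the asserted equivalence follows immediately. I start from the local formula $\widetilde{D}_{\dot{c}}Y=(\tfrac{{\rm d}}{{\rm d}t}Y^i+\dot{c}^j\mathbf{N}^i_j(c,Y))\partial_{x^i}$ of Section \ref{subsection-2-2}, the crucial feature being that here $\mathbf{N}^i_j$ is evaluated at the \emph{field} $Y$ rather than at the velocity $\dot{c}$.

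The key move is to decompose $\mathbf{G}=\mathbf{G}_0-\mathbf{H}$ using Theorem \ref{theorem-4-2} and split the connection coefficients accordingly. Writing $\mathbf{H}=\mathbf{H}^k\partial_{u^k}=\mathbf{H}^kA^i_k\partial_{y^i}$ via (\ref{050}) gives $\mathbf{G}^i=\mathbf{G}^i_0+\tfrac12\mathbf{H}^kA^i_k$, hence $\mathbf{N}^i_j=\mathbf{N}^i_{0,j}+\tfrac12A^i_k\partial_{y^j}\mathbf{H}^k$, so that $\widetilde{D}_{\dot{c}}Y$ splits into a $\mathbf{G}_0$-part and an $\mathbf{H}$-part. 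For the $\mathbf{G}_0$-part: since $\mathbf{G}_0$ is affine, $\mathbf{N}_0$ is linear in its fiber argument, so evaluation at $Y$ reproduces the canonical bi-invariant torsion-free connection $\nabla^0$ with $\nabla^0_{U_i}U_j=\tfrac12[U_i,U_j]$; computing the covariant derivative of $Y(t)=(L_{c(t)})_*(y(t))$ along $c(t)$ in the left-invariant frame yields $(L_{c(t)})_*(\tfrac{{\rm d}}{{\rm d}t}y+\tfrac12[w,y]_\mathfrak{g})$. For the $\mathbf{H}$-part: left-invariance of $\mathbf{H}$ forces its coefficients to depend only on the fiber coordinate $u$, with $\mathbf{H}^k(u)=\eta^k(u)$; using $u=yB(x)$, the identity $\partial_{y^j}=B^l_j\partial_{u^l}$, and the translations $Y^mB^l_m(c)=y^l(t)$ and $\dot{c}^jB^l_j(c)=w^l(t)$, the contraction $\dot{c}^j\partial_{y^j}\mathbf{H}^k|_{(c,Y)}$ collapses to $(D\eta(y,w))^k$, so the $\mathbf{H}$-part equals $(L_{c(t)})_*(\tfrac12D\eta(y,w))$.

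Adding the two contributions and invoking $[w,y]_\mathfrak{g}=-[y,w]_\mathfrak{g}$ together with the definition $N(y,w)=\tfrac12D\eta(y,w)-\tfrac12[y,w]_\mathfrak{g}$ from Section \ref{subsection-4-2} gives $\tfrac12[w,y]_\mathfrak{g}+\tfrac12D\eta(y,w)=N(y,w)$, and hence $\widetilde{D}_{\dot{c}}Y=(L_{c(t)})_*(\tfrac{{\rm d}}{{\rm d}t}y+N(y,w))$, which completes the proof. As a consistency check I note that setting $Y=\dot{c}$, i.e. $y=w$, yields $N(y,y)=\tfrac12D\eta(y,y)=\eta(y)$ by Euler's relation for the positively $2$-homogeneous $\eta$, so the equation reduces to $\dot{y}=-\eta(y)$, recovering the geodesic equation of Theorem \ref{theorem-4-3}; this reflects the general fact that a geodesic is self-parallel in the nonlinear sense.

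The main obstacle is the careful bookkeeping of the transition matrices $A$ and $B$ between the standard fiber coordinates $y$ and the left-invariant fiber coordinates $u$: one must verify precisely that differentiating the $x$-independent coefficients $\mathbf{H}^k(u)$ in the $y$-direction and then contracting with $\dot{c}$ reproduces $D\eta(y,w)$ with the \emph{field's} translate $y$ as base point and the \emph{velocity's} translate $w$ as direction (the asymmetry that distinguishes this theorem from the linearly parallel case of Theorem \ref{theorem-4-4}). A secondary point needing confirmation is that for the affine spray $\mathbf{G}_0$ the nonlinear and linear covariant derivatives coincide, so that the $\mathbf{G}_0$-part reduces cleanly to $\nabla^0$.
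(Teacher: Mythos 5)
Your proposal is correct: the frame bookkeeping all checks out (the $\mathbf{G}_0$-part gives $(L_{c(t)})_*(\tfrac{{\rm d}}{{\rm d}t}y+\tfrac12[w,y]_\mathfrak{g})$, the $\mathbf{H}$-part gives $(L_{c(t)})_*(\tfrac12 D\eta(y,w))$ with the field's translate $y$ as base point and the velocity's translate $w$ as direction, and the two sum to $(L_{c(t)})_*(\tfrac{{\rm d}}{{\rm d}t}y+N(y,w))$), and your two flagged concerns are both easily resolved: the coincidence of linear and nonlinear covariant derivatives for the affine spray $\mathbf{G}_0$ follows from the symmetry of its Christoffel symbols, and the collapse of $\dot{c}^j\partial_{y^j}\mathbf{H}^k|_{(c,Y)}$ to $(D\eta(y,w))^k$ is exactly the computation you describe. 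Your route is, however, not the one the paper indicates. The paper quotes Theorem \ref{theorem-4-6} from \cite{Xu2021-2} and says it is based on Lemma \ref{lemma-4-8}: one uses the second characterization of nonlinear parallelism from Section \ref{subsection-2-2}, namely that nonlinearly parallel fields are integral curves of the horizontal lift $\widetilde{\dot{c}(t)}^{\mathcal H}=\dot{c}^i(t)\delta_{x^i}$ on $S=\cup_t(T_{c(t)}G\backslash\{0\})$, and Lemma \ref{lemma-4-8}(2) already packages the entire frame change into the formula $\widetilde{\dot{c}(t)}^{\mathcal H}=\partial_t+(-\tfrac12 w^i\tfrac{\partial}{\partial u^i}\mathbf{H}^j+\tfrac12 w^iu^pc^j_{pi})\partial_{u^j}$, from which the ODE $\tfrac{{\rm d}}{{\rm d}t}y+N(y,w)=0$ is read off immediately (this is also how the paper proves the homogeneous analogue, Theorem \ref{theorem-6-2}). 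You instead start from the equivalent characterization via the nonlinear covariant derivative $\widetilde{D}$ and redo the transition between the coordinates $(x^i,y^j)$ and $(x^i,u^j)$ by hand through the matrices $A$ and $B$. The net computation is the same change of frame; what your version buys is self-containedness (no appeal to Lemma \ref{lemma-4-8} or to \cite{Xu2021-2}) plus the clean split into a $\mathbf{G}_0$-part and an $\mathbf{H}$-part, at the cost of the $A$, $B$ bookkeeping that the horizontal-lift formula was designed to hide. Your closing consistency check ($y=w$ recovering $\dot{y}=-\eta(y)$ via Euler's relation) is a nice touch not present in the paper.
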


Theorem \ref{theorem-4-6} is based on the following calculation results
which are useful in later discussion (see Lemma 3.2 in \cite{Xu2021-1} and Lemma 4.2 in \cite{Xu2021-2}).

\begin{lemma} \label{lemma-4-8}
(1) The horizonal lifting of $U_q$ is
$\widetilde{U}_q^\mathcal{H}=\widetilde{U}_q-
(\tfrac12\tfrac{\partial}{\partial u^q}\mathbf{H}^i-
\tfrac12u^j c^i_{qj})\partial_{u^i}$.

(2) Denote $S=\cup_{t}(T_{c(t)}G\backslash\{0\})$
a submanifold of $TG\backslash0$ in which $c(t)$ is
a smooth curve in $G$ with nowhere-vanishing $\dot{c}(t)=w^i(t)U_i(c(t))$, and $\{\partial_{t},\partial_{u^i},\cdots,\partial_{u^m}\}$ the
global frame corresponding to the coordinate $(t,u^1,\cdots,u^m)$ on $S$. Then we have
\begin{equation*}
\widetilde{\dot{c}(t)}^\mathcal{H}=\partial_{t}+
(-\tfrac12 w^i(t)\tfrac{\partial}{\partial u^i}\mathbf{H}^j
+\tfrac12 w^i(t)u^p c^j_{pi})\partial_{u^j},
\end{equation*}
at each $(c(t),y)\in S$ with $y=u^i U_i(c(t))\in T_{c(t)}G\backslash\{0\}$.
\end{lemma}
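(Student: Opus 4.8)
The plan is to derive both parts from the decomposition $\mathbf{G}=\mathbf{G}_0-\mathbf{H}$, the transition formulae (\ref{050})--(\ref{051}), and Lemma \ref{lemma-4-1}. The structural fact I will exploit is that the nonlinear connection $\mathbf{N}^i_j=\partial\mathbf{G}^i/\partial y^j$, and hence the horizontal lift $\widetilde{v}^{\mathcal{H}}=a^i\delta_{x^i}$ of $v=a^i\partial_{x^i}$, depends affinely on the spray coefficients $\mathbf{G}^i$. Consequently the horizontal lift for $\mathbf{G}$ splits as the horizontal lift $\widetilde{U}_q^{\mathcal{H}_0}$ for the canonical bi-invariant spray $\mathbf{G}_0$ plus a correction coming entirely from $\mathbf{H}$, and I would compute these two pieces separately and then add them.

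For the $\mathbf{H}$-correction, comparing the vertical parts of $\mathbf{G}=\mathbf{G}_0-\mathbf{H}$ in a standard coordinate gives $\mathbf{G}^i=\mathbf{G}_0^i+\tfrac12\mathbf{H}^kA^i_k$, whence $\mathbf{N}^i_j-(\mathbf{N}_0)^i_j=\tfrac12 A^i_k\,\partial\mathbf{H}^k/\partial y^j$. Converting the $y$-derivative to a $u$-derivative via $\partial u^p/\partial y^j=B^p_j$ and using $A^j_lB^p_j=\delta^p_l$ together with $A^i_k\partial_{y^i}=\partial_{u^k}$, the horizontal lift of $U_l=A^j_l\partial_{x^j}$ picks up exactly the correction $-\tfrac12(\partial\mathbf{H}^k/\partial u^l)\partial_{u^k}$ relative to $\mathbf{G}_0$; this is the routine part. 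For the $\mathbf{G}_0$-piece I would note that $\mathbf{G}_0$ is the geodesic spray of the bi-invariant torsion-free connection $\nabla$ with $\nabla_{U_i}U_j=\tfrac12 c^k_{ij}U_k$ (its geodesics through $e$ are the one-parameter subgroups, cf.\ Theorem \ref{theorem-4-3} with $\eta=0$); being torsion-free, $\nabla$ coincides with the connection the spray induces via Section \ref{subsection-2-2}, so the relevant parallel transport is $\nabla$-parallel transport. Writing the lift as $\widetilde{U}_q+P^k\partial_{u^k}$, the parallel condition $\dot Y^k=-\tfrac12 c^k_{ij}\dot c^iY^j$ forces $u^k$ to vary at rate $-\tfrac12 c^k_{qj}u^j$ along it; comparing with $\widetilde{U}_q u^k=c^k_{lq}u^l$ from Lemma \ref{lemma-4-1}(3) and invoking the antisymmetry of the structure constants pins down $P^k=\tfrac12 u^jc^k_{qj}$, i.e.\ $\widetilde{U}_q^{\mathcal{H}_0}=\widetilde{U}_q+\tfrac12 u^jc^i_{qj}\partial_{u^i}$ (as a check, $u^q\widetilde{U}_q^{\mathcal{H}_0}=\mathbf{G}_0$, the correction vanishing by antisymmetry). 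Adding the two pieces gives (1).

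Part (2) is then a short corollary of (1). Since the horizontal lift is $\mathbb{R}$-linear in its argument and $\dot{c}(t)=w^i(t)U_i(c(t))$, the definition (\ref{033}) gives $\widetilde{\dot{c}(t)}^{\mathcal{H}}=w^q(t)\widetilde{U}_q^{\mathcal{H}}$, into which I substitute (1). It remains to re-express $w^q\widetilde{U}_q$ in the frame $\{\partial_t,\partial_{u^k}\}$ on $S$: since $\partial_t$ is by definition the lift along $c$ holding the fiber coordinates $u^k$ fixed while $\widetilde{U}_q u^k=c^k_{lq}u^l$, both project to $\dot{c}$ on the base and so $w^q\widetilde{U}_q=\partial_t+w^qc^k_{lq}u^l\partial_{u^k}$. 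Collecting the two structure-constant contributions and simplifying with $c^i_{qj}=-c^i_{jq}$ merges them into the single coefficient $\tfrac12 w^iu^pc^j_{pi}$, yielding the stated expression $-\tfrac12 w^i\,\partial\mathbf{H}^j/\partial u^i+\tfrac12 w^iu^pc^j_{pi}$.

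I expect the main obstacle to be the $\mathbf{G}_0$-computation. Because the invariant frame $\{U_i\}$ is anholonomic, one must carefully distinguish the complete lift $\widetilde{U}_q$, which genuinely moves the fiber coordinates via $\widetilde{U}_q u^k=c^k_{lq}u^l$, from the fiber-constant lift encoded by $\partial_t$, and one must correctly identify that the parallel transport induced by the \emph{spray} $\mathbf{G}_0$ is that of the torsion-free $\nabla$ and not of some non-symmetric connection. The delicate point is the sign bookkeeping in the parallel-transport rate combined with the repeated use of the antisymmetry of $c^k_{ij}$, which is exactly the mechanism that turns the naive structure-constant coefficients into the final factor $\tfrac12$; this is where index and sign errors are most likely to arise.
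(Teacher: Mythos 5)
Your proof is correct. Note first that the paper itself does not prove Lemma \ref{lemma-4-8}: it is quoted from Lemma 3.2 of \cite{Xu2021-1} and Lemma 4.2 of \cite{Xu2021-2}, where the analogous derivations are direct coordinate computations — one writes $\mathbf{G}_0^k=-\tfrac12 u^iy^l\,\partial A^k_i/\partial x^l$ from (\ref{050})--(\ref{051}), differentiates to get $(\mathbf{N}_0)^k_j$, and converts the resulting $\partial A/\partial x$ combinations into structure constants via $[U_q,U_i]=c^p_{qi}U_p$. Your treatment of the $\mathbf{H}$-correction is exactly that computation and is verified: $\mathbf{G}^i=\mathbf{G}_0^i+\tfrac12\mathbf{H}^kA^i_k$ and the chain rule $\partial/\partial y^j=B^p_j\,\partial/\partial u^p$ give precisely the term $-\tfrac12(\partial\mathbf{H}^i/\partial u^q)\partial_{u^i}$. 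Where you genuinely diverge is the $\mathbf{G}_0$-piece: instead of differentiating the frame coefficients you identify the linear covariant derivative of the affine spray $\mathbf{G}_0$ with the torsion-free bi-invariant connection $\nabla_{U_i}U_j=\tfrac12 c^k_{ij}U_k$ (legitimate, since both are torsion-free with the same parametrized geodesics $g\exp(ty)$, and for an affine spray the horizontal lift is the infinitesimal parallel transport), and then read off the vertical part of $\widetilde{U}_q^{\mathcal{H}_0}$ from its action on the fiber coordinates, $\widetilde{U}_q^{\mathcal{H}_0}u^k=-\tfrac12 c^k_{qj}u^j$, combined with $\widetilde{U}_qu^k=c^k_{lq}u^l$ from Lemma \ref{lemma-4-1}(3). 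This buys you a coordinate-free determination of the $\tfrac12 u^jc^i_{qj}$ term at the cost of invoking the uniqueness of torsion-free connections with prescribed geodesics; the check $u^q\widetilde{U}_q^{\mathcal{H}_0}=\mathbf{G}_0$ is a good consistency test. Part (2) and the final merging of the two structure-constant terms via antisymmetry into $\tfrac12 w^iu^pc^j_{pi}$ are also correct.
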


Using the left invariant frame $\{\widetilde{U}_i,\partial_{u^i},\forall i\}$ on $TG$, we calculate the S-curvature and Riemann curvature for a left invariant spray structure (see Theorem B and Theorem C in \cite{Xu2021-1}),
which can be translated to the following (see Corollary 4.1 in \cite{Xu2021-1})

\begin{theorem} Let $\mathbf{G}$ be a left invariant spray structure on the Lie group $G$ with the spray vector field $\eta:\mathfrak{g}\backslash\{0\}\rightarrow\mathfrak{g}$ and the connection operator $N:\mathfrak{g}\backslash\{0\}\times\mathfrak{g}
\rightarrow\mathfrak{g}$, then its S-curvature for a left invariant smooth measure and its Riemann curvature satisfy
\begin{eqnarray*}
\mathbf{S}(e,y)&=&\mathrm{Tr}_\mathbb{R}
(N(y,\cdot)+\mathrm{ad}(y)),\quad\mbox{and}\\
\mathbf{R}_y(v)&=&DN(y,v,\eta(y))-N(y,N(y,v))+
N(y,[y,v]_\mathfrak{g})
-[y,N(y,v)]_\mathfrak{g},
\end{eqnarray*}
respectively, for any $y\in\mathfrak{g}\backslash\{0\}$ and $v\in\mathfrak{g}$. Here $DN(y,v,\eta(y))=\tfrac{{\rm d}}{{\rm d}t}|_{t=0}N(y+t\eta(y),v)$.
\end{theorem}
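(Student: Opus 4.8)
The plan is to prove the two formulae separately, in each case reducing the curvature to data along a geodesic or in the global left invariant frame, and then rewriting everything through the definitions $\eta(y)=\mathbf{H}^i(e,y)e_i$ and $N(y,v)=\tfrac12 D\eta(y,v)-\tfrac12[y,v]_\mathfrak{g}$. The S-curvature is short and I would settle it first. Fix a geodesic $c(t)$ with $c(0)=e$ and $\dot c(0)=y$, and use the left invariant frame $\{U_1,\dots,U_m\}$ along it: since $d\mu$ is left invariant, $d\mu(U_1(c(t)),\dots,U_m(c(t)))$ is a nonzero constant, so this frame is admissible for Lemma \ref{lemma-2-2}. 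Writing a linearly parallel field as $W(t)=w^i(t)U_i(c(t))$ and setting $w(t)=w^i(t)e_i$, Theorem \ref{theorem-4-4} gives $\tfrac{\mathrm d}{\mathrm dt}w(t)=-N(y(t),w(t))-[y(t),w(t)]_\mathfrak{g}$ with $y(0)=y$. Hence the operator $A$ of Lemma \ref{lemma-2-2}, which sends $w(0)\mapsto-\tfrac{\mathrm d}{\mathrm dt}|_{t=0}w(t)$, is precisely $A=N(y,\cdot)+\mathrm{ad}(y)$, and $\mathbf{S}(e,y)=\mathrm{Tr}_\mathbb{R}A$ gives the first formula.

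For the Riemann curvature I would use the characterization of Lemma \ref{lemma-2-1}: taking $X=U_k$ left invariant, $\mathbf{R}_y(e_k)$ is the vertical component, modulo $\mathcal{H}$, of $[\mathbf{G},\widetilde{U_k}^{\mathcal{H}}]$ evaluated at $(e,y)$. I would substitute $\mathbf{G}=\mathbf{G}_0-\mathbf{H}=u^i\widetilde U_i-\mathbf{H}^i\partial_{u^i}$ (Theorem \ref{theorem-4-2}) and the horizontal lift $\widetilde{U_k}^{\mathcal{H}}=\widetilde U_k-P_k^l\partial_{u^l}$ of Lemma \ref{lemma-4-8}(1), where $P_k^l=\tfrac12\partial_{u^k}\mathbf{H}^l-\tfrac12 u^jc^l_{kj}$, and then expand the bracket bilinearly using the structural identities $[\widetilde U_i,\widetilde U_j]=c^l_{ij}\widetilde U_l$, $[\widetilde U_i,\partial_{u^l}]=c^p_{il}\partial_{u^p}$, $\widetilde U_i u^j=c^j_{li}u^l$ and $[\partial_{u^i},\partial_{u^l}]=0$ from (\ref{063}) and Lemma \ref{lemma-4-1}. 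A preliminary simplification I expect is that the pure $\mathbf{G}_0$ piece $[u^i\widetilde U_i,\widetilde U_k]$ vanishes identically (the $u^ic^l_{ik}$ terms cancel the $\widetilde U_k u^i$ terms), so that only the $\mathbf{H}$-dependent and mixed contributions survive.

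The delicate point, and the main obstacle, is the reduction modulo $\mathcal{H}$: the bracket produces terms in $\widetilde U_i$, which is \emph{not} horizontal, so each such term must first be rewritten as $\widetilde U_i=\widetilde{U_i}^{\mathcal{H}}+P_i^l\partial_{u^l}$ before the genuinely vertical $\partial_{u^l}$ coefficients are collected; overlooking these $P_i^l$ corrections is the most likely source of error. Carrying this out at $(e,y)$ presents $\mathbf{R}_y(e_k)$ as a combination of $\partial_{u^\cdot}\mathbf{H}^\cdot$, second derivatives $\mathbf{H}^j\partial_{u^j}\partial_{u^k}\mathbf{H}^l$, the values $\mathbf{H}^\cdot(e,y)$ and the structure constants, matching the classical coefficient formula of Section \ref{subsection-2-3}. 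The final, purely algebraic step is to translate via $\mathbf{H}^i(e,y)=\eta^i(y)$, $\partial_{u^j}\mathbf{H}^i(e,y)=D\eta(y,e_j)^i=2N(y,e_j)^i+[y,e_j]_\mathfrak{g}^i$, and the resulting identity $P_k=N(y,e_k)+\mathrm{ad}(y)(e_k)$. Here I expect the $DN(y,v,\eta(y))$ term to emerge exactly from the $\mathbf{H}^j\partial_{u^j}\partial_{u^k}\mathbf{H}^l$ contribution, since $\eta^j\partial_{u^j}(\partial_{u^k}\mathbf{H}^l)(e,y)=2\,DN(y,e_k,\eta(y))^l+[\eta(y),e_k]_\mathfrak{g}^l$ differentiates $N$ in the first slot along $\eta(y)$; the remaining terms reassemble into $-N(y,N(y,v))+N(y,[y,v]_\mathfrak{g})-[y,N(y,v)]_\mathfrak{g}$ with $v=e_k$, and linearity in $v$ then yields the general formula. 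I anticipate the bookkeeping of the vertical coefficients and the correct pairing of each raw term with its $N$-theoretic image to be the most labor-intensive, rather than conceptually hard, part.
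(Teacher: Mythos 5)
Your proposal is correct and follows essentially the same route as the paper: the paper defers this theorem to Theorems B and C of \cite{Xu2021-1}, but its own proofs of Theorem \ref{theorem-6-3} and Theorem \ref{theorem-6-6} are exactly your two arguments specialized to $H=\{e\}$ — Lemma \ref{lemma-2-2} applied to the left invariant frame along a geodesic together with the parallel-translation ODE of Theorem \ref{theorem-4-4} for the S-curvature, and the bracket $[\mathbf{G},\widetilde{U}_k^{\mathcal{H}}]$ expanded in the global frame via Lemma \ref{lemma-2-1}, Theorem \ref{theorem-4-2} and Lemma \ref{lemma-4-8} for the Riemann curvature. You also correctly flag the one genuine pitfall (rewriting the $\widetilde{U}_i$ terms as $\widetilde{U}_i^{\mathcal{H}}+P_i^l\partial_{u^l}$ before reducing mod $\mathcal{H}$), which is precisely the step the paper handles in the analogue of (\ref{104}).
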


In next two sections, we will generalize above theorems for any homogeneous spray manifold.

\section{Homogeneous spray structure and submersion}

In this section, we define the spray vector field and the connection operator and construct the submersion for
a homogeneous spray structure.

\subsection{Homogeneous spray and Finsler geometries}
\label{subsection-5-1}

Let $\mathbf{G}$ be a spray structure on the smooth manifold $M^n$. We call $\mathbf{G}$ a {\it homogeneous} spray structure on $M$, or call $(M,\mathbf{G})$ a {\it homogeneous spray manifold}, if $M$ admits the transitive smooth action of the Lie group $G$, such that $(g_*)_*\mathbf{G}=\mathbf{G}$, $\forall g\in G$, or equivalently, the action of every $g\in G$ on $M$ maps geodesics to geodesics.

Practically, we may identify the homogeneous spray manifold $M$ with a smooth coset space
$G/H$, in which $H$ is the isotropy subgroup at the origin $o=eH$.
Now suppose that $(G/H,\mathbf{G})$ is equipped with a linear decomposition
$
\mathfrak{g}=\mathfrak{h}+\mathfrak{m}
$.
Then the subspace $\mathfrak{m}$ is identified with $T_o(G/H)$. When this decomposition is {\it reductive}, i.e., it is $\mathrm{Ad}(H)$-invariant, the $\mathrm{Ad}(H)$-action on $\mathfrak{m}$ coincides
with the isotropic $H$-action on $T_o(G/H)$.

For
any vector $v\in \mathfrak{g}$, we denote $v_\mathfrak{h}=\mathrm{pr}_\mathfrak{h}(v)$ and
$v_\mathfrak{m}=\mathrm{pr}_\mathfrak{m}(v)$, in which
$\mathrm{pr}_\mathfrak{h}:\mathfrak{g}\rightarrow\mathfrak{h}$ and $\mathrm{pr}_{\mathfrak{m}}:\mathfrak{g}\rightarrow
\mathfrak{m}$ are the linear projections  according to the given decomposition, and we have the operations $[\cdot,\cdot]_\mathfrak{h}=
\mathrm{pr}_{\mathfrak{h}}\circ
[\cdot,\cdot]_\mathfrak{g}$ and
$[\cdot,\cdot]_\mathfrak{m}=
\mathrm{pr}_{\mathfrak{m}}\circ
[\cdot,\cdot]_\mathfrak{g}$. 

A Finsler metric $F$ on a smooth manifold $M$ is called {\it homogeneous} if its isometry group $I(M,F)$ acts transitively on $M$ \cite{De2012}. We also call $(M,F)$ a {\it homogeneous Finsler manifold}. Notice that $I(M,F)$ is a Lie transformation group \cite{DH2002}. The geodesic spray $\mathbf{G}$ of a homogeneous Finsler metric $F$ is also homogeneous. So homogeneous Finsler geometry is a special case of homogeneous spray geometry.

Let $G$ be any closed Lie subgroup of $I(M,F)$, which acts transitively on the homogeneous Finsler manifold $(M,F)$. Then we have the representation $M=G/H$. A reductive decomposition $\mathfrak{g}=\mathfrak{h}+\mathfrak{h}$ can be easily observed as following. Because $G$ is closed in $I(M,F)$, the isotropy subgroup $H$ is compact, so there exists an $\mathrm{Ad}(H)$-invariant inner product on $\mathfrak{g}$. Then the orthogonal decomposition $\mathfrak{g}=\mathfrak{h}+\mathfrak{m}$ with respect to this inner product is reductive. Generally speaking, the reductive decomposition for a homogeneous Finsler manifold $(G/H,F)$ is not unique. The assumption that $G$ is closed in $I(M,F)$ is not essential, but it helps us skip some minor technical chores without loss of many generalities.

On a smooth coset space $G/H$ with a reductive decomposition $\mathfrak{g}=\mathfrak{h}+\mathfrak{m}$, the homogeneous Finsler metric $F$ is one-to-one determined by
its restriction to $T_o(G/H)$, which is an arbitrary $\mathrm{Ad}(H)$-invariant Minkowski norm on $\mathfrak{m}$. For simplicity, we use the same $F$ to denote this Minkowski norm.

In \cite{Hu2015-1}, L. Huang defined the spray vector field
$\eta:\mathfrak{m}\backslash\{0\}\rightarrow\mathfrak{m}$ and the connection operator $N:\mathfrak{m}\backslash{0}\times\mathfrak{m}\rightarrow
\mathfrak{m}$ for a homogeneous Finsler manifold $(G/H,F)$
with a reductive decomposition $\mathfrak{g}=\mathfrak{h}+\mathfrak{m}$ by the following equalities,
\begin{eqnarray*}
g_y(\eta(y),u)&=&g_y(y,[u,y]_\mathfrak{m}),\quad\forall y\in\mathfrak{m}\backslash\{0\},\ u\in\mathfrak{m},\quad\mbox{and}\\
2g_y(N(y,v),u)&=& g_y([u,v]_\mathfrak{m},y)+
g_y([u,y]_\mathfrak{m},v)+g_y([v,y]_\mathfrak{m},y)\nonumber
\\ & &
-2\mathbf{C}_y(u,v,\eta(y)),\quad\forall y\in\mathfrak{m}\backslash\{0\},\ u,v\in\mathfrak{m}.
\end{eqnarray*}
Using these notions, L. Huang presented beautiful homogeneous curvature formulae. For example, his homogeneous S-curvature, Landsberg curvature and Riemann curvatures are the following (see Proposition 4.6, Theorem 4.8 in \cite{Hu2015-1} and Remark \ref{remark-6-10}),
\begin{eqnarray*}
\mathbf{S}(o,y)&=&\mathrm{Tr}_\mathbb{R}(N(y,\cdot)+
\mathrm{ad}_\mathfrak{m}(y)),\\
\mathbf{L}_y(v,v,v)&=&3\mathbf{C}_y(v,v,[v,y]_\mathfrak{m}
-N(y,v))-\mathbf{C}_y(v,v,v,\eta(y)),\\
\mathbf{R}_y(v)&=&[y,[v,y]_\mathfrak{h}]_\mathfrak{m}
+DN(\eta,y,v)-N(y,N(y,v))+N(y,[y,v]_\mathfrak{m})-
[y,N(y,v)]_\mathfrak{m}.
\end{eqnarray*}

At the end of this subsection, we remark that a left invariant
spray structure $\mathbf{G}$ or a left invariant Finsler metric $F$ on a Lie group $G$ is automatically homogeneous, with
$G$ identified with $G/H=G/\{e\}$, which is equipped with the unique decomposition $\mathfrak{g}=\mathfrak{h}+\mathfrak{m}=0+\mathfrak{g}$.
This decomposition is obviously reductive.
See \cite{Hu2015-2} for the curvature properties of a left invariant Finsler metric on a Lie group.
\subsection{Spray vector field and connection operator}
\label{subsection-5-2}

The following lemma is crucial for later discussion.

\begin{lemma} \label{lemma-5-1}
Let $G/H$ be a smooth coset space with a linear decomposition  $\mathfrak{g}=\mathfrak{h}+\mathfrak{m}$. Suppose that $c(t)$ is a smooth curve on $G/H$
which is defined for $t\in(a,b)$ with $a<0<b$, satisfying $c(0)=g\cdot o$ for some $g\in G$ and $\dot{c}(t)\neq0$ everywhere. Then
there exists a unique smooth curve
$\overline{c}(t)$ on $G$, which is defined around  $t=0$, and satisfies
\begin{equation}\label{054}
\overline{c}(0)=g,\quad
c(t)=\overline{c}(t)\cdot o \quad\mbox{and}\quad
(L_{\overline{c}(t)^{-1}})_*
(\dot{\overline{c}}(t))=(\overline{c}(t)^{-1})_*(\dot{c}(t))
\in\mathfrak{m}\backslash0\quad\mbox{everywhere.}
\end{equation}
Further more, $\overline{c}(t)$
can also be defined for $t\in(a,b)$ if one of the following is satisfied:
\begin{enumerate}
\item the decomposition $\mathfrak{g}=\mathfrak{h}+\mathfrak{m}$ is reductive;
\item the subgroup $H$ is compact.
\end{enumerate}
\end{lemma}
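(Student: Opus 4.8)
The plan is to read condition (\ref{054}) as the statement that $\overline{c}(t)$ is the \emph{horizontal lift} of $c(t)$ through $g$ for the left-invariant distribution $\mathcal{L}$ on $G$ whose fiber at $g'$ is $\mathcal{L}_{g'}=(L_{g'})_*(\mathfrak{m})$. Since $\mathfrak{g}=\mathfrak{h}+\mathfrak{m}$, this $\mathcal{L}$ is a smooth distribution complementary to the vertical distribution $\ker\pi_*$, whose fiber at $g'$ is $(L_{g'})_*(\mathfrak{h})$, the tangent space of the fiber $g'H=\pi^{-1}(\pi(g'))$. The requirement $(L_{\overline{c}(t)^{-1}})_*(\dot{\overline{c}}(t))\in\mathfrak{m}\backslash0$ then says exactly that $\dot{\overline{c}}(t)\in\mathcal{L}_{\overline{c}(t)}$, and here the nonvanishing is automatic: $\pi_*$ restricts to a linear isomorphism $\mathcal{L}_{\overline{c}(t)}\to T_{c(t)}(G/H)$ and $\pi_*(\dot{\overline{c}}(t))=\dot{c}(t)\neq0$. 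Local existence and uniqueness of such a lift around $t=0$ is then the standard horizontal-lift statement for a distribution complementary to $\ker\pi_*$, exactly as established for liftings tangent to $\mathcal{L}$ in Section \ref{subsection-3-4}.

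To make the solution explicit and, more importantly, to control its extension, I would reduce (\ref{054}) to an ordinary differential equation on $H$. First choose any smooth lift $\overline{c}_0(t)$ of $c(t)$ with $\overline{c}_0(0)=g$ defined on all of $(a,b)$; this exists because $\pi\colon G\to G/H$ is a fiber bundle and $(a,b)$ is contractible, so $c^{*}G$ is trivial and admits a section. Since the fiber over $c(t)$ is the coset $\overline{c}_0(t)H$, every lift of $c$ has the form $\overline{c}(t)=\overline{c}_0(t)\cdot h(t)$ with a unique smooth $h(t)\in H$, and $\overline{c}(0)=g$ forces $h(0)=e$. Writing $\xi_0(t)=(L_{\overline{c}_0(t)^{-1}})_*(\dot{\overline{c}}_0(t))\in\mathfrak{g}$ and $\zeta(t)=(L_{h(t)^{-1}})_*(\dot{h}(t))\in\mathfrak{h}$, the product rule for the left logarithmic derivative gives $(L_{\overline{c}(t)^{-1}})_*(\dot{\overline{c}}(t))=\mathrm{Ad}(h(t)^{-1})\xi_0(t)+\zeta(t)$. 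Projecting onto $\mathfrak{h}$, horizontality is equivalent to $\zeta(t)=-\mathrm{pr}_{\mathfrak{h}}(\mathrm{Ad}(h(t)^{-1})\xi_0(t))$, i.e.\ to the time-dependent equation $\dot{h}(t)=-(L_{h(t)})_*\,\mathrm{pr}_{\mathfrak{h}}(\mathrm{Ad}(h(t)^{-1})\xi_0(t))$ on $H$ with $h(0)=e$. Existence and uniqueness for this ODE reprove the existence and uniqueness in the first assertion, and reduce everything to showing that the solution survives for all $t\in(a,b)$.

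For the extension I would treat the two hypotheses separately. If $H$ is compact (case (2)), the right-hand side above is a time-dependent vector field on the compact manifold $H$, smooth on $(a,b)\times H$; by the escape lemma its integral curves cannot leave $H$ in finite time, so $h(t)$, hence $\overline{c}(t)=\overline{c}_0(t)h(t)$, is defined on all of $(a,b)$. If the decomposition is reductive (case (1)), then $\mathrm{Ad}(h^{-1})$ preserves the splitting $\mathfrak{g}=\mathfrak{h}+\mathfrak{m}$, so it commutes with $\mathrm{pr}_{\mathfrak{h}}$ and the equation simplifies to the right-invariant ODE $\dot{h}(t)=-(R_{h(t)})_*\beta(t)$ with $\beta(t)=\mathrm{pr}_{\mathfrak{h}}(\xi_0(t))\in\mathfrak{h}$; equivalently, $\mathcal{L}$ is then $\mathrm{Ad}(H)$-invariant, i.e.\ a principal connection on the principal $H$-bundle $G\to G/H$, whose horizontal lifts are automatically defined wherever the base curve is. Concretely, measuring the right-hand side in a right-invariant (hence homogeneous, hence complete) Riemannian metric on $H$ gives $\lvert\dot{h}(t)\rvert=\lvert\beta(t)\rvert$, which is bounded on every compact subinterval of $(a,b)$, so the escape lemma again produces a solution on all of $(a,b)$.

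The main obstacle is precisely this extension step: in general the horizontal lift is only local, because a solution of the fiber ODE may run off to infinity in finite time when $H$ is noncompact \emph{and} $\mathcal{L}$ fails to be $\mathrm{Ad}(H)$-invariant. The role of hypotheses (1) and (2) is exactly to rule this out -- reductivity turns the fiber equation into a right-invariant one, to which completeness of homogeneous metrics applies, while compactness of $H$ leaves no room for escape. The remaining points (smoothness of $\overline{c}$, independence from the auxiliary lift $\overline{c}_0$, and the membership in $\mathfrak{m}\backslash0$) are routine consequences of smooth dependence for ODEs, uniqueness of horizontal lifts, and the isomorphism $\pi_*\colon\mathcal{L}_{\overline{c}(t)}\to T_{c(t)}(G/H)$ noted above.
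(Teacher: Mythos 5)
Your proposal is correct and follows essentially the same route as the paper: both reduce (\ref{054}) to the ODE (\ref{053}) for the fiber component $h(t)\in H$ after writing $\overline{c}(t)$ as a globally defined lift of $c(t)$ times $h(t)$, and both handle the extension to $(a,b)$ by observing that reductivity turns (\ref{053}) into a right-invariant equation on $H$ while compactness of $H$ prevents escape. The only differences are presentational — you phrase the two extension steps via the escape lemma and completeness of a right-invariant metric, where the paper glues local solutions of uniform lifespan in the compact case and defers the reductive case to the argument for Theorem D of \cite{Xu2021-1}.
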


The equality $(L_{\overline{c}(t)^{-1}})_*
(\dot{\overline{c}}(t))=(\overline{c}(t)^{-1})_*(\dot{c}(t))$
in (\ref{054}) is because our convention identifies $T_o(G/H)$ with $\mathfrak{m}$, and then identifies $T_{c(t)}(G/H)=
(\overline{c}(t))_*(T_{o}(G/H))$ for
$c(t)=\overline{c}(t)\cdot o$ with $(L_{\overline{c}(t)})_*(\mathfrak{m})\subset T_{\overline{c}(t)}G$ naturally. See also Fact 2 in Section \ref{subsection-6-1}.\medskip

\begin{proof}
Firstly, we prove the unique existence of $\overline{c}(t)$ for $t$ close to $0$.

There exists a smooth curve $g(t)$ with $t\in(a,b)$ on $G$, satisfying $g(0)=g$ and
$c(t)=g(t)\cdot o$ everywhere. Then the curve $\overline{c}(t)$ indicated in Lemma \ref{lemma-5-1}, if it exists, must be of the form $\overline{c}=g(t)h(t)$,
in which $h(t)$ is a smooth curve in $H$ with $h(0)=e$, for the first two requirements in (\ref{054}), i.e., $\overline{c}(0)=g$ and $c(t)=\overline{c}(t)\cdot o$, to be satisfied.
By the calculation
\begin{eqnarray*}
(L_{\overline{c}(t)^{-1}})_*(\dot{\overline{c}}(t))&=&
(L_{\overline{c}(t)^{-1}})_*
((L_{g(t)})_*(\dot{h}(t))
+(R_{h(t)})_*(\dot{g}(t)))
\\
&=&(L_{\overline{c}(t)^{-1}})_*
((L_{g(t)})_*(\dot{h}(t))
+(L_{\overline{c}(t)^{-1}})_*((R_{h(t)})_*(\dot{g}(t)))\\
&=&(L_{h(t)^{-1}})_*(\dot{h}(t))+\mathrm{Ad}(h(t)^{-1})
((L_{g(t)^{-1}})_*(\dot{g}(t)),
\end{eqnarray*}
we see that the third condition in (\ref{054}), i.e., $(L_{\overline{c}(t)^{-1}})_*
(\dot{\overline{c}}(t))\in\mathfrak{m}\backslash\{0\}$,
is satisfied if and only if $h(t)\in H$ is a solution of the ODE
\begin{equation}\label{053}
\dot{h}(t)+
(L_{h(t)})_*((\mathrm{Ad}(h(t)^{-1})
((L_{g(t)^{-1}})_*(\dot{g}(t))))_\mathfrak{h})
=0.
\end{equation}
The solution $h(t)$ of (\ref{053}) satisfying $h(0)=e$ exists uniquely for $t$ sufficiently close to 0. So the smooth curve $\overline{c}(t)$ on $G$ which satisfies (\ref{054}) exists uniquely around $t=0$.

Nextly, we prove the existence of $\overline{c}(t)$ for $t\in(a,b)$ when the decomposition $\mathfrak{g}=\mathfrak{h}+\mathfrak{m}$ is reductive.
In this case, (\ref{053}) can be simplified as
$\dot{h}(t)=(R_{h(t)})_*(y(t))$, in which
$y(t)=((L_{g(t)^{-1}})_*(\dot{g}(t)))_\mathfrak{h}$ for $t\in(a,b)$ is a given smooth curve in $\mathfrak{h}$. The existence of the solution $h(t)$ for $t\in(a,b)$, satisfying $h(0)=e$, can be similarly proved by the argument proving  Theorem D in \cite{Xu2021-1}. Then the existence of $\overline{c}(t)$ for $t\in(a,b)$ follows immediately.

Finally, we prove the the existence of $\overline{c}(t)$ for $t\in(a,b)$ when $H$ is compact. Let $\epsilon$ be a positive number, which is arbitrarily close to 0. The compactness implies the existence of $\delta>0$, such that for any $t_0\in[a+\epsilon,b-\epsilon]$ and any $h\in H$, there exists a unique solution $h(t)$ of (\ref{053}) for $t\in(t_0-\delta,t_0+\delta)$ satisfying $h(t_0)=h$. Then the solution $h(t)$ of (\ref{053}) with $h(0)=e$ can be extended to $[a+\epsilon,b-\epsilon]$ for any sufficiently small  $\epsilon>0$, by gluing it with other solutions of (\ref{053}).
So we have the existence of $h(t)$, as well as the existence of $\overline{c}(t)$, for $t\in(a,b)$.
\end{proof}\medskip

Let $(G/H,\mathbf{G})$ be a homogeneous spray manifold
with a linear decomposition $\mathfrak{g}=\mathfrak{h}+\mathfrak{m}$.
Suppose that $c_y(t)$ is the geodesic on $(G/H,\mathbf{G})$ with $c(0)=o$ and $\dot{c}(0)=y\in T_{o}(G/H)\backslash\{0\}=\mathfrak{m}\backslash\{0\}$. Denote $\overline{c}_y(t)$ the smooth curve on $G$ provided by Lemma \ref{lemma-5-1}, satisfying $\overline{c}_y(0)=e$,
$c_y(t)=\overline{c}_y(t)\cdot o$ and $y(t)=(L_{\overline{c}_y(t)^{-1}})_*
(\overline{c}_y(t))\in\mathfrak{m}\backslash\{0\}$
for $t$ close to 0. Since $y(t)$ is a smooth curve in $\mathfrak{m}\backslash\{0\}$,
we can define the {\it spray vector field} $\eta:\mathfrak{m}\backslash\{0\}\rightarrow\mathfrak{m}$
by $\eta(y)=-\tfrac{{\rm d}}{{\rm d}t}|_{t=0}y(t)$.

\begin{lemma} \label{lemma-5-2}
The spray vector field $\eta$ is a positively 2-homogeneous smooth map.
\end{lemma}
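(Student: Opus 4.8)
The plan is to treat the two assertions—positive $2$-homogeneity and smoothness—separately, since they rest on different mechanisms. First I would establish homogeneity using the scaling symmetry of geodesics. Because each spray coefficient $\mathbf{G}^i$ is positively $2$-homogeneous in its $y$-entry, the affine reparametrization $t\mapsto\lambda t$ with $\lambda>0$ sends geodesics to geodesics; concretely, if $c_y(t)$ is the geodesic with $c_y(0)=o$ and $\dot c_y(0)=y$, then $t\mapsto c_y(\lambda t)$ is the geodesic with initial velocity $\lambda y$, so by uniqueness $c_{\lambda y}(t)=c_y(\lambda t)$. I would then check that $\hat c(t):=\overline{c}_y(\lambda t)$ satisfies all three requirements of (\ref{054}) for the curve $c_{\lambda y}$: clearly $\hat c(0)=e$ and $\hat c(t)\cdot o=c_{\lambda y}(t)$, while a direct computation gives $(L_{\hat c(t)^{-1}})_*(\dot{\hat c}(t))=\lambda\,y(\lambda t)\in\mathfrak{m}\backslash\{0\}$. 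The uniqueness clause of Lemma \ref{lemma-5-1} then forces $\overline{c}_{\lambda y}(t)=\overline{c}_y(\lambda t)$ and the associated $\mathfrak{m}$-valued curve to be $\lambda\,y(\lambda t)$, whence $\eta(\lambda y)=-\tfrac{{\rm d}}{{\rm d}t}|_{t=0}\lambda\,y(\lambda t)=\lambda^2\eta(y)$.

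For smoothness I would exhibit $y(t)$ as a smooth function of the pair $(t,y)$ near $t=0$ and then differentiate in $t$. The integral curve $(c_y(t),\dot c_y(t))$ of the spray $\mathbf{G}$ through $(o,y)$ depends smoothly on $(t,y)$ by the smooth dependence of a flow on its initial conditions, and on a neighborhood of any $y_0\in\mathfrak{m}\backslash\{0\}$ it is defined for $t$ in a fixed interval about $0$, with $\dot c_y(t)\neq0$ there. Choosing a local smooth section $\sigma$ of $\pi:G\rightarrow G/H$ with $\sigma(o)=e$ and setting $g_y(t)=\sigma(c_y(t))$ produces a curve with $g_y(0)=e$ and $g_y(t)\cdot o=c_y(t)$, depending smoothly on $(t,y)$. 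Solving the structural ODE (\ref{053}) for $h_y(t)$ with $h_y(0)=e$—whose right-hand side depends smoothly on $h$ and, through $g_y$ and $\dot g_y$, on the parameter $y$—yields $h_y(t)$ smooth in $(t,y)$ by smooth dependence of ODE solutions on parameters. Consequently $\overline{c}_y(t)=g_y(t)h_y(t)$ and $y(t)=(L_{\overline{c}_y(t)^{-1}})_*(\dot{\overline{c}}_y(t))$ are smooth in $(t,y)$, so $\eta(y)=-\partial_t|_{t=0}y(t)$ is smooth in $y$.

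The main obstacle is organizing the smoothness argument so that $y$ genuinely enters as a smooth parameter throughout and the domains of definition are uniform. The delicate points are: the construction of the auxiliary family $g_y(t)$ smooth jointly in $(t,y)$ (handled by a fixed local section of $\pi$); the uniform interval of existence near $t=0$ as $y$ ranges over a neighborhood (ensured by the flow theorem together with a compactness argument); and the verification that the coefficients of (\ref{053}) inherit smoothness in $y$, so that the parameter-dependent ODE theorem applies. Once these are in place the differentiation in $t$ at $t=0$ is routine, and I do not expect the homogeneity half to present any difficulty.
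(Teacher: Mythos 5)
Your proposal is correct and follows essentially the same route as the paper: homogeneity via the reparametrization $c_{\lambda y}(t)=c_y(\lambda t)$ combined with the uniqueness of the lift in Lemma \ref{lemma-5-1}, and smoothness via smooth dependence of the geodesic flow and of the solution of the lifting ODE (\ref{053}) on initial data and parameters. The paper states the smoothness step more tersely ("by the theory of ODE"), whereas you spell out the joint smoothness in $(t,y)$ through a local section of $\pi$; this is a useful elaboration but not a different argument.
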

\begin{proof}
By the property of exponential map, wherever $t$ is close to $0$ and $y\neq0$, $c(t,y)=c_y(t)$ is smooth for both $t$ and $y$. By the theory of ODE, $\overline{c}(t,y)=\overline{c}_y(t)$ is also smooth for both $t$ and $y$. So $\eta(y)=-\tfrac{{\rm d}}{{\rm d}t}|_{t=0}(L_{\overline{c}_y(t)^{-1}})_*
(\dot{\overline{c}}_y(t))$ depends smoothly on $y\in\mathfrak{m}\backslash\{0\}$.

For any constant $\lambda>0$, $c_{\lambda y}(t)=c(\lambda t)$ and $\dot{c}_{\lambda y}(t)=\lambda
\dot{c}(\lambda t)$ when $t$ is close to $0$. So we have
\begin{eqnarray*}
\eta(\lambda y)=-\tfrac{{\rm d}}{{\rm d}t}|_{t=0}(L_{\overline{c}_{\lambda y}(t)^{-1}})_*
(\dot{\overline{c}}_{\lambda y}(t))=-\lambda\tfrac{{\rm d}}{{\rm d}t}|_{t=0}
(L_{\overline{c}_y(\lambda t)})_*(\dot{\overline{c}}_y(\lambda t))
=\lambda^2\eta(y),
\end{eqnarray*}
which proves the positive 2-homogeneity of $\eta$.
\end{proof}\medskip

The smoothness of the spray vector field $\eta$
enable us to define the {\it connection operator} $N:\mathfrak{m}\backslash\{0\}\times \mathfrak{m}\rightarrow \mathfrak{m}$ by
$N(y,v)=\tfrac12D\eta(y,v)-\tfrac12[y,v]_\mathfrak{m}$, in which $D\eta(y,v)=\tfrac{{\rm d}}{{\rm d}t}|_{t=0}\eta(y+tv)$.

When we view $\eta$ as a smooth tangent vector field on the manifold $\mathfrak{m}\backslash\{0\}$, we can get the following correspondence between geodesics on $(G/H,\mathbf{G})$ and integral curves of $-\eta$ on $\mathfrak{m}\backslash\{0\}$.

\begin{theorem}\label{theorem-5-3}
Let $(G/H,\mathbf{G})$ be a homogeneous spray manifold with a linear decomposition $\mathfrak{g}=
\mathfrak{h}+\mathfrak{m}$ and $\eta$ the spray vector field. Then we have a one-to-one correspondence between the following two sets:
\begin{enumerate}
\item the set of all geodesics $c(t)$ on $(G/H,\mathbf{G})$, which is defined for $t$ sufficiently close to 0 and satisfies $c(0)=o$;
\item the set of all integral curves $y(t)$ of $-\eta$ on $\mathfrak{m}\backslash\{0\}$, which is defined for $t$ sufficiently close 0.
\end{enumerate}
The correspondence is from $c(t)$ to $y(t)=
(L_{\overline{c}(t)^{-1}})_*(\dot{\overline{c}}(t))$, in which $\overline{c}(t)$ is the smooth curve on $G$ satisfying $\overline{c}(0)=e$, $c(t)=\overline{c}(t)\cdot o$ and $(L_{\overline{c}(t)^{-1}})_*(\dot{\overline{c}}(t))
\in\mathfrak{m}\backslash\{0\}$ for all possible values of $t$.
The range for the parameter $t$ in this correspondence can be changed to an arbitrary interval $(a,b)$ with $a<0<b$ when $\mathfrak{g}=\mathfrak{h}+\mathfrak{m}$ is reductive
or $H$ is compact.
\end{theorem}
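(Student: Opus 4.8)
The plan is to exploit the $G$-invariance of $\mathbf{G}$ to reduce the geodesic equation, read off along the $\mathfrak{m}$-valued curve $y(t)$, to the defining property of $\eta$ at the origin. The central device is a translation trick. Suppose $c(t)$ is a geodesic with $c(0)=o$, let $\overline{c}(t)$ be the lift furnished by Lemma \ref{lemma-5-1}, and set $y(t)=(L_{\overline{c}(t)^{-1}})_*(\dot{\overline{c}}(t))\in\mathfrak{m}\backslash\{0\}$. Fix $t_0$ and consider the translated curve $\hat{c}(s)=\overline{c}(t_0)^{-1}\cdot c(t_0+s)$. Because every element of $G$ maps geodesics to geodesics, $\hat{c}(s)$ is again a geodesic; moreover $\hat{c}(0)=o$ and $\dot{\hat{c}}(0)=(\overline{c}(t_0)^{-1})_*(\dot{c}(t_0))=y(t_0)$, so $\hat{c}$ is precisely the geodesic through $o$ with initial velocity $y(t_0)$ used to define $\eta(y(t_0))$.

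First I would identify the lift of $\hat{c}$. A direct computation shows that $\overline{\hat{c}}(s)=\overline{c}(t_0)^{-1}\overline{c}(t_0+s)$ satisfies all three conditions of Lemma \ref{lemma-5-1} for $\hat{c}$, namely $\overline{\hat{c}}(0)=e$, $\overline{\hat{c}}(s)\cdot o=\hat{c}(s)$, and $(L_{\overline{\hat{c}}(s)^{-1}})_*(\dot{\overline{\hat{c}}}(s))\in\mathfrak{m}\backslash\{0\}$, the last quantity reducing, after cancelling the left translations, to $y(t_0+s)$. By the uniqueness in Lemma \ref{lemma-5-1}, $\overline{\hat{c}}$ is the canonical lift and its associated $\mathfrak{m}$-curve is $s\mapsto y(t_0+s)$. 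The definition of the spray vector field then gives
\[
\eta(y(t_0))=-\tfrac{{\rm d}}{{\rm d}s}\big|_{s=0}y(t_0+s)=-\dot{y}(t_0),
\]
so $y(t)$ is an integral curve of $-\eta$, and the assignment $c\mapsto y$ is well defined.

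For the bijection I would argue purely with ODE uniqueness. Since $\overline{c}(0)=e$ forces $y(0)=\dot{c}(0)$, two geodesics through $o$ with the same associated $y$ share an initial velocity and hence coincide, giving injectivity. For surjectivity, given an integral curve $y(t)$ of $-\eta$ with $y(0)=y_0$, let $c(t)$ be the geodesic through $o$ with $\dot{c}(0)=y_0$; by the forward direction its associated curve is an integral curve of $-\eta$ with the same initial value $y_0$, hence equals $y(t)$ on the common domain. Lemma \ref{lemma-5-2} guarantees the smoothness of $\eta$, so local existence and uniqueness of integral curves applies and the correspondence holds for $t$ near $0$.

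Finally, for the extension to an arbitrary $(a,b)$ under the reductive or compact hypothesis, I would reconstruct $\overline{c}$ directly from $y$. The condition $y(t)=(L_{\overline{c}(t)^{-1}})_*(\dot{\overline{c}}(t))$ is equivalent to the left-invariant ODE $\dot{\overline{c}}(t)=(L_{\overline{c}(t)})_*(y(t))$ with $\overline{c}(0)=e$, whose solution exists on all of $(a,b)$ by the completeness argument used for Theorem D in \cite{Xu2021-1}; equivalently, this extended existence is exactly the reductive and compact cases of Lemma \ref{lemma-5-1}. Setting $c(t)=\overline{c}(t)\cdot o$, the translation trick at each $t_0$ shows $c$ agrees near $t_0$ with the geodesic determined by $y(t_0)$, and since being a geodesic is a local condition, $c$ is a geodesic on all of $(a,b)$. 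I expect the main obstacle to be precisely this last bookkeeping: verifying that the locally constructed geodesics glue into a single geodesic on the full interval, which hinges on the global solvability of the left-invariant ODE and on the uniqueness in Lemma \ref{lemma-5-1} to ensure that the $\mathfrak{m}$-curve recovered at each $t_0$ is consistently $y(t_0+\cdot)$.
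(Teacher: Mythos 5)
Your proposal is correct and follows essentially the same route as the paper: the translation trick $\hat{c}(s)=\overline{c}(t_0)^{-1}\cdot c(t_0+s)$ together with the uniqueness in Lemma \ref{lemma-5-1} for the forward direction, ODE uniqueness for the backward direction, and global solvability of the left-invariant ODE plus locality of the geodesic condition for the extension to $(a,b)$. The only cosmetic difference is that you apply uniqueness to integral curves of $-\eta$ on $\mathfrak{m}\backslash\{0\}$ directly, whereas the paper applies it to the group-level equation $\dot{\overline{c}}(t)=(L_{\overline{c}(t)})_*(y(t))$; these are interchangeable here.
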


\begin{proof}
Firstly, we prove the correspondence from (1) to (2) around $t=0$.

We consider a geodesic
$c(t)$ with $c(0)=o$ on $(G/H,\mathbf{G})$. Let $\overline{c}(t)$ be the smooth curve on $G$ provided by Lemma \ref{lemma-5-1} for $c(t)$, satisfying $\overline{c}(0)=e$,
$c(t)=\overline{c}(t)\cdot o$ and $(L_{\overline{c}(t)^{-1}})_*(\dot{\overline{c}}(t))
\in\mathfrak{m}\backslash\{0\}$, for all possible values of $t$. Then
$y(t)=(L_{\overline{c}(t)^{-1}})_*(\dot{\overline{c}}(t))$
is a smooth curve in $\mathfrak{m}\backslash\{0\}$ defined around $t=0$.

Denote $c_{t_0}(t)=\overline{c}(t_0)^{-1}\cdot c(t+t_0)$ for any $t_0$ where $\overline{c}(t)$ is defined. By the left invariancy of $\mathbf{G}$, $c_{t_0}(t)$ is a geodesic on $(G/H,\mathbf{G})$ satisfying $c_{t_0}(0)=o$. Meanwhile,
$\overline{c}_{t_0}(t)=\overline{c}(t_0)^{-1}\cdot
\overline{c}(t+t_0)$
satisfies $$\overline{c}_{t_0}(0)=e,
\quad{c}_{t_0}(t)=\overline{c}_{t_0}(t)
\cdot o\quad\mbox{and}\quad
(L_{\overline{c}_{t_0}(t)^{-1}})_*(\dot{\overline{c}}_{t_0}(t))
\in\mathfrak{m}$$
for $t$ close to 0, i.e., it is the smooth curve on $G$ provided by Lemma \ref{lemma-5-1} for $c_{t_0}(t)$. It is easy to see that $$y_{t_0}(t)=(L_{\overline{c}_{t_0}(t)^{-1}})_*(\dot{\overline{c}}_{t_0}(t))
=(L_{\overline{c}(t+t_0)^{-1}})_*(\dot{\overline{c}}(t+t_0))
=y(t+t_0),
$$
and in particular $y_{t_0}(0)=y(t_0)$.
So by definition, we get
\begin{eqnarray*}
\eta(y(t_0))=-\tfrac{{\rm d}}{{\rm d}t}|_{t=0}y_{t_0}(t)
=-\tfrac{{\rm d}}{{\rm d}t}|_{t=0}y(t+t_0)=-\tfrac{{\rm d}}{{\rm d}t}{y}(t_0)
\end{eqnarray*}
for each $t_0$, i.e., $y(t)$ is an integral curve of $-\eta$.
The correspondence from (1) to (2) is proved.

Nextly, we prove the backward correspondence around $t=0$.

We consider an integral curve $y(t)$ of $-\eta$, which is defined for $t$ close to 0. Then we have a smooth curve $\overline{c}'(t)$ on $G$ satisfying
\begin{equation}\label{056}
\overline{c}'(0)=e\quad\mbox{and}\quad
\dot{\overline{c}'}(t)=(L_{\overline{c}'(t)})_*(y(t))
\end{equation}
which is defined
around $t=0$.
Let $c(t)$ be the geodesic on $(G/H,\mathbf{G})$ satisfying
$c(0)=o$ and
$\dot{c}(0)=y(0)\in \mathfrak{m}\backslash\{0\}$, and
$\overline{c}(t)$ the smooth curve on $G$ provided by
Lemma \ref{lemma-5-1} for $c(t)$, satisfying
$\overline{c}(0)=e$, $c(t)=\overline{c}(t)\cdot o$ and
$(L_{\overline{c}(t)^{-1}})_*(\dot{\overline{c}}(t))\in\mathfrak{m}$
around $t=0$. We have proved the correspondence from (1) to (2), which guarantees that $\overline{c}(t)$ is also a solution of (\ref{056}).
By the uniqueness for the solution of (\ref{056}), $\overline{c}(t)$ coincides with $\overline{c}'(t)$ for $t$ around $0$, i.e., $c(t)=\overline{c}(t)\cdot o=\overline{c}'(t)\cdot o=c'(t)$ is a geodesic for $t$ around $0$.
The correspondence from (2) to (1) is proved.

Finally, we prove for any interval $(a,b)$ with $a<0<b$, the one-to-one correspondence between
\begin{enumerate}
\item the set of geodesics $c(t)$ with $t\in(a,b)$ on $(G/H,\mathbf{G})$, satisfying $c(0)=o$, and
\item the set of integral curve $y(t)$ with $t\in(a,b)$ of $-\eta$ on $\mathfrak{m}\backslash\{0\}$,
\end{enumerate}
when the decomposition $\mathfrak{g}=\mathfrak{h}+\mathfrak{m}$ is reductive
or $H$ is compact.

From (1) to (2), when the geodesic $c(t)$ is defined for $t\in(a,b)$, the smooth curve $\overline{c}(t)$ on $G$, provided by
Lemma \ref{lemma-5-1} for $c(t)$, satisfying $\overline{c}(0)=e$, $c(t)=\overline{c}(t)\cdot o$ and
$y(t)=(L_{\overline{c}(t)^{-1}})_*(\dot{\overline{c}}(t))
\in\mathfrak{m}$, is also defined for $t\in(a,b)$. So the previous argument shows that $y(t)$ is an integral curve of $-\eta$ on $\mathfrak{m}\backslash\{0\}$, which is defined for $t\in(a,b)$.

From (2) to (1), for the integral curve $y(t)$ of $-\eta$, which is defined for $t\in(a,b)$,
the smooth curve $\overline{c}'(t)$ satisfying (\ref{056}) is also defined for $t\in(a,b)$. The argument is similar to that proving Theorem D in \cite{Xu2021-1}.
Denote $c(t)=\overline{c}'(t)\cdot o$. For each $t_0\in(a,b)$,
similar argument as above can show that
$\overline{c}(t_0)^{-1}\cdot c(t+t_0)
=\overline{c}(t_0)^{-1}\overline{c}(t)\cdot o$ is a geodesic around $t=0$. Using the $G$-invariancy of $\mathbf{G}$, $c(t)$ for $t\in(a,b)$ is a geodesic on $(G/H,\mathbf{G})$.

To summarize, the range of $t$ in the correspondence in Theorem \ref{theorem-5-3} can be an arbitrary interval $(a,b)$ with $a<0<b$. This ends the proof.
\end{proof}\medskip

Comparing Theorem \ref{theorem-5-3} with Theorem \ref{theorem-4-3}, we see immediately that the definitions of spray vector field and connection operator in Section \ref{subsection-5-2} are compatible with
those in Section \ref{subsection-4-2} or \cite{Xu2021-1}, for a left invariant spray structure.
They are also compatible with those defined by L. Huang \cite{Hu2015-1}
(see (\ref{301}) and (\ref{302})), which will be proved  in Section \ref{subsection-5-3} (see Theorem \ref{theorem-5-7}).



\subsection{Construction of the submersion for a homogeneous spray structure}
\label{subsection-5-3}

Let $\eta:\mathfrak{m}\backslash\{0\}\rightarrow\mathfrak{m}$ be the spray vector field of the homogeneous spray manifold $(G/H,\mathbf{G})$ with a linear decomposition $\mathfrak{g}=\mathfrak{h}+\mathfrak{m}$. Using some cut-off function technique, we can extend $\eta$
to a positively 2-homogeneous  smooth map $\overline{\eta}:\mathfrak{g}\backslash\{0\}
\rightarrow\mathfrak{g}$. Then there exists a
left invariant spray structure $\overline{\mathbf{G}}$ on $G$, such that
$\overline{\eta}$ is its spray vector field.

The following theorem claims a submersion between $(G,\overline{\mathbf{G}})$ and $(G/H,\mathbf{G})$.

\begin{theorem}\label{theorem-5-6}
Let $(G/H,\mathbf{G})$ be a homogeneous spray manifold with a linear decomposition $\mathfrak{g}=\mathfrak{h}+\mathfrak{m}$ and $\eta:\mathfrak{m}\backslash\{0\}\rightarrow\mathfrak{m}$ the  spray vector field.
Let $\overline{\mathbf{G}}$ be a left invariant spray structure on $G$ such that its spray vector field $\overline{\eta}:\mathfrak{g}\backslash\{0\}
\rightarrow\mathfrak{g}$ satisfies $\eta=\overline{\eta}|_{\mathfrak{m}\backslash\{0\}}$. Denote $\pi: G\rightarrow G/H$ the smooth map $\pi(g)=g\cdot o$ for all $ g\in G$, and $\mathcal{L}=\cup_{g\in G}(L_g)_*(\mathfrak{m})$ a distribution on $G$.
Then
$(\pi,\mathcal{L})$ is a submersion from $(G,\overline{\mathbf{G}})$ to $(G/H,\mathbf{G})$.
\end{theorem}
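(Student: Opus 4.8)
The plan is to verify, in turn, the two defining conditions for a submersion between spray structures from Section \ref{subsection-3-1}, after first checking the structural prerequisites. For the prerequisites, $\pi:G\to G/H$ is the standard smooth submersion; I would record that $\mathcal{L}$ is a genuine complement distribution. Writing $\tau_g$ for the diffeomorphism $x\mapsto g\cdot x$ of $G/H$, the relation $\pi(gg')=g\cdot\pi(g')$ gives $\pi\circ L_g=\tau_g\circ\pi$, hence $\ker(\pi_*|_g)=(L_g)_*(\mathfrak{h})$, so the linear decomposition $\mathfrak{g}=\mathfrak{h}+\mathfrak{m}$ left-translates to $T_gG=(L_g)_*(\mathfrak{h})\oplus\mathcal{L}_g$ at every $g$. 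Smoothness of $\mathcal{L}$ is immediate from the left-invariant trivialization $TG\cong G\times\mathfrak{g}$, under which $\mathcal{L}\backslash0$ becomes $G\times(\mathfrak{m}\backslash\{0\})$.

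For condition (1), I would argue via flow-invariance: a vector field is tangent to a submanifold exactly when the submanifold is invariant under its local flow, and the flow of $\overline{\mathbf{G}}$ on $TG\backslash0$ is the geodesic flow, whose integral curves are the canonical liftings $(\overline{c}(t),\dot{\overline{c}}(t))$. By Theorem \ref{theorem-4-3}, such a curve corresponds to an integral curve $y(t)=(L_{\overline{c}(t)^{-1}})_*(\dot{\overline{c}}(t))$ of $-\overline{\eta}$, and the point $(\overline{c}(t),\dot{\overline{c}}(t))$ lies in $\mathcal{L}\backslash0$ precisely when $y(t)\in\mathfrak{m}\backslash\{0\}$. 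Since $\overline{\eta}|_{\mathfrak{m}\backslash\{0\}}=\eta$ takes values in $\mathfrak{m}$, the field $-\overline{\eta}$ is tangent to the submanifold $\mathfrak{m}\backslash\{0\}\subset\mathfrak{g}\backslash\{0\}$; hence any integral curve starting in $\mathfrak{m}\backslash\{0\}$ stays there, so $\mathcal{L}\backslash0$ is flow-invariant and $\overline{\mathbf{G}}|_{\mathcal{L}\backslash0}$ is tangent to $\mathcal{L}\backslash0$. (Equivalently, one computes directly from $\overline{\mathbf{G}}=\mathbf{G}_0-\mathbf{H}$: in the left-invariant trivialization the part $\mathbf{G}_0=u^i\widetilde{U}_i$ keeps the $\mathfrak{g}$-coordinate $u$ fixed, while $-\mathbf{H}$ is vertical with value $-\overline{\eta}(u)\in\mathfrak{m}$ when $u\in\mathfrak{m}\backslash\{0\}$.)

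For condition (2), take a geodesic $\overline{c}(t)$ of $(G,\overline{\mathbf{G}})$ whose lifting lies in $\mathcal{L}\backslash0$; I want $c(t)=\pi(\overline{c}(t))$ to be a geodesic of $(G/H,\mathbf{G})$. Because being a geodesic is a pointwise condition and $\mathbf{G}$, $\overline{\mathbf{G}}$, and $\mathcal{L}$ are all left-invariant, it suffices to treat each parameter value after translating by $\overline{c}(t_0)^{-1}$, so I may assume $\overline{c}(0)=e$. Then, by condition (1), $y(t)=(L_{\overline{c}(t)^{-1}})_*(\dot{\overline{c}}(t))\in\mathfrak{m}\backslash\{0\}$ is an integral curve of $-\overline{\eta}=-\eta$. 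Now $\overline{c}(t)$ solves $\dot{\overline{c}}(t)=(L_{\overline{c}(t)})_*(y(t))$ with $\overline{c}(0)=e$, which is exactly the defining ODE (\ref{056}) for the lift attached to $y(t)$ in the backward direction of Theorem \ref{theorem-5-3}; by uniqueness of its solution, $\overline{c}(t)$ coincides with that lift, and Theorem \ref{theorem-5-3} then yields that $c(t)=\overline{c}(t)\cdot o$ is a geodesic of $(G/H,\mathbf{G})$.

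The main obstacle is conceptual rather than computational: it is the compatibility between the two a priori different notions of spray vector field — the left-invariant one for $\overline{\mathbf{G}}$ (Section \ref{subsection-4-2}, Theorem \ref{theorem-4-3}) and the homogeneous one for $\mathbf{G}$ (Section \ref{subsection-5-2}, Theorem \ref{theorem-5-3}). The hypothesis $\eta=\overline{\eta}|_{\mathfrak{m}\backslash\{0\}}$ is precisely what forces $-\overline{\eta}$ to be tangent to $\mathfrak{m}\backslash\{0\}$ (giving condition (1)) and what makes the two lifting ODEs agree (giving condition (2)); once this link is made explicit, both conditions follow from the already-established correspondences in Theorem \ref{theorem-4-3} and Theorem \ref{theorem-5-3} together with uniqueness of solutions of the governing ODE.
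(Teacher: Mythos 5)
Your proposal is correct and follows essentially the same route as the paper: condition (1) is obtained from the flow-invariance of $\mathcal{L}\backslash0$ under the geodesic flow, using Theorem \ref{theorem-4-3} and the fact that $\overline{\eta}|_{\mathfrak{m}\backslash\{0\}}=\eta$ is tangent to $\mathfrak{m}\backslash\{0\}$, and condition (2) is reduced to the backward correspondence in Theorem \ref{theorem-5-3} via the lifting ODE and uniqueness. Your explicit checks that $\mathcal{L}$ is a complement distribution and the left-translation reduction to $\overline{c}(0)=e$ only make explicit steps the paper leaves implicit.
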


\begin{proof}
Let $\overline{c}(t)$ be any geodesic on
$(G,\overline{\mathbf{G}})$, satisfying
 $\dot{\overline{c}}(0)\in\mathcal{L}_{\overline{c}(0)}=
(L_{\overline{c}(0)})_*(\mathfrak{m})$. By the left invariancy of $\mathbf{G}$, Theorem \ref{theorem-4-3} implies that $y(t)=(L_{\overline{c}(t)^{-1}})_*(\dot{\overline{c}}(t))$
is an integral curve of $-\overline{\eta}$. Since the smooth tangent vector field $\overline{\eta}$ is tangent to $\mathfrak{m}\backslash\{0\}$ and $y(0)=(L_{c(0)^{-1}})_*(\dot{\overline{c}}(0))
\in\mathfrak{m}\backslash\{0\}$,
we have $y(t)\in\mathfrak{m}\backslash\{0\}$, i.e.,
$\dot{\overline{c}}(t)\in (L_{\overline{c}(t)})_*(\mathfrak{m}\backslash\{0\})$, $\forall t$.
This argument proves that $\overline{\mathbf{G}}$
is tangent to $\mathcal{L}\subset T(TG\backslash0)$.

Meanwhile, we see that $y(t)$ is also an integral curve of $-\eta$. While proving Theorem \ref{theorem-5-3}, we have showed that  $c(t)=\overline{c}(t)\cdot o=\pi(\overline{c}(t))$ is a geodesic
on $(G/H,\mathbf{G})$.

To summarize, we see by definition that $(\pi,\mathcal{L})$ is a submersion between the spray structures $\overline{\mathbf{G}}$ and $\mathbf{G}$. This ends the proof.
\end{proof}\medskip

Theorem \ref{theorem-5-6} permits the submersion technique in homogeneous spray geometry.
Practically,
we  use a special $\overline{\mathbf{G}}$ in Theorem \ref{theorem-5-6} for the convenience in calculation.
For example, in the proof of Theorem \ref{theorem-5-7} below,
we choose the left invariant spray structure $\overline{\mathbf{G}}$ induced by a canonical construction for Minkowski norms.
In Section 6, we choose another $\overline{\mathbf{G}}$  such that
its spray vector field $\overline{\eta}$ satisfies $\overline{\eta}(y)=\eta(y_\mathfrak{m})$ in a conic open neighborhood of $\mathfrak{m}\backslash\{0\}$. Then a lot of calculation can be simplified.

In the rest of this section, we use the submersion technique to prove two theorems.

One theorem is for an explicit correspondence between a homogenous spray structure and its spray vector field, with respect to a reductive
decomposition $\mathfrak{g}=\mathfrak{h}+\mathfrak{m}$ for $G/H$.


\begin{theorem}\label{theorem-5-5}
For a smooth coset space $G/H$ with
a reductive decomposition $\mathfrak{g}=\mathfrak{h}+\mathfrak{m}$, there is a one-to-one correspondence between the following two sets:
\begin{enumerate}
\item the set of all $G$-invariant spray structures $\mathbf{G}$ on $G/H$;
\item the set of all $\mathrm{Ad}(H)$-invariant smooth maps
$\eta:\mathfrak{m}\backslash\{0\}\rightarrow\mathfrak{m}$.
\end{enumerate}
The correspondence is from $\mathbf{G}$ to its spray vector field with respect to the given decomposition.
\end{theorem}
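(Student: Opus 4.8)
The plan is to establish the bijection in three steps: that the map sending a $G$-invariant spray $\mathbf{G}$ to its spray vector field $\eta$ really lands in set (2), i.e. that $\eta$ is $\mathrm{Ad}(H)$-invariant; then injectivity; then surjectivity. For the first step, fix $h\in H$ and $y\in\mathfrak{m}\backslash\{0\}$, let $c_y(t)$ be the geodesic through $o$ with $\dot c_y(0)=y$, and let $\overline c_y(t)$ be its lift to $G$ provided by Lemma \ref{lemma-5-1}. Since $h$ preserves $\mathbf{G}$ and the decomposition is reductive, the isotropy action of $h$ on $T_o(G/H)=\mathfrak{m}$ is $\mathrm{Ad}(h)$, so $h\cdot c_y(t)$ is the geodesic through $o$ with initial velocity $\mathrm{Ad}(h)y$. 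The key computation is that its canonical lift is the conjugate $C_h(\overline c_y(t))=h\,\overline c_y(t)\,h^{-1}$: because $C_h$ is an automorphism, the left-logarithmic derivative of $C_h\circ\overline c_y$ equals $\mathrm{Ad}(h)$ applied to that of $\overline c_y$, and reductivity keeps this value in $\mathfrak{m}$. By the uniqueness clause of Lemma \ref{lemma-5-1} this conjugate is exactly $\overline c_{\mathrm{Ad}(h)y}(t)$, so differentiating at $t=0$ gives $\eta(\mathrm{Ad}(h)y)=\mathrm{Ad}(h)\eta(y)$. By Lemma \ref{lemma-5-2} the map $\eta$ is automatically positively $2$-homogeneous, so it belongs to set (2).

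Injectivity is immediate from the correspondence already proved: if two $G$-invariant sprays share the spray vector field $\eta$, then by Theorem \ref{theorem-5-3} their geodesics through $o$ are the same integral curves of $-\eta$, hence identical; applying all of $G$ yields the same geodesics through every point, and a spray structure is determined by its geodesics (Section \ref{subsection-2-1}).

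For surjectivity I would start from an $\mathrm{Ad}(H)$-invariant, positively $2$-homogeneous $\eta:\mathfrak{m}\backslash\{0\}\to\mathfrak{m}$ and extend it by a cut-off, chosen $\mathrm{Ad}(H)$-equivariantly in a conic neighborhood of $\mathfrak{m}\backslash\{0\}$, to a positively $2$-homogeneous $\overline\eta:\mathfrak{g}\backslash\{0\}\to\mathfrak{g}$ with $\overline\eta|_{\mathfrak{m}\backslash\{0\}}=\eta$; let $\overline{\mathbf{G}}$ be the associated left invariant spray on $G$. Exactly as in the proof of Theorem \ref{theorem-5-6}, $\overline{\mathbf{G}}$ is tangent to $\mathcal{L}\backslash0$ (this uses only that $\overline\eta|_{\mathfrak{m}\backslash\{0\}}=\eta$ takes values in $\mathfrak{m}$), so $\overline{\mathbf{G}}|_{\mathcal{L}\backslash0}$ is a vector field on $\mathcal{L}\backslash0$. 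I would then define the candidate spray $\mathbf{G}$ on $T(G/H)\backslash0$ as the pushforward of $\overline{\mathbf{G}}|_{\mathcal{L}\backslash0}$ under $\Phi^{\mathcal{L}\backslash0}_{TM\backslash0}$. Positive $2$-homogeneity and smoothness of $\mathbf{G}$ are inherited from $\overline{\mathbf{G}}$; $G$-invariance holds because $\overline{\mathbf{G}}$ is left invariant and $\mathcal{L}$ is $G$-invariant; and once $\mathbf{G}$ is a well-defined spray, its geodesics through $o$ are by construction the projections of the $\mathcal{L}$-tangent geodesics of $\overline{\mathbf{G}}$, so Theorem \ref{theorem-5-3} identifies its spray vector field as $\eta$, while Theorem \ref{theorem-5-6} confirms $(\pi,\mathcal{L})$ is the associated submersion.

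The hard part is the well-definedness of this pushforward, since $\Phi^{\mathcal{L}\backslash0}_{TM\backslash0}$ is far from injective: points of $\mathcal{L}_g\backslash0$ and of $\mathcal{L}_{gh}\backslash0$ lying over the same $(p,v)\in T(G/H)\backslash0$ with $p=go=gho$ must be sent by $\overline{\mathbf{G}}$ to vectors with the same $\Phi^{\mathcal{L}\backslash0}_{TM\backslash0}$-image. Here reductivity enters twice. First, $\mathrm{Ad}(h^{-1})\mathfrak{m}=\mathfrak{m}$ gives $(R_h)_*\mathcal{L}_g=\mathcal{L}_{gh}$, so the two lifts are related by $(R_h)_*$; second, the $\mathrm{Ad}(H)$-invariance of $\eta$ makes $\overline{\mathbf{G}}$ right $H$-invariant along $\mathcal{L}$ (equivalently, the geodesics satisfy $h\cdot c_y=c_{\mathrm{Ad}(h)y}$, by the conjugation computation of step one). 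Combining these with $\pi\circ R_h=\pi$ yields $\Phi^{\mathcal{L}\backslash0}_{TM\backslash0}\circ(R_h)_*=\Phi^{\mathcal{L}\backslash0}_{TM\backslash0}$ on $\mathcal{L}$, hence the required equality of images, so $\overline{\mathbf{G}}|_{\mathcal{L}\backslash0}$ is projectable and descends to a smooth spray $\mathbf{G}$. I expect this projectability check, together with arranging the $\mathrm{Ad}(H)$-equivariant extension so that right $H$-invariance holds where needed, to be the only genuinely delicate point; everything else follows directly from Lemma \ref{lemma-5-1}, Theorem \ref{theorem-5-3}, and the submersion lemmas of Section 3.
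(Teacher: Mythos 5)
Your proposal is correct and follows essentially the same route as the paper: $\mathrm{Ad}(H)$-invariance via the conjugated lift $h\,\overline{c}_y(t)\,h^{-1}$ and the uniqueness clause of Lemma \ref{lemma-5-1}, and surjectivity by extending $\eta$ to a left invariant spray $\overline{\mathbf{G}}$ on $G$ and pushing $\overline{\mathbf{G}}|_{\mathcal{L}\backslash 0}$ down through $(\pi_*)_*$ using right $H$-invariance. Your explicit projectability check and injectivity argument are exactly the ``tedious chores'' the paper declares skipped, so you have if anything supplied more detail than the original.
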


\begin{proof}
Firstly, we consider the correspondence from (1) to (2).  We only need to prove for any homogeneous spray structure $\mathbf{G}$ on $G/H$, its spray vector field $\eta:\mathfrak{m}\backslash\{0\}\rightarrow\mathfrak{m}$ is $\mathrm{Ad}(H)$-invariancy, i.e., $\eta(\mathrm{Ad}(g)y)=\mathrm{Ad}(g)\eta(y)$, $\forall g\in H$, $y\in\mathfrak{m}\backslash\{0\}$.

Let $c(t)$ be the geodesic on $(G/H,\mathbf{G})$ with $c(0)=o$ and $\dot{c}(0)=y\in\mathfrak{m}\backslash\{0\}$.
Denote $\overline{c}(t)$ the smooth curve on $G$ provided by Lemma \ref{lemma-5-1} for $c(t)$, satisfying $\overline{c}(0)=e$, $c(t)=\overline{c}(t)\cdot o$ and
$(L_{\overline{c}(t)^{-1}})_*
(\dot{\overline{c}}(t))\in\mathfrak{m}\backslash\{0\}$ for all $t$ close to 0. Then for any $g\in H$, $c'(t)=g\cdot c(t)$ is also a geodesic with $c(0)=o$ and it satisfies $\dot{c}'(0)=\mathrm{Ad}(g)(y)$. More over, $\overline{c}'(t)=
g\overline{c}(t)g^{-1}$ is the smooth curve satisfying
$\overline{c}'(0)=e$, $c'(t)=\overline{c}'(t)\cdot o$ and
$(L_{c'(t)^{-1}})_*(\dot{c}'(t))=
\mathrm{Ad}(g)((L_{c(t)^{-1}})_*(\dot{c}(t)))\in
\mathfrak{m}\backslash\{0\}$
(the reductive property is needed here) for $t$ close to 0. So we have by definition
\begin{eqnarray*}
\eta(\mathrm{Ad}(g)y)=-\tfrac{{\rm d}}{{\rm d}t}|_{t=0}
((L_{c'(t)^{-1}})_*(\dot{c}'(t)))=
-\tfrac{{\rm d}}{{\rm d}t}|_{t=0}(\mathrm{Ad}(g)
((L_{c(t)^{-1}})_*(\dot{c}(t))))=\mathrm{Ad}(g)(\eta(y)),
\end{eqnarray*}
which proves the $\mathrm{Ad}(H)$-invariancy for $\eta$.

Nextly, we sketch the correspondence from (2) to (1) with some tedious chores skipped.

Let $\eta$ be any $\mathrm{Ad}(H)$-invariant smooth map from $\mathfrak{m}\backslash\{0\}\rightarrow\mathfrak{m}$.
We extend it to a positively 2-homogeneous smooth map $\overline{\eta}:\mathfrak{g}\backslash\{0\}
\rightarrow\mathfrak{g}$. Then there exits a left invariant
spray structure $\overline{\mathbf{G}}$ on $G$, such that
$\overline{\eta}$ is the spray structure of $\overline{\mathbf{G}}$. Theorem \ref{theorem-4-3} implies that $\overline{\mathbf{G}}$ is tangent to $\mathcal{L}=\cup_{g\in G}(L_g)_*(\mathfrak{m})$.
Since the decomposition $\mathfrak{g}=\mathfrak{h}+\mathfrak{m}$ is reductive and $\eta$ is $\mathrm{Ad}(H)$-invariant, both $\mathcal{L}$ and
$\overline{\mathbf{G}}|_{\mathcal{L}\backslash0}$ are right $H$-invariant. Then
$\mathbf{G}=(\pi_*)_*(\overline{\mathbf{G}}
|_{\mathcal{L}\backslash0})$ is a well defined smooth tangent vector field on $T(G/H)\backslash0$. More over, $\mathbf{G}$
is a spray structure on $G/H$, and the pair $(\pi,\mathcal{L})$ is a submersion from
$(G,\overline{\mathbf{G}})$ to $(G/H,\mathbf{G})$. Applying Theorem \ref{theorem-4-3} and Theorem \ref{theorem-5-3} to any geodesic $\overline{c}(t)$ on $(G,\overline{\mathbf{G}})$ tangent to $\mathcal{L}$ and the geodesic
$c(t)=\overline{c}(t)\cdot o$ on $(G/H,\mathbf{G})$,
we see immediately that the spray vector field of $\mathbf{G}$ coincides with $\overline{\eta}|_{\mathfrak{m}\backslash\{0\}}=\eta$.

To summarize, this argument provides the correspondence from (2) to (1). Then the proof ends.
%
\end{proof}\medskip

Denote $\mathbf{G}_0$ the homogeneous spray structure
on $G/H$ with a reductive decomposition $\mathfrak{g}=\mathfrak{h}+\mathfrak{m}$, such that its spray vector field $\eta$ is constantly 0. Then the pair $\pi(g)=g\cdot o$ and $\mathcal{L}=\cup_{g\in G}(L_g)_*(\mathfrak{m})$ is a submersion from the canonical
bi-invariant spray structure $\overline{\mathbf{G}}_0$ (see Theorem \ref{theorem-4-2}) on $G$
to $\mathbf{G}_0=(\pi_*)_*(\overline{\mathbf{G}}_0
|_{\mathcal{L}\backslash0})$. Notice that the geodesics for $\mathbf{G}_0$ coincide with those for the Nomizu connection (see Corollary 2.5 in Chapter 10 of \cite{KN1969}), so we may call $\mathbf{G}_0$ the {\it canonical spray structure} or the {\it spray structure for the Nomizu connection}. However, it should be notified that the covariant derivative of $\mathbf{G}_0$ corresponds to a torsion free connection (for example, the Levi-Civita connection in Riemannian geometry), which is not the Nomizu connection (see Theorem 2.6 in Chapter 10 of \cite{KN1969}).

For any homogeneous spray structure $\mathbf{G}$
on $G/H$ with a reductive decomposition,
we can present the corresponding $\overline{\mathbf{G}}$ in the proof of Theorem \ref{theorem-5-5} as $\overline{\mathbf{G}}=\overline{\mathbf{G}}_0-
\overline{\mathbf{H}}$. Then $\mathbf{H}=(\pi_*)_*(\overline{\mathbf{H}})$ is a well defined $G$-invariant smooth tangent vector fields on $T(G/H)\backslash0$, which is tangent to each $T_x(G/H)\backslash\{0\}$ and $\mathbf{H}|_{T_o(G/H)}=\eta$.
To summarize, we have $$\mathbf{G}=(\pi_*)_*(\overline{\mathbf{G}})=\mathbf{G}_0-
\mathbf{H},$$
which generalizes the representation for a left invariant spray structure in \cite{Xu2021-1,Xu2021-2}.

It should be notified that when the decomposition $\mathfrak{g}=\mathfrak{h}+\mathfrak{m}$ is not reductive, the relation between $\mathbf{G}$ and $\eta$, and the calculation for $\eta$ may be much more complicated.

The other theorem is for the compatibility between the definitions of spray vector field and connection operator in homogeneous spray geometry and those in homogeneous Finsler geometry (see \cite{Hu2015-1} or (\ref{301}) and (\ref{302}) in Section \ref{subsection-5-1}).

\begin{theorem}\label{theorem-5-7}
Let $F$ be a homogeneous Finsler metric on $M=G/H$,
in which $G$ is a closed subgroup of $I(M,F)$,
and $\mathfrak{g}=\mathfrak{h}+\mathfrak{m}$ a reductive decomposition. Then the spray vector field $\eta$ and the connection operator $N$ for the geodesic spray $\mathbf{G}$ of $F$ satisfies
\begin{eqnarray}
g_y(\eta(y),u)&=&g_y(y,[u,y]_\mathfrak{m}),\quad\forall y\in\mathfrak{m}\backslash\{0\},\ u\in\mathfrak{m},\quad\mbox{and}\label{301}\\
2g_y(N(y,v),u)&=& g_y([u,v]_\mathfrak{m},y)+
g_y([u,y]_\mathfrak{m},v)+g_y([v,y]_\mathfrak{m},y)\nonumber
\\ & &
-2\mathbf{C}_y(u,v,\eta(y)),\quad\forall y\in\mathfrak{m}\backslash\{0\},\ u,v\in\mathfrak{m},
\label{302}
\end{eqnarray}
in which the inner product $g_y(\cdot,\cdot)$ and the Cartan tensor $\mathbf{C}_y(\cdot,\cdot,\cdot)$ are for the Minkowski norm $F=F(o,\cdot)$ on $\mathfrak{m}$.
\end{theorem}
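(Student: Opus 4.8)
The plan is to reduce the homogeneous Finsler case on $G/H$ to the left invariant case on $G$ through the submersion of Theorem \ref{theorem-5-6}, applied to a particular total space metric, and then to obtain (\ref{302}) from (\ref{301}) by a purely algebraic differentiation. Since $G$ is closed in $I(M,F)$ the isotropy subgroup $H$ is compact, so I fix an $\mathrm{Ad}(H)$-invariant inner product $Q$ on $\mathfrak{h}$ and define, on a conic neighborhood of $\mathfrak{m}\backslash\{0\}$ in $\mathfrak{g}$, the conic Minkowski norm $\overline{F}$ with $\overline{F}(v)^2=F(v_\mathfrak{m})^2+Q(v_\mathfrak{h},v_\mathfrak{h})$. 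Because $F$ is $\mathrm{Ad}(H)$-invariant and the decomposition is reductive, $\overline{F}$ is $\mathrm{Ad}(H)$-invariant; it restricts to $F$ on $\mathfrak{m}$; and since $\overline{F}^2$ is a sum of a function of $v_\mathfrak{m}$ and a function of $v_\mathfrak{h}$, its Hessian is block diagonal. Hence for every $y$ in the domain $\overline{g}_y(\mathfrak{m},\mathfrak{h})=0$, while for $y\in\mathfrak{m}$ we automatically have $\overline{g}_y|_{\mathfrak{m}\times\mathfrak{m}}=g_y$. Left translating $\overline{F}$ gives a left invariant (conic) Finsler metric on $G$ whose geodesic spray $\overline{\mathbf{G}}$ is the left invariant spray structure to be fed into Theorem \ref{theorem-5-6}.

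Next I would identify $\overline{\eta}|_{\mathfrak{m}\backslash\{0\}}$ with $\eta$. The map $\pi:(G,\overline{F})\rightarrow(G/H,F)$ is a Finslerian submersion with horizontal bundle $\mathcal{L}=\cup_{g\in G}(L_g)_*(\mathfrak{m})$: for $y\in\mathfrak{m}$, left invariance of $\overline{F}$ together with the $G$-invariance of $F$ give $\overline{F}((L_g)_*y)=F(y)=F(g_*y)$, so $\pi_*$ preserves the norm of horizontal vectors. By the theory of Finslerian submersions \cite{PD2001}, horizontality is preserved along $\overline{F}$-geodesics and horizontal geodesics project to $F$-geodesics, while conversely every $F$-geodesic has a horizontal $\overline{F}$-geodesic lift. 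Applied to the $F$-geodesic $c(t)$ with $c(0)=o$ and $\dot{c}(0)=y\in\mathfrak{m}\backslash\{0\}$, this horizontal lift through $e$ satisfies exactly the three defining conditions in (\ref{054}), so by the uniqueness in Lemma \ref{lemma-5-1} it equals $\overline{c}(t)$. Reading off the two definitions of spray vector field, and using Theorem \ref{theorem-4-3} for $\overline{c}(t)$ as an $\overline{F}$-geodesic, gives $\overline{\eta}(y)=-\tfrac{{\rm d}}{{\rm d}t}|_{t=0}(L_{\overline{c}(t)^{-1}})_*(\dot{\overline{c}}(t))=\eta(y)$. Thus $\overline{\eta}|_{\mathfrak{m}\backslash\{0\}}=\eta$, and Theorem \ref{theorem-5-6} confirms $(\pi,\mathcal{L})$ is a submersion.

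With this in place, (\ref{301}) drops out by restriction. For a left invariant Finsler metric the spray vector field obeys the $\mathfrak{m}=\mathfrak{g}$ identity $\overline{g}_y(\overline{\eta}(y),u)=\overline{g}_y(y,[u,y]_\mathfrak{g})$ for all $y\in\mathfrak{g}\backslash\{0\}$ and $u\in\mathfrak{g}$ (this is the left invariant case recorded in \cite{Hu2015-2,Xu2021-1}, and may also be derived from the Euler--Arnold form of the geodesic equation via Theorem \ref{theorem-4-3}). Restricting to $y\in\mathfrak{m}\backslash\{0\}$, $u\in\mathfrak{m}$ and splitting $[u,y]_\mathfrak{g}=[u,y]_\mathfrak{m}+[u,y]_\mathfrak{h}$, the block diagonality $\overline{g}_y(\mathfrak{m},\mathfrak{h})=0$ annihilates the $\mathfrak{h}$-term, while $\overline{\eta}(y)=\eta(y)\in\mathfrak{m}$ and $\overline{g}_y|_\mathfrak{m}=g_y$ turn the remaining identity into $g_y(\eta(y),u)=g_y(y,[u,y]_\mathfrak{m})$, which is (\ref{301}).

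Finally, (\ref{302}) follows from (\ref{301}) by differentiation alone, with no further use of the submersion. Differentiating $g_y(\eta(y),u)=g_y(y,[u,y]_\mathfrak{m})$ in the direction $v$, i.e. replacing $y$ by $y+tv$ and applying $\tfrac{{\rm d}}{{\rm d}t}|_{t=0}$, and using that the $y$-derivative of the fundamental tensor is $2\mathbf{C}_y$ together with $\mathbf{C}_y(y,\cdot,\cdot)=0$, gives $g_y(D\eta(y,v),u)=g_y([u,v]_\mathfrak{m},y)+g_y([u,y]_\mathfrak{m},v)-2\mathbf{C}_y(u,v,\eta(y))$; substituting $D\eta(y,v)=2N(y,v)+[y,v]_\mathfrak{m}$ from the definition of $N$ yields (\ref{302}). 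The step I expect to be the main obstacle is the second one: importing the Finslerian submersion results of \cite{PD2001} in the present conic, left invariant setting---in particular the persistence of horizontality along $\overline{F}$-geodesics and the projection of horizontal geodesics---and matching the resulting lift precisely with the curve of Lemma \ref{lemma-5-1}. The construction of $\overline{F}$ and the bracket bookkeeping are routine once the orthogonality $\overline{g}_y(\mathfrak{m},\mathfrak{h})=0$ is secured.
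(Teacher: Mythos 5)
Your proposal follows essentially the same route as the paper: construct the product-type $\mathrm{Ad}(H)$-invariant Minkowski norm $\overline{F}$ on $\mathfrak{g}$ agreeing with $\sqrt{F(y_\mathfrak{m})^2+Q(y_\mathfrak{h},y_\mathfrak{h})}$ near $\mathfrak{m}\backslash\{0\}$, left-translate it, use the Finslerian submersion with horizontal bundle $\mathcal{L}$ together with the left invariant identity $\overline{g}_y(\overline{\eta}(y),u)=\overline{g}_y(y,[u,y]_\mathfrak{g})$, and then restrict using the block-diagonality of $\overline{g}_y$ with respect to $\mathfrak{g}=\mathfrak{h}+\mathfrak{m}$. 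The only cosmetic difference is that you carry out the differentiation yielding (\ref{302}) explicitly (correctly producing the term $g_y([v,y]_\mathfrak{m},u)$, which by linearity in $u$ is what (\ref{302}) must read), whereas the paper simply cites L. Huang's observation that (\ref{302}) is equivalent to $N(y,v)=\tfrac12 D\eta(y,v)-\tfrac12[y,v]_\mathfrak{m}$.
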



\begin{proof} Firstly, we prove the statement for $\eta$.

Since $G$ is a closed subgroup of $I(M,F)$, its isotropy subgroup $H$ is compact. So we can find an $\mathrm{Ad}(H)$-invariant inner product $\langle\cdot,\cdot\rangle_{\mathrm{bi}}$ on $\mathfrak{h}$.
Using similar argument as in the proof of Lemma 3.4 in \cite{XDHH}, we can construct an $\mathrm{Ad}(H)$-invariant
Minkowski norm $\overline{F}$ on $\mathfrak{g}$, such that
\begin{equation}\label{065}
\overline{F}(y)=\sqrt{\langle y_\mathfrak{h},y_\mathfrak{h}
\rangle_{\mathrm{bi}}+F(y_\mathfrak{m})^2}
\end{equation}
in a conic open neighborhood of $\mathfrak{m}\backslash\{0\}$ in $\mathfrak{g}\backslash\{0\}$. Using left translations, $\overline{F}$ induces a Finsler metric on $G$, which is both left $G$-invariant and right $H$-invariant. For simplicity, we denote this metric as the same $\overline{F}$.
Then Lemma 3.3 and Lemma 3.4 in \cite{XDHH} implies that $\pi:(G,\overline{F})\rightarrow(G/H,F)$ is a Finsler submersion, which horizonal bundle coincides with  $\mathcal{L}=\cup_{g\in G} (L_{g})_*(\mathfrak{m})$.
By the geodesic property of Finsler submersion (see Theorem 3.1 in \cite{PD2001}), $(\pi,\mathcal{L})$ is a submersion between the geodesic spray $\overline{\mathbf{G}}$ of $\overline{F}$ and the geodesic spray $\mathbf{G}$ of $F$.

Theorem 3.1 in \cite{XD2015} provides the following representation for $\overline{\mathbf{G}}$,
\begin{equation}\label{062}
\overline{\mathbf{G}}=v^i\widetilde{V}_i-\tfrac12 g^{il}c^k_{lj}
[F^2]_{v^k}v^j\partial_{v^i}.
\end{equation}
Notice that the right invariant frame $\{V_i,\forall i\}$ on $(G,\overline{F})$ is a Killing frame \cite{XD2014}, and $[V_i,V_j]=-c_{ij}^k V_k$ (see the first line in (\ref{063})) results in a sign difference. By Theorem \ref{theorem-4-2},
\begin{equation}\label{064}
\tfrac12 g^{il}c^k_{lj}
[F^2]_{v^k}v^j\partial_{v^i}=\overline{\mathbf{G}}_0-
\overline{\mathbf{G}}
\end{equation}
is left invariant, in which
$\overline{\mathbf{G}}_0=u^i\widetilde{U}_i=v^i\widetilde{V}_i$ is the canonical affine bi-invariant spray structure on $G$.
When restricted to $T_eG\backslash\{0\}$, $\{u^i,\partial_{u^i},\forall i\}$ coincides with $\{v^i,\partial_{v^i},\forall i\}$, so
the restriction of (\ref{064}) to $T_eG\backslash\{0\}$, i.e.,
the spray vector field $\overline{\eta}:\mathfrak{g}
\backslash\{0\}\rightarrow\mathfrak{g}$ for $\overline{\mathbf{G}}$
satisfies $$\overline{g}_{y}(\overline{\eta}(y),u)=
\overline{g}_{y}(y,[u,y]_\mathfrak{g}),\quad\forall y\in\mathfrak{g}\backslash\{0\},\ u\in\mathfrak{g},$$
in which $\overline{g}_y(\cdot,\cdot)$ is for the Minkowski norm $\overline{F}$ on $\mathfrak{g}$.
Using (\ref{065}), we see that for each $y$ in a conic open neighborhood of $\mathfrak{m}\backslash\{0\}$ in $\mathfrak{g}\backslash\{0\}$ that
$$\overline{g}_y(u,v)=g_{y_\mathfrak{m}}
(u_\mathfrak{m},v_\mathfrak{m})+
\langle u_\mathfrak{h},v_\mathfrak{h}
\rangle_{\mathrm{bi}}.$$
By the reductive property of $\mathfrak{g}=\mathfrak{h}+\mathfrak{m}$ and the $\mathrm{Ad}(H)$-invariancy of the Minkowski norm $F$ on $\mathfrak{m}$, we have
$g_y(y,[u,y])=0$, $\forall y\in\mathfrak{m}\backslash\{0\}$, $u\in\mathfrak{h}$. Then
it is easy to check that
$\overline{\eta}$ maps $\mathfrak{m}\backslash\{0\}$ to $\mathfrak{m}$ and
\begin{equation}\label{066}
g_{y}(\overline{\eta}(y),u)=
g_{y}(y,[u,
y]_\mathfrak{m}),\quad\forall y\in\mathfrak{m}\backslash\{0\}, u\in\mathfrak{m}.
\end{equation}

Let $\eta:\mathfrak{m}\backslash\{0\}\rightarrow\mathfrak{m}$ the spray vector field for $\mathbf{G}$.
For any $y\in\mathfrak{m}\backslash\{0\}$, let $c(t)$ be the geodesic on $(G/H,F)$ which is defined around $t=0$ and satisfies $c(0)=o$ and $\dot{c}(0)=y$.
Let $\overline{c}(t)$ be the smooth curve on $G$ provided by Lemma \ref{lemma-5-1} for $c(t)$, satisfying $\overline{c}(0)=e$, $c(t)=\overline{c}(t)\cdot o$ and
$y(t)=(L_{\overline{c}(t)^{-1}})_*(\dot{\overline{c}}(t))\in \mathfrak{m}\backslash0$ everywhere. Since $\overline{c}(t)$ is a lifting of the geodesic $c(t)$ which is tangent to $\mathcal{L}$, we see that $\overline{c}(t)$ is a geodesic on $(G,\overline{\mathbf{G}})$.
Then by Theorem \ref{theorem-5-3} and Theorem \ref{theorem-4-3}, $y(t)$
is an integral curve of $-\eta$, as well as that of $-\overline{\eta}$. So we have $\eta(y(t))=\overline{\eta}(y(t))$, $\forall t$. In particular, when $t=0$, $\eta(y)=\overline{\eta}(y)$ for any $y\in\mathfrak{m}\backslash0$. By (\ref{066}), the statement for $\eta$ in Theorem \ref{theorem-5-7} is proved.

Nextly, we prove the statement for $N$. L. Huang pointed out
(see (4) in \cite{Hu2017}) that his definition (\ref{302}) for $N$ satisfies
$N(y,v)=\tfrac12D\eta(y,v)-\tfrac12[y,v]_\mathfrak{m}$, in
which $\eta$ is the one in (\ref{301}). So the statement for $N$
in Theorem \ref{theorem-5-7} follows after that for $\eta$ immediately.
\end{proof}

\begin{remark}\label{remark-5-8}
The assumption that the subgroup $G$ is closed in $I(M,F)$ is not necessary for Theorem \ref{theorem-5-7}. We add it to avoid some minor
chores.
\end{remark}

\section{Parallel translation and curvature for a homogeneous spray structure}

In this section, we use the submersion theory in Section 3 to study the geometry of a homogeneous spray structure.

\subsection{Some notations and facts}
\label{subsection-6-1}

The following
assumptions and notations are automatically applied through out this section.

Let $\overline{M}=G$ be an $m$-dimensional Lie group, $H\subset G$ an $(m-n)$-dimensional closed subgroup, and $M=G/H$ the $n$-dimensional smooth coset space. We choose
a linear decomposition $\mathfrak{g}=\mathfrak{h}+\mathfrak{m}$ for $G/H$,
and fix a basis $\{e_1,\cdots,e_m\}$ of $\mathfrak{g}$, such that $e_i\in\mathfrak{m}$ for $1\leq i\leq n$ and $e_\alpha\in\mathfrak{h}$ for $n+1\leq \alpha\leq m$. We apply the convention (\ref{067}) for indices, i.e. $1\leq i,j,k,l,p,q,r\leq n$ and $n+1\leq \alpha,\beta,\gamma\leq m$. So we have the Lie bracket coefficients in
$$[e_i,e_j]_\mathfrak{g}=c^k_{ij}e_k+c^\alpha_{ij}e_\alpha,\cdots,
[e_\alpha,e_\beta]_\mathfrak{g}=c_{\alpha\beta}^ie_i
+c_{\alpha\beta}^\gamma e_\gamma.$$
Notice that every $c_{\alpha\beta}^i$ vanishes because $\mathfrak{h}$ is a Lie subalgebra, and every $c_{\alpha i}^\beta=-c_{i\alpha}^\beta$ vanishes when the
chosen decomposition is reductive.
The corresponding left and right invariant frames are
$\{U_i,\forall i; U_\alpha,\forall\alpha\}$, $\{V_i,\forall i ;V_\alpha,\forall\alpha\}$ on $G$, in which $U_i(e)=V_i(e)=e_i$, $U_\alpha(e)=V_\alpha(e)=e_\alpha$, for each $i$ and $\alpha$, and
$\{\widetilde{U}_i,\partial_{u^i},\forall i;\widetilde{U}_\alpha,
,\partial_{u^\alpha},\forall \alpha\}$,
$\{\widetilde{V}_i,\partial_{u^i},\forall i;\widetilde{V}_\alpha,
\partial_{u^\alpha},\forall \alpha\}$ on $TG$, respectively.

Let $\mathbf{G}$ be a homogeneous spray structure on $G/H$,
with the spray vector field $\eta:\mathfrak{m}\backslash\{0\}
\rightarrow\mathfrak{m}$ and the connection operator
$N(y,w)=\tfrac12D\eta(y,w)-\tfrac12[y,w]_\mathfrak{m}$, $\forall y\in\mathfrak{m}\backslash\{0\}$.
Let $\overline{\mathbf{G}}$ be a left invariant spray structure on $G$, such that its spray vector field
$\overline{\eta}:\mathfrak{g}\backslash\{0\}
\rightarrow\mathfrak{g}$ satisfies
$\overline{\eta}(y)=\eta(y_\mathfrak{m})$  in a conic open neighborhood of $\mathfrak{m}\backslash\{0\}$ in
$\mathfrak{g}\backslash\{0\}$. Then Theorem \ref{theorem-5-6} indicates that the pair $(\pi,\mathcal{L})$, in which the smooth map $\pi:G\rightarrow G/H$ is  $\pi(g)=g\cdot o$ and the distribution $\mathcal{L}$ is
$\mathcal{L}=\cup_{g\in G}(L_g)_*(\mathfrak{m})$, is a submersion between $(G,\overline{\mathbf{G}})$ and $(G/H,\mathbf{G})$.

Here we summarize some obvious facts.\medskip

{\bf Fact 1}\quad{
The distribution $\mathcal{L}$ can be characterized by $u^\alpha=0$, $\forall\alpha$, i.e., any vector in $T_gG$ belongs to $\mathcal{L}_{g}$ if and only if it is of the form $u^iU_i(g)$}.
At each $g\in G$, the kernel of $\pi_*:T_gG\rightarrow T_{g\cdot o}(G/H)$ is $(L_g)_*(\mathfrak{h})$. So for
$\Phi^{T\overline{M}}_{\mathcal{L}}$  (see Section \ref{subsection-3-3} for its definition, same below for other bundle maps), we have
$$\Phi^{T\overline{M}}_{\mathcal{L}}((L_g)_*(y))=
(L_g)_*(y_\mathfrak{m}),\quad  \forall g\in G, y\in \mathfrak{g}=T_gG.$$

{\bf Fact 2}\quad Any vector $y\in \mathfrak{m}\subset \mathfrak{g}=T_eG$ is identified as a tangent vector in $T_o(G/H)$, which is its $\Phi^{\mathcal{L}}_{TM}$-image.
By the left invariancy of $\Phi^{\mathcal{L}}_{TM}$, { we  have $$\Phi^{\mathcal{L}}_{TM}((L_g)_*(v))=g_*(v),\quad\forall g\in G, v\in\mathfrak{m}.$$
Here $v$ in the left side is viewed as a vector in $\mathfrak{m}\subset\mathfrak{g}=T_eG$, and $v$ in the right side is viewed as a vector in $\mathfrak{m}=T_o(G/H)$}.\medskip

{\bf Fact 3}\quad Obviously each $\partial_{u^i}$ is tangent to $\mathcal{L}$. By the observation $\widetilde{V}_i u^j=\widetilde{V}_\alpha u^j=0$ for all $i,j,\alpha$ (see the third line in (\ref{063})), all $\widetilde{V}_i$'s and $\widetilde{V}_\alpha$'s are tangent to $\mathcal{L}$ as well. Counting the dimension, we see that
at each $\overline{p}\in\mathcal{L}\backslash0$,  $$\partial_{u^i}, \widetilde{V}_i,\ \forall i, \mbox{ and }
\widetilde{V}_\alpha, \ \forall \alpha\mbox{, provide a basis for }T_{\overline{p}}(T\mathcal{L}\backslash0).$$

The tangent map $\pi_*:T_gG\rightarrow T_{g\cdot o}(G/H)$ maps to $(L_g)_*(\mathfrak{h})$ to 0, so
$(\pi_*)_*$ maps each $\partial_{u^\alpha}$ to 0. To summarize, at any $\overline{p}\in\mathcal{L}$, $$\Phi^{T(T\overline{M})}_{T\mathcal{L}} \mbox{ maps every }\partial_{u^\alpha} \mbox{ to }0\mbox{, and fixes each } \partial_{u^i},
\widetilde{V}_i\mbox{ and }\widetilde{V}_\alpha.$$

{\bf Fact 4} \quad For the linear sub-bundle $\mathcal{H}'$ of $T(\mathcal{L}\backslash0)$ (see Section \ref{subsection-3-5}, same below for $\mathcal{K}$), its fiber $\mathcal{H}'_{\overline{p}}$ at each
$\overline{p}\in\mathcal{L}_g\backslash0$ consists $\Phi^{T(T\overline{M})}_{T\mathcal{L}}
(\widetilde{v}^{\overline{\mathcal{H}}})$ for all
$v\in \mathcal{L}_g$. So Fact 1 implies that at each $\overline{p}\in\mathcal{L}\backslash0$,
$$\Phi^{T(T\overline{M})}_{T\mathcal{L}}
(\widetilde{U}_i^{\overline{\mathcal{H}}})\mbox{ for all }i\mbox{
provides a basis for }\mathcal{H}'_{\overline{p}}.$$

{\bf Fact 5} \quad For the linear sub-bundle $\mathcal{K}$
of $T(TG)$, its fiber $\mathcal{K}_{\overline{p}}$ at each
$\overline{p}\in TG$ is $\ker (\pi_*)_*$, which can be presented as $\mathrm{span}\{\overline{\partial}_{z^\alpha},\overline{\partial}_{w^\alpha},\forall \alpha\}$ in a standard local coordinate $(x^i,z^\alpha,y^j,w^\beta)$ for $\pi$ (see Section \ref{subsection-3-2}).

From Fact 3,
we have already seen that  $\mathrm{span}\{\partial_{u^\alpha}|_{\overline{p}},\forall\alpha\} =\mathrm{span}\{\overline{\partial}_{w^\beta},\forall \beta\}\subset\mathcal{K}_{\overline{p}}$. Since $z=(z^\alpha)$ is the local coordinate on every $gH$, each $U_\alpha$ is a $C^\infty$-linear combination of $\overline{\partial}_{z^\alpha}$'s.  Then its complete lifting $\widetilde{U}_\alpha$ is a $C^\infty$-linear combination of
$\overline{\partial}_{z^\alpha}$'s and $\overline{\partial}_{w^\beta}$'s. Counting the dimension,
we see that at each $\overline{p}\in\mathcal{L}\backslash0$,
$$\partial_{u^\alpha}\mbox{ and }\widetilde{U}_\alpha\mbox{ for all }\alpha\mbox{ provide a basis of } \mathcal{K}_{\overline{p}}.$$

{\bf Fact 6}\quad
$\overline{\mathbf{G}}$ can be presented as $\overline{\mathbf{G}}=\overline{\mathbf{G}}_0-
\overline{\mathbf{H}}$,
    in which
$\overline{\mathbf{G}}_0=u^i\widetilde{U}_i
+u^\alpha\widetilde{U}_\alpha=v^i\widetilde{V}_i
+v^\alpha\widetilde{V}_\alpha$ is the canonical bi-invariant affine spray structure on $G$, and
$\overline{\mathbf{H}}=\overline{\mathbf{H}}^i\partial_{u^i}
+\overline{\mathbf{H}}^\alpha\partial_{u^\alpha}$ with $\overline{\eta}=\overline{\mathbf{H}}
|_{T_eG\backslash\{0\}}$ and
$\eta=\overline{\mathbf{H}}|_{\mathcal{L}_e\backslash\{0\}}$.
Since $\eta(y)=\eta(y_\mathfrak{m})$ in a conic open neighborhood of $\mathfrak{m}\backslash0$ in $\mathfrak{g}\backslash0$, we see that, there exists an open neighborhood of $\mathcal{L}\backslash0$ in $TG\backslash0$, where
$$
\overline{\mathbf{H}}^\alpha\equiv0,\ \forall\alpha,\mbox{ and }
\tfrac{\partial}{\partial u^\alpha}\overline{\mathbf{H}}^i\equiv0,\ \forall i,\alpha.$$

\subsection{Description for parallel translations}

Let $(G/H,\mathbf{G})$ be a homogeneous spray manifold with the linear decomposition $\mathfrak{g}=\mathfrak{h}+\mathfrak{m}$, $\eta$ the spray vector field and $N$ the connection operator.

Firstly, we consider the linearly parallel translation along a smooth curve $c(t)$ on $G/H$ with $c(0)=o$ and nowhere-vanishing $\dot{c}(t)$. Let $\overline{c}(t)$ be the smooth curve on $G$ provided by Lemma \ref{lemma-5-1} for $c(t)$, satisfying $\overline{c}(0)=e$, $c(t)=\overline{c}(t)\cdot o$ and
$(L_{\overline{c}(t)^{-1}})_*(\dot{\overline{c}}(t))\in\mathfrak{m}$
everywhere. The range of the parameter $t$ is a suitable open interval $(a,b)$ with $a<0<b$, such that both $c(t)$ and $\overline{c}(t)$ are defined.

The following theorem provide the ODE on $\mathfrak{m}$
describing the linearly parallel translation along $c(t)$.

\begin{theorem}\label{theorem-6-1}
Let $(G/H,\mathbf{G})$ be a homogeneous spray manifold with a linear decomposition $\mathfrak{g}=\mathfrak{h}+\mathfrak{m}$,
$c(t)$ any smooth curve on $G/H$ with $c(0)=o$ and nowhere-vanishing $\dot{c}(t)$, and $\overline{c}(t)$ the smooth curve on $G$ satisfying $\overline{c}(0)=e$, $c(t)=\overline{c}(t)\cdot o$ and $y(t)=(L_{\overline{c}(t)^{-1}})_*
(\overline{c}(t))\in\mathfrak{m}\backslash\{0\}$ for each $t$.
Then for any smooth vector field $W(t)=(\overline{c}(t))_*(w(t))$ along $c(t)$,
we have
\begin{equation*}\label{069}
D_{\dot{c}(t)}W(t)=(\overline{c}(t))_*
(\tfrac{{\rm d}}{{\rm d}t}w(t)+N(y(t),w(t))+[y(t),w(t)]_\mathfrak{m}).
\end{equation*}
In particular, $W(t)$ is linearly parallel along $c(t)$ if and only if
$w(t)$ satisfies
$$\tfrac{{\rm d}}{{\rm d}t}w(t)+N(y(t),w(t))+[y(t),w(t)]_\mathfrak{m}=0$$
everywhere.
\end{theorem}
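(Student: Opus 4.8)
The plan is to reduce the claim to the left invariant covariant derivative formula of Theorem \ref{theorem-4-4} via the submersion technique. By Theorem \ref{theorem-5-6}, the pair $(\pi,\mathcal{L})$ with $\mathcal{L}=\cup_{g\in G}(L_g)_*(\mathfrak{m})$ is a submersion from $(G,\overline{\mathbf{G}})$ to $(G/H,\mathbf{G})$, where $\overline{\mathbf{G}}$ is the chosen left invariant spray structure whose spray vector field satisfies $\overline{\eta}(y)=\eta(y_\mathfrak{m})$ in a conic neighborhood of $\mathfrak{m}\backslash\{0\}$. First I would write the lifting of $W(t)$ along $\overline{c}(t)$ in $\mathcal{L}$ explicitly as $\overline{W}(t)=(L_{\overline{c}(t)})_*(w(t))$: it lies in $\mathcal{L}_{\overline{c}(t)}=(L_{\overline{c}(t)})_*(\mathfrak{m})$ and, by Fact 2 of Section \ref{subsection-6-1}, satisfies $\Phi^{\mathcal{L}}_{TM}(\overline{W}(t))=(\overline{c}(t))_*(w(t))=W(t)$, so it is the $\mathcal{L}$-lifting required by Lemma \ref{lemma-3-5}. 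Likewise $(L_{\overline{c}(t)^{-1}})_*(\dot{\overline{c}}(t))=y(t)\in\mathfrak{m}\backslash\{0\}$, so $\dot{\overline{c}}(t)$ is nowhere-vanishing and Theorem \ref{theorem-4-4} applies on $G$.

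Next I would recast the coordinate formula of Theorem \ref{theorem-4-4} in coordinate-free form and apply it to $\overline{\mathbf{G}}$, $\overline{c}(t)$ and $\overline{W}(t)$, obtaining
\begin{equation*}
\overline{D}_{\dot{\overline{c}}(t)}\overline{W}(t)=(L_{\overline{c}(t)})_*\left(\tfrac{{\rm d}}{{\rm d}t}w(t)+\overline{N}(y(t),w(t))+[y(t),w(t)]_\mathfrak{g}\right),
\end{equation*}
where $\overline{N}$ is the connection operator of $\overline{\mathbf{G}}$. The key computation is to relate $\overline{N}$ to $N$ on $\mathfrak{m}$. Since $y+sw\in\mathfrak{m}$ stays in the conic neighborhood where $\overline{\eta}(z)=\eta(z_\mathfrak{m})$, differentiating gives $D\overline{\eta}(y,w)=D\eta(y,w)$ for $y\in\mathfrak{m}\backslash\{0\}$ and $w\in\mathfrak{m}$; comparing $\overline{N}(y,w)=\tfrac12 D\overline{\eta}(y,w)-\tfrac12[y,w]_\mathfrak{g}$ with $N(y,w)=\tfrac12 D\eta(y,w)-\tfrac12[y,w]_\mathfrak{m}$ yields $\overline{N}(y,w)=N(y,w)-\tfrac12[y,w]_\mathfrak{h}$. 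Substituting and regrouping $[y,w]_\mathfrak{g}=[y,w]_\mathfrak{m}+[y,w]_\mathfrak{h}$, I would get
\begin{equation*}
\overline{D}_{\dot{\overline{c}}(t)}\overline{W}(t)=(L_{\overline{c}(t)})_*\left(\tfrac{{\rm d}}{{\rm d}t}w(t)+N(y(t),w(t))+[y(t),w(t)]_\mathfrak{m}+\tfrac12[y(t),w(t)]_\mathfrak{h}\right).
\end{equation*}

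Finally I would descend to $G/H$ using Lemma \ref{lemma-3-5}, i.e. apply $\Phi^{\mathcal{L}}_{TM}\circ\Phi^{T\overline{M}}_{\mathcal{L}}$ to the last display. By Fact 1 of Section \ref{subsection-6-1}, $\Phi^{T\overline{M}}_{\mathcal{L}}((L_g)_*(v))=(L_g)_*(v_\mathfrak{m})$, so the projection onto $\mathcal{L}$ annihilates exactly the $\mathfrak{h}$-valued term $\tfrac12[y(t),w(t)]_\mathfrak{h}$ while fixing the $\mathfrak{m}$-valued remainder; Fact 2 then turns $(L_{\overline{c}(t)})_*$ into $(\overline{c}(t))_*$ on $\mathfrak{m}$, giving the asserted formula for $D_{\dot{c}(t)}W(t)$. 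The ``in particular'' statement is immediate since $(\overline{c}(t))_*$ restricts to a linear isomorphism $\mathfrak{m}\to T_{c(t)}(G/H)$. The one point to watch is the bookkeeping between the $\mathfrak{h}$- and $\mathfrak{m}$-components: the whole mechanism is that the extra $\mathfrak{h}$-term created by passing from $[\cdot,\cdot]_\mathfrak{m}$ on $G/H$ to $[\cdot,\cdot]_\mathfrak{g}$ on $G$ is killed by the horizontal projection $\Phi^{T\overline{M}}_{\mathcal{L}}$, which is exactly why the special choice $\overline{\eta}(y)=\eta(y_\mathfrak{m})$ makes the argument clean.
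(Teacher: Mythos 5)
Your proposal is correct and follows essentially the same route as the paper: lift $W(t)$ to $\overline{W}(t)=(L_{\overline{c}(t)})_*(w(t))$ in $\mathcal{L}$, apply Theorem \ref{theorem-4-4} on $G$, and descend via Lemma \ref{lemma-3-5} together with Facts 1 and 2, with the horizontal projection killing the $\tfrac12[y,w]_\mathfrak{h}$ term. The only cosmetic difference is that you package the key cancellation as the identity $\overline{N}(y,w)=N(y,w)-\tfrac12[y,w]_\mathfrak{h}$ (using $\overline{\eta}(z)=\eta(z_\mathfrak{m})$ near $\mathfrak{m}\backslash\{0\}$), whereas the paper carries out the same computation in the left invariant frame and invokes Fact 6 to discard the $\overline{\mathbf{H}}^\alpha$ terms.
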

\begin{proof} By Fact 2, the lifting of $W(t)=(\overline{c}(t))_*(w(t))$ along $\overline{c}(t)$ in $\mathcal{L}$ is
$$\overline{W}(t)=(L_{\overline{c}(t)})_*(w(t))=
w^i(t)U_i(\overline{c}(t)).$$
For the same reason, we can present $\dot{\overline{c}}(t)$
as
\begin{equation*}
\dot{\overline{c}}(t)=(L_{\overline{c}(t)})_*(y(t))
=u^i(t)U_i(\overline{c}(t)).
\end{equation*}
Denote $\overline{D}$ the linearly covariant derivative on $(G,\overline{\mathbf{G}})$. Then using the first statement in
Theorem \ref{theorem-4-4}, we get
\begin{eqnarray}\label{068}
\overline{D}_{\dot{\overline{c}}(t)}\overline{W}(t)&=&
(\tfrac{{\rm d}}{{\rm d}t}w^l(t)+\tfrac12 w^j(t)
\tfrac{\partial}{\partial u^j}\overline{\mathbf{H}}^l(\overline{c}(t),\dot{\overline{c}}(t))+
\tfrac12w^j(t)u^k(t)c_{kj}^l)U_l(\overline{c}(t))\nonumber\\
& &+(\tfrac12 w^j(t)\tfrac{\partial}{\partial u^j}
\overline{\mathbf{H}}^\alpha(\overline{c}(t),\dot{\overline{c}}(t))+
\tfrac12 w^j(t)u^k(t)c_{kj}^\alpha) U_\alpha(\overline{c}(t))\nonumber\\
&=& (\tfrac{{\rm d}}{{\rm d}t}w^l(t)+\tfrac12 w^j(t)
\tfrac{\partial}{\partial u^j}\overline{\mathbf{H}}^l(\overline{c}(t),\dot{\overline{c}}(t))+
\tfrac12w^j(t)u^k(t)c_{kj}^l)U_l(\overline{c}(t))\nonumber\\
& &+\tfrac12 w^j(t)u^k(t)c_{kj}^\alpha U_\alpha(\overline{c}(t)),
\end{eqnarray}
in which we have used Fact 6 for the second equality, i.e., $\overline{\mathbf{H}}^\alpha$ vanishes around $(\overline{c}(t),\dot{\overline{c}}(t))
\in\mathcal{L}\backslash\{0\}$.
Using the connection operator and Lie bracket of $\mathfrak{g}$, (\ref{068}) can be translated to
$$\overline{D}_{\dot{\overline{c}}(t)}
\overline{W}(t)=(L_{\overline{c}(t)})_*(\tfrac{{\rm d}}{{\rm d}t}w(t)+
N(y(t),w(t))+[y(t),w(t)]_\mathfrak{m}+
\tfrac12[y(t),w(t)]_\mathfrak{h}),$$
Using Fact 1, we get
\begin{equation}\label{303}
\Phi^{T\overline{M}}_\mathcal{L}
(\overline{D}_{\dot{\overline{c}}(t)}
\overline{W}(t))=(L_{\overline{c}(t)})_*(\tfrac{{\rm d}}{{\rm d}t}w(t)+
N(y(t),w(t))+[y(t),w(t)]_\mathfrak{m}).
\end{equation}
By Lemma \ref{lemma-3-5}, $\Phi^{\mathcal{L}}_{TM}(\Phi^{T\overline{M}}_\mathcal{L}
(\overline{D}_{\dot{\overline{c}}(t)}
\overline{W}(t)))=
D_{\dot{c}(t)}W(t)$. Using Fact 2,
we see from (\ref{303}) that
$$D_{\dot{c}(t)}W(t)=(\overline{c}(t))_*
(\tfrac{{\rm d}}{{\rm d}t}w(t)+N(y(t),w(t))+[y(t),w(t)]_\mathfrak{m}),$$
which proves the first statement in Theorem \ref{theorem-6-1}.

The second statement of Theorem \ref{theorem-6-1} follows after the first immediately.
\end{proof}\medskip

Some applications of Theorem \ref{theorem-6-1} will be discussed in Section \ref{subsection-6-2} and Section \ref{subsection-6-3} (see Theorem \ref{theorem-6-3}, Theorem \ref{theorem-6-4} and Corollary \ref{corollary-6-7}).

Nextly, we keep all assumptions and notations for $c(t)$ and $\overline{c}(t)$, and consider the nonlinearly parallel translation. The following theorem provide the wanted ODE on $\mathfrak{m}\backslash0$.

\begin{theorem}\label{theorem-6-2}
Let $(G/H,\mathbf{G})$ be a homogeneous spray manifold with a linear decomposition $\mathfrak{g}=\mathfrak{h}+\mathfrak{m}$,
$c(t)$
any smooth curve on $G/H$ with $c(0)=o$ and nowhere-vanishing $\dot{c}(t)$, and $\overline{c}(t)$ the smooth curve on $G$ satisfying $\overline{c}(0)=e$, $c(t)=\overline{c}(t)\cdot o$ and $w(t)=(L_{\overline{c}(t)^{-1}})_*
(\overline{c}(t))\in\mathfrak{m}\backslash\{0\}$ for each $t$.
Suppose $Y(t)=(\overline{c}(t))_*(y(t))$ is a nowhere-vanishing smooth vector field along $c(t)$. Then $Y(t)$ is nonlinearly parallel
along $c(t)$ if and only if $y(t)$ satisfies
\begin{equation*}
\tfrac{{\rm d}}{{\rm d}t}y(t)+N(y(t),w(t))=0
\end{equation*}
everywhere.
\end{theorem}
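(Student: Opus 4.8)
The plan is to transport the problem to the Lie group $G$ through the submersion $(\pi,\mathcal{L})$ of Theorem \ref{theorem-5-6}, and to reduce it to the already established left invariant statement, Theorem \ref{theorem-4-6}, by an explicit computation in the left invariant frame $\{\widetilde{U}_i,\partial_{u^i}\}$. Recall that $Y(t)$ is nonlinearly parallel along $c(t)$ precisely when $(c(t),Y(t))$ is an integral curve of the horizontal lift $\widetilde{\dot{c}(t)}^{\mathcal{H}}$ on $N=\cup_t(T_{c(t)}(G/H)\backslash\{0\})$. So the whole argument amounts to identifying the integral curves of this field after lifting them to $\mathcal{L}$.

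First I would introduce the submanifold $N_{\mathcal{L}}=\cup_t(\mathcal{L}_{\overline{c}(t)}\backslash\{0\})$ and the diffeomorphism $\Phi^{N_{\mathcal{L}}}_N:N_{\mathcal{L}}\to N$. By Fact 2, the preimage of $Y(t)=(\overline{c}(t))_*(y(t))$ under $\Phi^{\mathcal{L}}_{TM}$ is $\overline{Y}(t)=(L_{\overline{c}(t)})_*(y(t))=y^i(t)U_i(\overline{c}(t))$, whose $U$-frame fiber coordinates are $u^i(t)=y^i(t)$ with $u^\alpha(t)=0$ (Fact 1). Since $\Phi^{N_{\mathcal{L}}}_N$ is a diffeomorphism, $(c(t),Y(t))$ is an integral curve of $\widetilde{\dot{c}(t)}^{\mathcal{H}}$ if and only if $(\overline{c}(t),\overline{Y}(t))$ is an integral curve of $((\Phi^{N_{\mathcal{L}}}_N)^{-1})_*(\widetilde{\dot{c}(t)}^{\mathcal{H}})$ on $N_{\mathcal{L}}$, and by Lemma \ref{lemma-3-6} this lifted field coincides with $\Phi^{T(T\overline{M})|_{\mathcal{L}}}_{T\mathcal{L}}(\widetilde{\dot{\overline{c}}(t)}^{\overline{\mathcal{H}}}|_{N_{\mathcal{L}}})$.

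It then remains to read off the resulting ODE in the $U$-frame. Writing $\dot{\overline{c}}(t)=w^i(t)U_i(\overline{c}(t))$ with $w^\alpha(t)=0$ (as $w(t)\in\mathfrak{m}$), Lemma \ref{lemma-4-8}(2) gives
\begin{equation*}
\widetilde{\dot{\overline{c}}(t)}^{\overline{\mathcal{H}}}
=\partial_t
+(-\tfrac12 w^i\tfrac{\partial}{\partial u^i}\overline{\mathbf{H}}^j
+\tfrac12 w^i u^p c^j_{pi})\partial_{u^j}
+(-\tfrac12 w^i\tfrac{\partial}{\partial u^i}\overline{\mathbf{H}}^\alpha
+\tfrac12 w^i u^p c^\alpha_{pi})\partial_{u^\alpha}.
\end{equation*}
The projection $\Phi^{T(T\overline{M})|_{\mathcal{L}}}_{T\mathcal{L}}$ annihilates every $\partial_{u^\alpha}$ (Fact 3), and along $N_{\mathcal{L}}$ (where $u^\alpha=0$) Fact 6 yields $\overline{\mathbf{H}}^\alpha\equiv0$ and $\tfrac{\partial}{\partial u^\alpha}\overline{\mathbf{H}}^i\equiv0$. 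Hence the projected field is tangent to $N_{\mathcal{L}}$ and its integral curves $u(t)=y(t)\in\mathfrak{m}$ satisfy, for $1\le j\le n$,
\begin{equation*}
\tfrac{{\rm d}}{{\rm d}t}y^j(t)
=-\tfrac12 w^i(t)\tfrac{\partial}{\partial u^i}\overline{\mathbf{H}}^j(\overline{c}(t),y(t))
+\tfrac12 w^i(t)y^p(t)c^j_{pi}.
\end{equation*}
Finally I would invoke the left invariance of $\overline{\mathbf{H}}$ in the $U$-frame, so that $\overline{\mathbf{H}}^j(\overline{c}(t),y(t))=\overline{\eta}^j(y(t))$, together with $\overline{\eta}(y)=\eta(y_\mathfrak{m})$ near $\mathfrak{m}\backslash\{0\}$, to recognise $w^i\tfrac{\partial}{\partial u^i}\overline{\eta}^j=[D\eta(y,w)]^j$ and $w^i y^p c^j_{pi}=[y,w]^j_{\mathfrak{m}}$ for $j\le n$. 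This collapses the system into $\tfrac{{\rm d}}{{\rm d}t}y(t)=-\tfrac12 D\eta(y,w)+\tfrac12[y,w]_\mathfrak{m}=-N(y(t),w(t))$, which is the asserted equation.

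The main obstacle will be the careful bookkeeping that keeps this reduction to $\mathfrak{m}$ consistent: I must verify that the projected horizontal field is genuinely tangent to $N_{\mathcal{L}}$, so that its flow preserves $u^\alpha=0$ and the ODE closes on the $j\le n$ components alone, which is exactly what the special choice of $\overline{\mathbf{G}}$ recorded in Fact 6 guarantees. I must also track the index ranges and the role reversal of $w$ (the velocity) against $y$ (the parallel field) relative to the conventions of Theorem \ref{theorem-4-6} and Lemma \ref{lemma-4-8}, since these roles are interchanged compared with the linearly parallel case treated in Theorem \ref{theorem-6-1}.
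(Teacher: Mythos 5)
Your proposal is correct and follows essentially the same route as the paper's own proof: lift $Y(t)$ to $\overline{Y}(t)=(L_{\overline{c}(t)})_*(y(t))$ in $N_{\mathcal{L}}$ via Fact 2, invoke Lemma \ref{lemma-3-6} to identify the lifted horizontal field with $\Phi^{T(T\overline{M})|_{\mathcal{L}}}_{T\mathcal{L}}(\widetilde{\dot{\overline{c}}(t)}^{\overline{\mathcal{H}}}|_{N_\mathcal{L}})$, compute it with Lemma \ref{lemma-4-8}(2) together with Facts 3 and 6, and translate the resulting ODE into $\tfrac{{\rm d}}{{\rm d}t}y(t)=-N(y(t),w(t))$. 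The bookkeeping concerns you raise (tangency to $N_{\mathcal{L}}$, the role reversal of $y$ and $w$) are exactly the points the paper's argument also relies on, and your handling of them matches.
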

\begin{proof} 
On the submanifold $\overline{N}=\cup_{t}
(T_{\overline{c}(t)}G\backslash\{0\})$, we have the global coordinate $(t,u^1,\cdots,u^m)$ for $y=u^i U_i(\overline{c}(t))+u^\alpha U_\alpha(\overline{c}(t))\in T_{\overline{c}(t)}G\backslash\{0\}$.
The submanifold $N_\mathcal{L}=\cup_t (\mathcal{L}_{\overline{c}(t)}\backslash\{0\})
\subset \overline{N}$ corresponds to  $u^\alpha=0$, $\forall \alpha$.

On $\overline{N}$, we have
the global frame $\{\partial_{t},\partial_{u^1},\cdots,\partial_{u^m}\}$. At each point $y\in N_\mathcal{L}\cap T_{c(t)}G\backslash\{0\}$, $\partial_t$ and all $\partial_{u^i}$'s are tangent to $N_\mathcal{L}$, and each $\partial_{u^\alpha}$ is contained
in $\ker(\pi_*)\cap T_{\overline{c}(t)}G$. So $\Phi^{T(T\overline{M})|_{\mathcal{L}}}_{T\mathcal{L}}$ fixes $\partial_t$ and each $\partial_{u^i}$, and by Fact 3, it maps each $\partial_{u^\alpha}$ to 0.

Denote $w(t)=w^i(t)e_i(t)$, then we have
$\dot{\overline{c}}(t)=(L_{\overline{c}(t)})_*(w(t))=
(L_{\overline{c}(t)})_*(w^i(t)e_i)$.
Using (2) in Lemma \ref{lemma-4-8}, at each $(\overline{c}(t),y)\in N_\mathcal{L}$ with $y=u^i U_i(\overline{c}(t))$, we can get
\begin{eqnarray}\label{070}
\Phi^{T(T\overline{M})|_{\mathcal{L}}}_{T\mathcal{L}}
(\widetilde{\dot{\overline{c}}(t)}^{\overline{\mathcal{H}}}
|_{\mathcal{L}})
&=&\Phi^{T(T\overline{M})|_{\mathcal{L}}}_{T\mathcal{L}}
(\partial_t+(-\tfrac12 w^i(t)\tfrac{\partial}{\partial u^i}\overline{\mathbf{H}}^j(\overline{c}(t),y)+\tfrac12 w^i(t)u^p(t)c^j_{pi})\partial_{u^j}\nonumber\\
& &+
(-\tfrac12 w^i(t)\tfrac{\partial}{\partial u^i}\overline{\mathbf{H}}^\alpha(\overline{c}(t),y)+\tfrac12 w^i(t)u^p c^\alpha_{pi})\partial_{u^\alpha})\nonumber\\
&=&\partial_t+(-\tfrac12 w^i(t)\tfrac{\partial}{\partial u^i}\overline{\mathbf{H}}^j(\overline{c}(t),y)+\tfrac12 w^i(t)u^p(t)c^j_{pi})\partial_{u^j}.
\end{eqnarray}

Now we prove the equivalence in Theorem \ref{theorem-6-2}.

Suppose that $Y(t)=(\overline{c}(t))_*(y(t))$ with $y(t)=u^i(t)e_i$ is a nonlinearly parallel vector field along $c(t)$. Denote $\overline{Y}(t)=(\Phi^{N_\mathcal{L}}_N)^{-1}(Y(t))$. Since $\Phi^{N_\mathcal{L}}_\mathcal{L}=
\Phi^{\mathcal{L}\backslash0}_{TM\backslash0}
|_{N_\mathcal{L}}$,  
$\overline{Y}(t)$ is the
lifting of $Y(t)$ in $\mathcal{L}$ along $\overline{c}(t)$. Then Fact 2 provides
$\overline{Y}(t)=(L_{\overline{c}(t)})_*(y(t))
=(L_{\overline{c}(t)})_*(u^i(t)e_i)$.
If $Y(t)$ and $\overline{Y}(t)$ are viewed as curves on
$N=\cup_{t}(T_{c(t)}(G/H)\backslash\{0\})$ and
$N_\mathcal{L}=\cup_t(\mathcal{L}_{\overline{c}(t)}\backslash\{0\})$ respectively, $Y(t)$ is an integral curve of $\widetilde{\dot{c}(t)}^\mathcal{H}$, and by Lemma \ref{lemma-3-6}, $\overline{Y}(t)=(t,u^1(t),\cdots,u^n(t),0,\cdots,0)$ is an integral curve of the smooth vector field in (\ref{070}).
Then we see the ODE
$$\tfrac{{\rm d}}{{\rm d}t}u^i(t)=-\tfrac12 w^i(t)\tfrac{\partial}{\partial u^i}\overline{\mathbf{H}}^j(\overline{c}(t),y)+\tfrac12 w^i(t)u^p(t)c^j_{pi},\quad\forall i,$$
which can be translated to $$\tfrac{{\rm d}}{{\rm d}t}y(t)=-\tfrac12 D\eta(y(t),w(t))+\tfrac12
[y(t),w(t)]_\mathfrak{m}=-N(y(t),w(t)),$$
using the connection operator and Lie bracket of $\mathfrak{g}$.

To summarize, above argument proves one side of the equivalence, and
the other side can be proved similarly.
\end{proof}

In the special case that $w(t)\equiv w\in\mathfrak{m}\backslash\{0\}$, the smooth tangent vector field $-N(\cdot,w)$ on $\mathfrak{m}\backslash0$ generates a one-parameter subgroup of diffeomorphisms $\rho_t$. Applying Theorem \ref{theorem-6-2}, $\rho_t$ can be explained by
the nonlinearly parallel translation along $c(t)=(\exp tw) \cdot o$ (notice that the corresponding $\overline{c}(t)=\exp tw$ is defined for $t\in\mathbb{R}$), i.e.,

\begin{corollary} \label{corollary-6-6}
Let $(G/H,\mathbf{G})$ be a homogeneous spray manifold with a linear decomposition $\mathfrak{g}=\mathfrak{h}+\mathfrak{m}$, and $N$ the connection operator. Then for any fixed $w\in\mathfrak{m}\backslash\{0\}$, the one-parameter subgroup $\rho_t$ generated by the smooth tangent vector field $N(\cdot,w)$ on $\mathfrak{m}\backslash0$ can be presented
as $\rho_t=(\exp(-tw))_*\circ\mathrm{P}^{nl}_{c;0,t}$, in which
$\mathrm{P}^{nl}_{c;0,t}$ is the nonlinearly parallel translation along $c(t)=(\exp tw)\cdot o$ from $c(0)=o$ to $c(t)$.
\end{corollary}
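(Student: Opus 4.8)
The plan is to verify the asserted identity of maps $\mathfrak{m}\backslash\{0\}\to\mathfrak{m}\backslash\{0\}$ by evaluating the composition $\Psi_t:=(\exp(-tw))_*\circ\mathrm{P}^{\mathrm{nl}}_{c;0,t}$ on an arbitrary vector and recognizing the result as the flow of the connection-operator field $N(\cdot,w)$. So first I would fix $v\in\mathfrak{m}\backslash\{0\}$ and let $Y(t)$ be the nonlinearly parallel vector field along $c(t)=(\exp tw)\cdot o$ with $Y(0)=v$; by the very definition of the nonlinear parallel translation, $\mathrm{P}^{\mathrm{nl}}_{c;0,t}(v)=Y(t)\in T_{c(t)}(G/H)$.

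Here the base velocity is the constant $w$, and the lift supplied by Lemma \ref{lemma-5-1} is globally $\overline{c}(t)=\exp tw$. Writing $Y(t)=(\overline{c}(t))_*(y(t))$ with $y(t)\in\mathfrak{m}\backslash\{0\}$ and $y(0)=v$, I would apply Theorem \ref{theorem-6-2} with $w(t)\equiv w$: this turns the nonlinear parallelism of $Y$ into the autonomous ordinary differential equation for $y(t)$ attached to the connection operator $N(\cdot,w)$ on $\mathfrak{m}\backslash\{0\}$, so that $t\mapsto y(t)$ is the integral curve of the connection-operator field issuing from $v$. Next I would compose with the group action: since $(\overline{c}(t))_*=(\exp tw)_*$ and the $G$-actions of $\exp(-tw)$ and $\exp tw$ on $G/H$ are mutually inverse diffeomorphisms, differentiating the identity $(\exp(-tw))\cdot((\exp tw)\cdot o)=o$ and using Fact 2 (the identification $T_o(G/H)=\mathfrak{m}$ together with left invariance) gives $(\exp(-tw))_*\circ(\exp tw)_*=\mathrm{id}_{\mathfrak{m}}$. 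Hence
\[
\Psi_t(v)=(\exp(-tw))_*(Y(t))=(\exp(-tw))_*(\exp tw)_*(y(t))=y(t).
\]

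Thus $\Psi_t$ sends $v$ to the value at parameter $t$ of the integral curve of the connection-operator field through $v$. Because that field is autonomous, the maps $\Psi_t$ satisfy $\Psi_0=\mathrm{id}$ and the group law $\Psi_{s+t}=\Psi_s\circ\Psi_t$, so $\Psi_t$ is exactly the one-parameter subgroup $\rho_t$ generated by $N(\cdot,w)$, which is the claim. I expect the main obstacle to be the careful handling of the several differentials of the $G$-action on $G/H$ and the identifications $T_o(G/H)=\mathfrak{m}$ and $T_{c(t)}(G/H)=(\overline{c}(t))_*(\mathfrak{m})$ that are built into Theorem \ref{theorem-6-2} — in particular checking the collapse $(\exp(-tw))_*\circ(\exp tw)_*=\mathrm{id}$ at the moving base point $c(t)$ rather than at $o$, and lining up the sign and orientation conventions of the differential equation in Theorem \ref{theorem-6-2} with the generator $N(\cdot,w)$ so that the parameter of $\rho_t$ matches that of $\mathrm{P}^{\mathrm{nl}}_{c;0,t}$.
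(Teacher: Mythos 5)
Your proposal follows the paper's own derivation exactly: the paper obtains this corollary in the paragraph immediately preceding it by specializing Theorem \ref{theorem-6-2} to $w(t)\equiv w$ with the global lift $\overline{c}(t)=\exp tw$, which is precisely your argument, including the identification $(\exp(-tw))_*(Y(t))=y(t)$ via Fact 2 and the group law from autonomy of the ODE. The one point you flagged but left open does need resolving, and it resolves against the literal statement: Theorem \ref{theorem-6-2} gives $\tfrac{{\rm d}}{{\rm d}t}y(t)=-N(y(t),w)$, so $(\exp(-tw))_*\circ\mathrm{P}^{\mathrm{nl}}_{c;0,t}$ is the flow of $-N(\cdot,w)$ rather than of $N(\cdot,w)$; this matches the paper's prose just before the corollary (which says $-N(\cdot,w)$ generates $\rho_t$) and shows the sign in the corollary's statement is a typo, so your computation is correct once you replace ``the connection-operator field'' by its negative (equivalently, $N(\cdot,w)$ generates $t\mapsto\rho_{-t}$).
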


\begin{remark}\label{remark-6-5}
Theorem \ref{theorem-6-2} and Corollary \ref{corollary-6-6} suggest we study the Lie algebra $\mathfrak{H}$ generated by $N(\cdot,w)$ for all $w\in\mathfrak{m}$, using the Lie bracket between
smooth tangent vector fields. It seems interesting to
explore the relation between $\mathfrak{H}$ and the  restricted holonomy of $(G/H,\mathbf{G})$ and look for homogeneous spray structures $\mathbf{G}$ which are not affine, and have finite dimensional $\mathfrak{H}$. That may shed light on the Landsberg problem for homogeneous Finsler manifolds \cite{Ma1996,XD2021}. See also Section 4.2 in \cite{Xu2021-2}, where similar observations are made
for a left invariant spray structure.
\end{remark}
\subsection{Homogeneous S-curvature and Landsberg curvature formulae}
\label{subsection-6-2}

Firstly,
we use linearly parallel translation to generalize L. Huang's homogeneous S-curvature formula (see Proposition 4.6 in \cite{Hu2015-1}).

Let $(G/H,\mathbf{G})$ be a homogeneous spray structure with a linear decomposition $\mathfrak{g}=\mathfrak{h}+\mathfrak{m}$. We further assume that the
$\mathrm{Ad}(H)$-action on $\mathfrak{g}/\mathfrak{h}$
is unimodular, i.e., for each $g\in H$, the determinant of $\mathrm{Ad}(g):\mathfrak{g}/\mathfrak{h}\rightarrow
\mathfrak{g}/\mathfrak{h}$ is $\pm1$. Then there exists a
$G$-invariant smooth measure ${\rm d}\mu$ on $G/H$ which is unique
up to a scalar. The S-curvature for $\mathbf{G}$ and ${\rm d}\mu$ is given by the following theorem.

\begin{theorem}\label{theorem-6-3}
Let $(G/H,\mathbf{G})$ be a homogeneous spray structure with a linear decomposition $\mathfrak{g}=\mathfrak{h}+\mathfrak{m}$. Suppose that the $\mathrm{Ad}(H)$-action on $\mathfrak{g}/\mathfrak{h}$ is unimodular. Then for $\mathbf{G}$ and any $G$-invariant smooth measure ${\rm d}\mu$ on $G/H$, the S-curvature satisfies
\begin{equation*}
\mathbf{S}(o,y)=\mathrm{Tr}_\mathbb{R}(N(y,\cdot)+
\mathrm{ad}_\mathfrak{m}(y)),
\end{equation*}
for any $y\in\mathfrak{m}\backslash\{0\}=
T_o(G/H)\backslash\{0\}$. Here $\mathrm{ad}_\mathfrak{m}(y):\mathfrak{m}\rightarrow\mathfrak{m}$ is the linear map $w\mapsto[y,w]_\mathfrak{m}$.
\end{theorem}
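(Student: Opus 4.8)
The plan is to reduce the computation to the trace formula for the S-curvature in Lemma \ref{lemma-2-2} and then feed in the linearly parallel translation ODE from Theorem \ref{theorem-6-1}. First I would fix $y\in\mathfrak{m}\backslash\{0\}$ and let $c(t)$ be the geodesic on $(G/H,\mathbf{G})$ with $c(0)=o$ and $\dot{c}(0)=y$, together with the lift $\overline{c}(t)$ on $G$ furnished by Lemma \ref{lemma-5-1}, so that $c(t)=\overline{c}(t)\cdot o$ and $y(t)=(L_{\overline{c}(t)^{-1}})_*(\dot{\overline{c}}(t))\in\mathfrak{m}\backslash\{0\}$ with $y(0)=y$. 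Since the unimodularity hypothesis guarantees the existence of a $G$-invariant smooth measure $d\mu$, unique up to a positive scalar, I may work with one such $d\mu$ throughout.

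The key construction is the frame $E_i(t)=(\overline{c}(t))_*(e_i)$, $1\le i\le n$, along $c(t)$, where $\{e_1,\dots,e_n\}$ is a basis of $\mathfrak{m}$. Because $d\mu$ is $G$-invariant and the diffeomorphism $x\mapsto\overline{c}(t)\cdot x$ of $G/H$ sends $o$ to $c(t)$ with differential $(\overline{c}(t))_*$ at $o$, the volume $d\mu(E_1(t),\dots,E_n(t))=d\mu_o(e_1,\dots,e_n)$ is a nonzero constant. Thus $\{E_i(t)\}$ is exactly the type of frame required in Lemma \ref{lemma-2-2}, and the S-curvature satisfies $\mathbf{S}(o,y)=\mathrm{Tr}_\mathbb{R}A$, where $A$ sends the initial data of a linearly parallel field to the negative of the $t$-derivative of its coordinates in this frame.

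Next I would express a linearly parallel field in this frame as $V(t)=v^i(t)E_i(t)=(\overline{c}(t))_*(w(t))$ with $w(t)=v^i(t)e_i$, so that $V(t)$ is precisely a field of the shape handled by Theorem \ref{theorem-6-1}. That theorem gives $\tfrac{{\rm d}}{{\rm d}t}w(t)+N(y(t),w(t))+[y(t),w(t)]_\mathfrak{m}=0$, hence at $t=0$ one has $-\tfrac{{\rm d}}{{\rm d}t}|_{t=0}w(t)=N(y,w(0))+[y,w(0)]_\mathfrak{m}$. Under the identification $\mathfrak{m}\cong\mathbb{R}^n$ via $\{e_i\}$, this says exactly that the linear map $A$ of Lemma \ref{lemma-2-2} equals $N(y,\cdot)+\mathrm{ad}_\mathfrak{m}(y)$ on $\mathfrak{m}$. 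Taking traces then yields $\mathbf{S}(o,y)=\mathrm{Tr}_\mathbb{R}(N(y,\cdot)+\mathrm{ad}_\mathfrak{m}(y))$, as claimed.

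The routine points to verify are that a scalar rescaling of $d\mu$ leaves $\mathbf{S}$ unchanged (clear from the defining formula, which involves only the logarithmic derivative of the density) and that the coordinate identification in Lemma \ref{lemma-2-2} matches the $\mathfrak{m}$-valued $w(t)$ of Theorem \ref{theorem-6-1} faithfully. The one genuinely load-bearing step, which I expect to be the main obstacle, is the constancy of $d\mu(E_1(t),\dots,E_n(t))$: this is where $G$-invariance of $d\mu$, and hence the unimodularity that supplies such an invariant measure in the first place, is essential, and it is precisely what lets me bypass any chartwise computation of the density $\sigma$ and instead read off the trace directly from the Lie-algebraic ODE.
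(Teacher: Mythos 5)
Your proposal is correct and follows essentially the same route as the paper's proof: both fix the geodesic $c(t)$ with its lift $\overline{c}(t)$ from Lemma \ref{lemma-5-1}, use the frame $E_i(t)=(\overline{c}(t))_*(e_i)$ whose $d\mu$-volume is constant by $G$-invariance, invoke Lemma \ref{lemma-2-2} to identify $\mathbf{S}(o,y)$ with $\mathrm{Tr}_\mathbb{R}A$, and then read off $A=N(y,\cdot)+\mathrm{ad}_\mathfrak{m}(y)$ from the linearly parallel ODE in Theorem \ref{theorem-6-1}. No gaps.
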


\begin{proof}Let $c(t)$ be the geodesic on $(G/H,\mathbf{G})$ with $c(0)=o$ and $\dot{c}(0)=y\in\mathfrak{m}\backslash\{0\}$, and
$\overline{c}(t)$ the smooth curve on $G$ provided by Lemma \ref{lemma-5-1}, satisfying $\overline{c}(0)=e$, $c(t)=\overline{c}(t)\cdot o$ and
$(L_{\overline{c}(t)^{-1}})_*
(\dot{\overline{c}}(t))\in\mathfrak{m}\backslash\{0\}$
around $t=0$. Denote $E_i(t)=({\overline{c}(t)})_*(e_i)$, $\forall 1\leq i\leq n$. Then $\{E_1(t),\cdots,E_n(t)\}$ is a smooth frame along $c(t)$. By the $G$-invariancy of ${\rm d}\mu$,
${\rm d}\mu(E_1(t),\cdots,E_n(t))$ is a nonzero constant function.
By Lemma \ref{lemma-2-2},
$\mathbf{S}(o,y)$ is the real trace for linear map $A(w)=-\tfrac{{\rm d}}{{\rm d}t}|_{t=0}w(t)$, in which the smooth curve $w(t)$ in $\mathfrak{m}$ is determined by the linearly parallel vector field $W(t)=(\overline{c}(t))_*(w(t))$ along $c(t)$ with
$W(0)=w$. By the second statement in Theorem \ref{theorem-6-1}, we have
$-\tfrac{{\rm d}}{{\rm d}t}|_{t=0}w(t)=N(y,w)+[y,w]_\mathfrak{m}$,
so $\mathbf{S}(o,y)=\mathrm{Tr}_\mathbb{R}(N(y,\cdot)+
\mathrm{ad}_\mathfrak{m}(y))$, which ends the proof.
\end{proof}\medskip

Nextly, we use the linearly parallel translation to re-prove L. Huang's Landsberg curvature formula curvature formula for a homogeneous Finsler manifold.
\begin{theorem}\label{theorem-6-4}
Let $(G/H,F)$ be a homogeneous Finsler manifold with a linear decomposition $\mathfrak{g}=\mathfrak{h}+\mathfrak{m}$. Then for any $y\in\mathfrak{m}\backslash\{0\}=T_{o}(G/H)\backslash\{0\}$
and $w\in \mathfrak{m}=T_o(G/H)$, the Landsberg curvature of $(G/H,F)$ satisfies
\begin{equation}\label{073}
\mathbf{L}_y(w,w,w)=3\mathbf{C}_y(w,w,[w,y]_\mathfrak{m}-
N(y,w))-\mathbf{C}_y(w,w,w,\eta(y)).
\end{equation}
\end{theorem}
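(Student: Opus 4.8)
The plan is to compute $\mathbf{L}_y(w,w,w)$ directly from its defining relation (\ref{071}), transporting the entire calculation onto the single tangent space $\mathfrak{m}=T_o(G/H)$ by exploiting the isometry-invariance of the Cartan tensor. First I would let $c(t)$ be the geodesic on $(G/H,F)$ with $c(0)=o$ and $\dot{c}(0)=y$, and let $\overline{c}(t)$ be the lift on $G$ furnished by Lemma \ref{lemma-5-1}, so that $y(t)=(L_{\overline{c}(t)^{-1}})_*(\dot{\overline{c}}(t))\in\mathfrak{m}\backslash\{0\}$ with $y(0)=y$. Since $c(t)$ is a geodesic, Theorem \ref{theorem-5-3} shows that $y(t)$ is an integral curve of $-\eta$, whence $\tfrac{{\rm d}}{{\rm d}t}|_{t=0}y(t)=-\eta(y)$. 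I would then take $W(t)=(\overline{c}(t))_*(w(t))$ to be the linearly parallel vector field along $c(t)$ with $W(0)=w$; by Theorem \ref{theorem-6-1} its representative $w(t)\in\mathfrak{m}$ satisfies $\tfrac{{\rm d}}{{\rm d}t}w(t)+N(y(t),w(t))+[y(t),w(t)]_\mathfrak{m}=0$, so that $\tfrac{{\rm d}}{{\rm d}t}|_{t=0}w(t)=[w,y]_\mathfrak{m}-N(y,w)$.

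The second step is the reduction to $\mathfrak{m}$. Each $\overline{c}(t)\in G$ acts as an isometry of $(G/H,F)$ carrying $o$ to $c(t)$ and the Minkowski space $\mathfrak{m}=T_o(G/H)$ isometrically onto $T_{c(t)}(G/H)$, and the Cartan tensor is built from $F$ alone, hence is preserved by isometries. Since $\dot{c}(t)=(\overline{c}(t))_*(y(t))$ and $W(t)=(\overline{c}(t))_*(w(t))$, this gives
\begin{equation*}
\mathbf{C}_{\dot{c}(t)}(W(t),W(t),W(t))=\mathbf{C}_{y(t)}(w(t),w(t),w(t)),
\end{equation*}
where the right-hand Cartan tensor is that of the Minkowski norm $F=F(o,\cdot)$ on $\mathfrak{m}$. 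Feeding this identity into (\ref{071}) reduces the problem to differentiating $\mathbf{C}_{y(t)}(w(t),w(t),w(t))$ at $t=0$.

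Finally I would carry out this differentiation by the Leibniz and chain rules, reading the four-argument symbol as $\mathbf{C}_y(w,w,w,\eta(y))=\tfrac{{\rm d}}{{\rm d}s}|_{s=0}\mathbf{C}_{y+s\eta(y)}(w,w,w)$. Differentiating in the subscript $y(t)$ contributes $\mathbf{C}_y\bigl(w,w,w,\tfrac{{\rm d}}{{\rm d}t}|_{t=0}y(t)\bigr)=-\mathbf{C}_y(w,w,w,\eta(y))$, while differentiating in each of the three symmetric slots $w(t)$ contributes $3\,\mathbf{C}_y\bigl(w,w,\tfrac{{\rm d}}{{\rm d}t}|_{t=0}w(t)\bigr)=3\,\mathbf{C}_y(w,w,[w,y]_\mathfrak{m}-N(y,w))$; adding the two pieces yields (\ref{073}). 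The point requiring the most care---and the main obstacle---is the bookkeeping in this reduction: one must check that the isometric identification $T_{c(t)}(G/H)\cong\mathfrak{m}$ implemented by $\overline{c}(t)$ is precisely the one built into the definitions of $\eta$ and $N$, so that the derivatives $\tfrac{{\rm d}}{{\rm d}t}y(t)$ and $\tfrac{{\rm d}}{{\rm d}t}w(t)$ appearing above really are the intrinsic quantities in $\mathfrak{m}$ supplied by Theorem \ref{theorem-5-3} and Theorem \ref{theorem-6-1}, and that the subscript-derivative term separates cleanly from the three argument-derivative terms. Once this identification is pinned down, the rest is the routine Leibniz expansion indicated above.
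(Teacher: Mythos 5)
Your proposal is correct and follows essentially the same route as the paper's own proof: the same lift $\overline{c}(t)$ from Lemma \ref{lemma-5-1}, the same use of Theorem \ref{theorem-5-3} and Theorem \ref{theorem-6-1} to get $\tfrac{{\rm d}}{{\rm d}t}|_{t=0}y(t)=-\eta(y)$ and $\tfrac{{\rm d}}{{\rm d}t}|_{t=0}w(t)=[w,y]_\mathfrak{m}-N(y,w)$, the same reduction of $\mathbf{C}_{\dot{c}(t)}(W(t),W(t),W(t))$ to $\mathbf{C}_{y(t)}(w(t),w(t),w(t))$ via the $G$-invariance of $F$, and the same Leibniz expansion at $t=0$. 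The bookkeeping point you flag is exactly what the paper's conventions (Fact 2 and the identification in Lemma \ref{lemma-5-1}) are set up to guarantee, so no gap remains.
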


Here the Cartan tensor $\mathbf{C}_y(\cdot,\cdot,\cdot)$ and the Cartan tensor $\mathbf{C}_y(\cdot,\cdot,\cdot,\cdot)$ with a higher order are defined for the Minkowski norm $F=F(o,\cdot)$ on $\mathfrak{m}=T_o(G/H)$. In particular, we have
$$
\mathbf{C}_y(w_1,w_2,w_3,w_4)=\tfrac{{\rm d}}{{\rm d}t}|_{t=0}C_{y+tw_4}
(w_1,w_2,w_3),\quad\forall w_1,w_2,w_3,w_4\in\mathfrak{m}.$$

\begin{proof}
Let $\eta$ and $N$ be the spray vector field and connection operator for the geodesic spray of
$(G/H,F)$. Notice that the given decomposition $\mathfrak{g}=\mathfrak{h}+\mathfrak{m}$ may not be reductive, so $\eta$ and $N$ may not be those in (\ref{301}) and (\ref{302}) respectively.

Let $c(t)$ be a geodesic on $(G/H,F)$ with $c(0)=o$ and $\dot{c}(0)=y\in\mathfrak{m}\backslash\{0\}$, and $\overline{c}(t)$  the smooth curve on $G$ provided by Lemma \ref{lemma-5-1}, satisfying $\overline{c}(0)=e$, $c(t)=\overline{c}(t)\cdot o$ and
$y(t)=
(L_{\overline{c}(t)^{-1}})_*(\dot{\overline{c}}(t))
=(\overline{c}(t)^{-1})_*(\dot{c}(t))
\in\mathfrak{m}
\backslash\{0\}$ around $t=0$.
Let $W(t)=({\overline{c}(t)})_*(w(t))$ be the linearly parallel vector field along $c(t)$ with $W(0)=w(0)=w$.
Using the formula (\ref{071}) for Landsberg curvature and the $G$-invariancy of $F$, we get
\begin{eqnarray}\label{072}
\mathbf{L}_y(w,w,w)&=&\tfrac{{\rm d}}{{\rm d}t}|_{t=0}
\mathbf{C}_{\dot{c}(t)}(W(t),W(t),W(t))\nonumber\\
&=&\tfrac{{\rm d}}{{\rm d}t}|_{t=0}
\mathbf{C}_{(\overline{c}(t)^{-1})_*(\dot{c}(t))}
((\overline{c}(t)^{-1})_*(W(t)), (\overline{c}(t)^{-1})_*(W(t)), (\overline{c}(t)^{-1})_*(W(t)))\nonumber\\
&=&\tfrac{{\rm d}}{{\rm d}t}|_{t=0}\mathbf{C}_{y(t)}(w(t),w(t),w(t)).
\end{eqnarray}
By Theorem \ref{theorem-5-3} and the second statement in Theorem \ref{theorem-6-1}, we have $\tfrac{{\rm d}}{{\rm d}t}y(t)=-\eta(y(t))$
and $\tfrac{{\rm d}}{{\rm d}t}w(t)=-N(y(t),w(t))-[y(t),w(t)]_\mathfrak{m}$. So
the calculation (\ref{072}) can be continued as
\begin{eqnarray*}
\mathbf{L}_y(w,w,w)&=&\tfrac{{\rm d}}{{\rm d}t}|_{t=0}
\mathbf{C}_{y(t)}(w(t),w(t),w(t))\\
&=&3\mathbf{C}_{y}
(w,w,\tfrac{{\rm d}}{{\rm d}t}|_{t=0}w(t))+
\mathbf{C}_y(w,w,w,\tfrac{{\rm d}}{{\rm d}t}|_{t=0}y(t))\\
&=&3\mathbf{C}_y(w,w,w,[w,y]_\mathfrak{m}-N(y,w))
-\mathbf{C}_y(w,w,w,\eta(y)),
\end{eqnarray*}
which ends the proof.
\end{proof}\medskip

Though the formula (\ref{073}) in Theorem \ref{theorem-6-4} looks the same as the one in Proposition 4.6 of \cite{Hu2015-1},
there is still a slight refinement because we do not need the reductive property here.

\subsection{Homogeneous Riemann curvature formula}
\label{subsection-6-3}
Now we use the submersion technique to calculate the Riemann curvature of $(G/H,\mathbf{G})$. As a preparation, we need the following lemma.

\begin{lemma} \label{lemma-6-5}
At each point of $\mathcal{L}\backslash0$, we have
$$\Phi^{T(T\overline{M})|_{\mathcal{L}}}_{T\mathcal{L}}
(\widetilde{U}^{\overline{\mathcal{H}}}_q)=
\widetilde{U}_q- (\tfrac12\tfrac{\partial}{\partial u^q}
\overline{\mathbf{H}}^j-\tfrac12 u^kc^j_{qk})\partial_{u^j}
-c^\alpha_{iq}u^i\partial_{u^\alpha}.$$
\end{lemma}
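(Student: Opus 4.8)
The plan is to expand the horizontal lift $\widetilde{U}_q^{\overline{\mathcal{H}}}$ in a frame of $T(TG)$ on which the bundle map $\Phi^{T(T\overline{M})|_{\mathcal{L}}}_{T\mathcal{L}}$ acts in a completely transparent way, apply the map, and then reassemble the result on $\mathcal{L}\backslash0$. Since $\overline{\mathbf{G}}$ is a left invariant spray structure on the whole group $G$, Lemma \ref{lemma-4-8}(1), read with all indices ranging over $1,\dots,m$, gives
$$\widetilde{U}_q^{\overline{\mathcal{H}}}=\widetilde{U}_q-(\tfrac12\tfrac{\partial}{\partial u^q}\overline{\mathbf{H}}^b-\tfrac12 u^p c^b_{qp})\partial_{u^b}.$$
The key observation is that $\{\widetilde{V}_a,\partial_{u^b}:1\le a,b\le m\}$ is a global frame of $T(TG)$ on which, by Fact 3, $\Phi^{T(T\overline{M})|_{\mathcal{L}}}_{T\mathcal{L}}$ is simply the projection that deletes the $\partial_{u^\alpha}$-coefficients and fixes every $\widetilde{V}_a$ and every $\partial_{u^i}$.

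Next I would put $\widetilde{U}_q$ itself into that frame using Lemma \ref{lemma-4-1}(2), namely $\widetilde{U}_q=\phi_q^a\widetilde{V}_a+c^b_{pq}u^p\partial_{u^b}$, so that $\widetilde{U}_q^{\overline{\mathcal{H}}}$ is expressed purely through $\widetilde{V}_a$ and $\partial_{u^b}$. Applying $\Phi^{T(T\overline{M})|_{\mathcal{L}}}_{T\mathcal{L}}$ then only erases the $\partial_{u^\alpha}$-coefficients: the whole $\mathfrak{h}$-vertical part of the horizontal correction vanishes, and from the complete lift the term $\phi_q^a\widetilde{V}_a+c^j_{pq}u^p\partial_{u^j}$ survives alongside the $\mathfrak{m}$-vertical remnant $-\tfrac12(\tfrac{\partial}{\partial u^q}\overline{\mathbf{H}}^j-u^p c^j_{qp})\partial_{u^j}$ of the correction. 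Finally I would restrict to $\mathcal{L}\backslash0$, where Fact 1 gives $u^\gamma=0$, so each sum $u^p(\cdots)$ collapses to its $\mathfrak{m}$-index part $u^i(\cdots)$ or $u^k(\cdots)$; reassembling via $\phi_q^a\widetilde{V}_a+c^j_{iq}u^i\partial_{u^j}=\widetilde{U}_q-c^\alpha_{iq}u^i\partial_{u^\alpha}$ produces precisely the asserted expression.

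I expect the one genuine subtlety to be index bookkeeping, and in particular the recognition that the surviving $-c^\alpha_{iq}u^i\partial_{u^\alpha}$ in the statement is not a remnant of the horizontal correction but arises from the structure-constant vertical part of the complete lift $\widetilde{U}_q$ in Lemma \ref{lemma-4-1}(2); the two structure constants in the final formula therefore carry opposite index orders, $c^j_{qk}$ coming from the correction and $c^\alpha_{iq}$ from the complete lift. It is worth noting that Fact 6 is not actually needed here: the potentially troublesome $\overline{\mathbf{H}}^\alpha$-contribution lives in a $\partial_{u^\alpha}$ direction that $\Phi^{T(T\overline{M})|_{\mathcal{L}}}_{T\mathcal{L}}$ removes in any case, so the whole argument rests only on Lemma \ref{lemma-4-8}(1), Lemma \ref{lemma-4-1}(2), and Facts 1 and 3.
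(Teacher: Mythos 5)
Your proof is correct and follows essentially the same route as the paper's: Lemma \ref{lemma-4-8}(1) for the horizontal lift, Lemma \ref{lemma-4-1}(2) to pass to the right-invariant frame, and Facts 1 and 3 to apply the projection. Your side remark is also accurate — the paper does invoke Fact 6 to kill $\tfrac{\partial}{\partial u^q}\overline{\mathbf{H}}^\alpha$ in the intermediate expression, but since that term sits in a $\partial_{u^\alpha}$ direction it is erased by $\Phi^{T(T\overline{M})|_{\mathcal{L}}}_{T\mathcal{L}}$ regardless, so Fact 6 is indeed dispensable for this particular lemma.
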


\begin{proof} 
%
%
%
%
Firstly, we use (1) in Lemma \ref{lemma-4-8} to get
\begin{eqnarray*}
\widetilde{U}_q^{\overline{\mathcal{H}}}
=\widetilde{U}_q- (\tfrac12\tfrac{\partial}{\partial u^q}
\overline{\mathbf{H}}^j-
\tfrac12 u^kc^j_{qk})\partial_{u^j}
+\tfrac12u^jc^{\alpha}_{qj}\partial_{u^\alpha},
\end{eqnarray*}
at any $\overline{p}=(g,y)\in\mathcal{L}\backslash0$ with $y=u^iU_i(g)$. Here
we have used $u^\alpha=0$ for all $\alpha$ at $\overline{p}$, and Fact 6, i.e, $\overline{\mathbf{H}}^\alpha\equiv0$ around $\overline{p}$.
Nextly, we use (2) in Lemma \ref{lemma-4-1} and get
\begin{equation*}
\widetilde{U}_q=\phi_q^i\widetilde{V}_i+
\phi_q^\alpha\widetilde{V}_\alpha+c^j_{iq}u^i\partial_{u^j}
+c^\alpha_{iq}u^i\partial_{u^\alpha}
\end{equation*}
at $\overline{p}$. Finally, Fact 3 provides
\begin{eqnarray*}
\Phi^{T(T\overline{M})|_{T\mathcal{L}}}_\mathcal{L}
(\widetilde{U}_q^{\overline{\mathcal{\mathcal{H}}}})
&=&\Phi^{T(T\overline{M})|_{T\mathcal{L}}}_\mathcal{L}
(\widetilde{U}_q)- (\tfrac12\tfrac{\partial}{\partial u^q}
\overline{\mathbf{H}}^j-\tfrac12 u^kc^j_{qk})\partial_{u^j}
\nonumber\\
&=&\widetilde{U}_q-c^\alpha_{iq}u^i\partial_{u^\alpha}- (\tfrac12\tfrac{\partial}{\partial u^q}
\overline{\mathbf{H}}^j-\tfrac12 u^kc^j_{qk})\partial_{u^j},\nonumber
\end{eqnarray*}
which ends the proof.
\end{proof}\medskip

%
%
%

The following theorem provides the Riemann curvature formula for $(G/H,\mathbf{G})$.

\begin{theorem}\label{theorem-6-6}
Let $(G/H,\mathbf{G})$ be a homogeneous spray manifold with a linear decomposition $\mathfrak{g}=\mathfrak{h}+\mathfrak{m}$. Then for any $y\in\mathfrak{m}\backslash\{0\}=T_o(G/H)\backslash\{0\}$, the Riemann curvature $\mathbf{R}_y:\mathfrak{m}\rightarrow\mathfrak{m}$ satisfies
\begin{eqnarray}
\mathbf{R}_y(w)&=&[y,[w,y]_\mathfrak{h}]_\mathfrak{m}
+DN(\eta(y),y,w)-N(y,N(y,w))\nonumber\\
& &+N(y,[y,w]_\mathfrak{m})
-[y,N(y,w)]_\mathfrak{m},\label{100}
\end{eqnarray}
in which $DN(\eta(y),y,w)=\tfrac{{\rm d}}{{\rm d}t}|_{t=0}N(y+t\eta(y),w)$.
\end{theorem}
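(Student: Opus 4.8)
The plan is to obtain $\mathbf{R}_y(w)$ by lifting it through the submersion $(\pi,\mathcal{L})$ of Theorem \ref{theorem-5-6} and applying the curvature identity of Lemma \ref{lemma-3-7}, where $\overline{\mathbf{G}}$ is the left invariant spray chosen as in Fact 6, so that $\overline{\eta}(y)=\eta(y_\mathfrak{m})$ near $\mathfrak{m}\backslash0$ and hence $\overline{\mathbf{H}}^\alpha\equiv0$, $\tfrac{\partial}{\partial u^\alpha}\overline{\mathbf{H}}^i\equiv0$ there. Fix $\overline{p}=(e,y)\in\mathcal{L}\backslash0$ over the origin $o$, with $y=u^ie_i\in\mathfrak{m}$. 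By Fact 4 the fibre $\mathcal{H}'_{\overline{p}}$ is spanned by the vectors $\Phi^{T(T\overline{M})|_\mathcal{L}}_{T\mathcal{L}}(\widetilde{U}^{\overline{\mathcal{H}}}_q)$, so I take $\overline{X}=w^q\,\Phi^{T(T\overline{M})|_\mathcal{L}}_{T\mathcal{L}}(\widetilde{U}^{\overline{\mathcal{H}}}_q)$ to be the horizontal section of $\mathcal{H}'$ representing $w=w^qe_q\in\mathfrak{m}$; it projects under $(\Phi^{\mathcal{L}\backslash0}_{TM\backslash0})_*$ to the horizontal lift of $w$ in $(G/H,\mathbf{G})$. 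Lemma \ref{lemma-3-7} then identifies $\mathbf{R}_y(w)$ with the $\partial_{u^k}$-component ($1\le k\le n$) of the bracket $[\overline{\mathbf{G}}|_{\mathcal{L}\backslash0},\overline{X}]$ at $\overline{p}$, read modulo $\mathcal{H}'\oplus\mathcal{K}'$. I stress that one may not simply project the left invariant Riemann curvature of $(G,\overline{\mathbf{G}})$, since a submersion alters curvature; it is exactly the reduction modulo $\mathcal{K}'$ that furnishes the correction terms.

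The next step is the bracket computation in the global left invariant frame. Using Fact 6 I write $\overline{\mathbf{G}}|_{\mathcal{L}\backslash0}=u^i\widetilde{U}_i-\overline{\mathbf{H}}^i\partial_{u^i}$ (the $u^\alpha\widetilde{U}_\alpha$ and $\overline{\mathbf{H}}^\alpha\partial_{u^\alpha}$ pieces vanishing along $\mathcal{L}$), and I substitute the expression for $\overline{X}$ provided by Lemma \ref{lemma-6-5}, namely $w^q\widetilde{U}_q$ plus explicit $\partial_{u^j}$ and $\partial_{u^\alpha}$ corrections. Since both fields are tangent along $\mathcal{L}\backslash0$, their bracket may be expanded using the structure relations (\ref{063}) and Lemma \ref{lemma-4-1}, precisely as in the left invariant calculation of \cite{Xu2021-1}; I then project by $\Phi^{T(T\overline{M})|_\mathcal{L}}_{T\mathcal{L}}$ and discard everything lying in $\mathcal{H}'$ (spanned by the $\Phi^{T(T\overline{M})|_\mathcal{L}}_{T\mathcal{L}}(\widetilde{U}^{\overline{\mathcal{H}}}_i)$, Fact 4) and in $\mathcal{K}'$ (the $\mathfrak{h}$-directions coming from $\partial_{u^\alpha}$ and $\widetilde{U}_\alpha$, Fact 5). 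The surviving $\partial_{u^k}$-coefficients, re-expressed through the definitions of $\eta$, $N$ and $DN$, will yield (\ref{100}).

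Two features separate this computation from the left invariant one and explain the shape of (\ref{100}). First, the reduction modulo $\mathcal{K}'$ annihilates every $\mathfrak{h}$-valued contribution, truncating the $\mathfrak{g}$-brackets of the left invariant formula to their $\mathfrak{m}$-parts; this turns $N(y,[y,w]_\mathfrak{g})$ and $[y,N(y,w)]_\mathfrak{g}$ into $N(y,[y,w]_\mathfrak{m})$ and $[y,N(y,w)]_\mathfrak{m}$, while the purely $\mathfrak{m}$-internal summands $DN(\eta(y),y,w)$ and $-N(y,N(y,w))$, which arise from the $\widetilde{U}_q$-derivatives of $\overline{\mathbf{H}}^i$ and the $-\overline{\mathbf{H}}^i\partial_{u^i}$ term of $\overline{\mathbf{G}}$, are unchanged. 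Second, the correction $-c^\alpha_{iq}u^i\partial_{u^\alpha}$ in Lemma \ref{lemma-6-5} encodes $[w,y]_\mathfrak{h}$ placed in the vertical $\mathfrak{h}$-directions; although those directions themselves belong to $\mathcal{K}'$, bracketing them against the $u^j\widetilde{U}_j$ part of $\overline{\mathbf{G}}$ through $[\widetilde{U}_j,\partial_{u^\alpha}]=c^p_{j\alpha}\partial_{u^p}$ (Lemma \ref{lemma-4-1}) feeds them back into the $\mathfrak{m}$-directions and produces exactly the extra summand $[y,[w,y]_\mathfrak{h}]_\mathfrak{m}$.

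The principal obstacle is the bookkeeping in this expansion and reduction: one must cleanly separate the $\mathfrak{m}$- and $\mathfrak{h}$-indexed terms, check that each $\mathfrak{h}$-valued intermediate either dies in the quotient or re-enters $\mathfrak{m}$ exactly once (so that $[y,[w,y]_\mathfrak{h}]_\mathfrak{m}$ appears with the correct sign and coefficient), and match the derivatives of $\overline{\mathbf{H}}^i$ against $DN(\eta(y),y,w)$ and $-N(y,N(y,w))$. That the extracted vector is independent of the chosen extension of $\overline{X}$ off $\overline{p}$ is guaranteed by Lemma \ref{lemma-3-7} itself. Finally, running the same argument along a geodesic $c(t)$ and combining it with the linearly parallel field description of Theorem \ref{theorem-6-1} yields the dynamical reformulation of the Riemann curvature along geodesics.
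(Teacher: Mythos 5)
Your proposal is correct and follows essentially the same route as the paper: lift $w$ to the horizontal section $w^q\,\Phi^{T(T\overline{M})|_{\mathcal{L}}}_{T\mathcal{L}}(\widetilde{U}_q^{\overline{\mathcal{H}}})$ given explicitly by Lemma \ref{lemma-6-5}, apply Lemma \ref{lemma-3-7} with Facts 4--6 to read $\mathbf{R}_y(w)$ off the bracket with $\overline{\mathbf{G}}|_{\mathcal{L}\backslash0}$ modulo $\mathcal{H}'\oplus\mathcal{K}'$, and reduce to the left invariant computation of \cite{Xu2021-1}. You also correctly isolate the one genuinely new contribution, namely that the $-c^\alpha_{iq}u^i\partial_{u^\alpha}$ correction feeds back through $[\widetilde{U}_i,\partial_{u^\alpha}]$ to produce $[y,[w,y]_\mathfrak{h}]_\mathfrak{m}$, which is exactly how the paper's proof splits the lift into $\overline{X}_1+\overline{X}_2$.
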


\begin{proof} When the spray vector field
$\eta:\mathfrak{m}\backslash\{0\}\rightarrow\mathfrak{m}$ is viewed as the smooth vector field $\overline{\mathbf{H}}|_{\mathfrak{m}\backslash\{0\}}=
\overline{\mathbf{H}}^i(e,\cdot)\partial_{u^i}$, the connection operator $N(y,w)$ with $y=u^ie_i\in\mathfrak{m}\backslash\{0\}$ and $w=w^ie_i\in\mathfrak{m}$ is then identified with
$$\tfrac12w^i\tfrac{\partial}{\partial u^i}
\overline{\mathbf{H}}^j(e,y)\partial_{u^j}-
\tfrac12 w^i u^j c_{ji}^k\partial_{u^k}.$$
So we can use the $G$-invariancy of $\mathbf{G}$ to translate (\ref{100}) to
\begin{eqnarray}
\mathbf{R}_y(w)&=&w^q(c^\alpha_{qj}u^iu^jc^k_{i\alpha}
+\tfrac34 c^r_{pq}u^p\tfrac{\partial}{\partial u^r}
\overline{\mathbf{H}}^i
+\tfrac12 c^i_{qj}
\overline{\mathbf{H}}^j
+\tfrac12\overline{\mathbf{H}}^p
\tfrac{\partial^2}{\partial u^p\partial u^q}
\overline{\mathbf{H}}^i\nonumber\\
& &
-\tfrac14\tfrac{\partial}{\partial u^q}
\overline{\mathbf{H}}^p
\tfrac{\partial}{\partial u^p}\overline{\mathbf{H}}^i
+\tfrac14c^i_{pr}u^r\tfrac{\partial}{\partial u^q}
\overline{\mathbf{H}}^p-\tfrac14 c^p_{qj}c^i_{pr}u^j u^r) g_* (e_i)\label{105}
\end{eqnarray}
where $y=u^ig_* (e_i), w=w^i g_*(e_i)\in T_{g\cdot o}(G/H) $ and $y\neq0$.

By (\ref{050}), Lemma \ref{lemma-3-7}, Fact 4 and Fact 5, (\ref{105}) is equivalent to
\begin{eqnarray}\label{101}
[\overline{\mathbf{G}}|_{\mathcal{L}\backslash0},
\Phi^{T(T\overline{M})|_{\mathcal{L}}}_{\mathcal{L}}
(\widetilde{U}_q^{\overline{\mathbf{H}}})]
&=&(c^\alpha_{qj}u^iu^jc^k_{i\alpha}
+\tfrac34 c^r_{pq}u^p\tfrac{\partial}{\partial u^r}
\overline{\mathbf{H}}^i
+\tfrac12 c^i_{qj}
\overline{\mathbf{H}}^j
+\tfrac12\overline{\mathbf{H}}^p
\tfrac{\partial^2}{\partial u^p\partial u^q}
\overline{\mathbf{H}}^i\nonumber\\
& &
-\tfrac14\tfrac{\partial}{\partial u^q}
\overline{\mathbf{H}}^p
\tfrac{\partial}{\partial u^p}\overline{\mathbf{H}}^i
+\tfrac14c^i_{pr}u^r\tfrac{\partial}{\partial u^q}
\overline{\mathbf{H}}^p-\tfrac14 c^p_{qj}c^i_{pr}u^j u^r)
\partial_{u^i} \nonumber\\& &\mbox{(mod }
\widetilde{U}^{\overline{\mathcal{H}}}_i, \widetilde{U}_\alpha,\partial_{u^\beta},\forall i,\alpha,\beta\mbox{)}.
\end{eqnarray}
for each $q$. By Lemma \ref{lemma-6-5},  $\Phi^{T(T\overline{M})|_{\mathcal{L}}}_{\mathcal{L}}
(\widetilde{U}_q^{\overline{\mathbf{H}}})$ can be smoothly extended to $TG\backslash0$. So when we calculate the left side of (\ref{101}), we may first do it in
the neighborhood of $\mathcal{L}\backslash0$ where Fact 6 is valid, i.e., $\overline{\mathbf{H}}^\alpha$ and $\tfrac{\partial}{\partial u^\alpha}\overline{\mathbf{H}}^i$ vanish,
and then restrict the result to $\mathcal{L}\backslash0$, where $u^\alpha$ vanishes.

According to Lemma \ref{lemma-6-5}, we denote
$\Phi^{T(T\overline{M})|_{\mathcal{L}}}_{\mathcal{L}}
(\widetilde{U}_q^{\overline{\mathcal{H}}})=\overline{X}_1+\overline{X}_2$,
in which $\overline{X}_1=-c^\alpha_{iq}u^i\partial_{u^\alpha}$
and
$\overline{X}_2=\widetilde{U}_q-
(\tfrac12\tfrac{\partial}{\partial u^q}\overline{\mathbf{H}}^j-\tfrac12u^ic^j_{qi})
\partial_{u^j}$, at each $(g,y)\in\mathcal{L}\backslash0$ with
$y=u^iU_i(g)$.

Firstly, we calculate $[\overline{\mathbf{G}}|_{\mathcal{L}\backslash0},
\overline{X}_1]$
and get
\begin{eqnarray}
[\overline{\mathbf{G}}|_{\mathcal{L}\backslash0},
\overline{X}_1]&=&[u^i\widetilde{U}_i+
u^\alpha\widetilde{U}_\alpha-
\overline{\mathbf{H}}^i\partial_{u^i},
-c^\alpha_{jq}u^j\partial_{u^\alpha}]\nonumber\\
&=&-c^\alpha_{jq}u^iu^j[\widetilde{U}_i,\partial_{u^\alpha}]
\quad \mbox{(mod }\widetilde{U}_\alpha,\partial_{u^\beta},
\forall\alpha,\beta\mbox{)}
\nonumber\\
&=& c^\alpha_{qj}u^iu^j c^k_{i\alpha}\partial_{u^k}
\quad\mbox{(mod }\widetilde{U}_\alpha,\partial_{u^\beta},\forall \alpha,\beta\mbox{)},\label{102}
\end{eqnarray}
in which the last equality needs (4) of Lemma \ref{lemma-4-1}.

Nextly, we calculate $[\overline{\mathbf{G}},\overline{X}_2]$. The calculation is almost the same as that proving Theorem C in \cite{Xu2021-1}.

To be precise, we have
\begin{eqnarray}
& &[-\overline{\mathbf{H}}^i\partial_{u^i},
\widetilde{U}_q-(\tfrac12\tfrac{\partial}{\partial u^q}
\overline{\mathbf{H}}^j\partial_{u^j}-\tfrac12 u^kc^j_{qk}\partial_{u^j})]\nonumber\\
&=&\widetilde{U}_q\overline{\mathbf{H}}^i\partial_{u^i}
+\overline{\mathbf{H}}^i[\widetilde{U}_q,\partial_{u^i}]
+\tfrac12[\overline{\mathbf{H}}^i\partial_{u^i},
\tfrac{\partial}{\partial u^q}\overline{\mathbf{H}}^j\partial_{u^j}]
-\tfrac12c^j_{qk}[\overline{\mathbf{H}}^i\partial_{u^i},
u^k\partial_{u^j}]\nonumber\\
&=&c^r_{pq}u^p\tfrac{\partial}{\partial u^r}
\overline{\mathbf{H}}^i\partial_{u^i}+
c_{qp}^i\overline{\mathbf{H}}^p\partial_{u^i}
+\tfrac12\overline{\mathbf{H}}^p
\tfrac{\partial^2}{\partial u^p\partial u^q}\overline{\mathbf{H}}^i\partial_{u^i}-
\tfrac12\tfrac{\partial}{\partial u^q}\overline{\mathbf{H}}^p
\tfrac{\partial}{\partial u^p}\overline{\mathbf{H}^i}\partial_{u^i}\nonumber\\
& &-\tfrac12c^i_{qj}\overline{\mathbf{H}}^j\partial_{u^i}
+\tfrac12c^p_{qj}u^j\tfrac{\partial}{\partial
u^p}\overline{\mathbf{H}}^i\partial_{u^i}\quad\mbox{(mod }\partial_{u^\alpha},\forall \alpha\mbox{)}\nonumber\\
&=&\tfrac12 c^r_{pq}u^p\tfrac{\partial}{\partial u^r}
\overline{\mathbf{H}}^i\partial_{u^i}
+\tfrac12 c^i_{qj}\overline{\mathbf{H}}^j\partial_{u^i}
+\tfrac12\overline{\mathbf{H}}^p
\tfrac{\partial^2}{\partial u^p\partial u^q}
\overline{\mathbf{H}}^i\partial_{u^i}
-\tfrac12\tfrac{\partial}{\partial u^q}\overline{\mathbf{H}}^p
\tfrac{\partial}{\partial u^p}\overline{\mathbf{H}}^i
\partial_{u^i}\nonumber\\
& &\mbox{(mod }\partial_{u^\alpha},\forall \alpha\mbox{)}
,
\label{103}
\end{eqnarray}
where Fact 6, the left invariancy of $\overline{\mathbf{H}}^i$ and Lemma \ref{lemma-4-1} have been applied.

Using Lemma \ref{lemma-4-1},
we also have
\begin{eqnarray}
& &[\overline{\mathbf{G}}_0,
\widetilde{U}_q-(\tfrac12\tfrac{\partial}{\partial u^q}
\overline{\mathbf{H}}^i-\tfrac12 c^i_{qj})\partial_{u^i}]
\nonumber\\
&=&[\overline{\mathbf{G}}_0,\widetilde{U}_q]
-\tfrac12(\tfrac{\partial}{\partial u^q}
\overline{\mathbf{H}}^i-\tfrac12 c^i_{qj}u^j)[\overline{\mathbf{G}}_0,\partial_{u^i}]
-\tfrac12\overline{\mathbf{G}}_0(\tfrac{\partial}{\partial u^q}\overline{\mathbf{H}}^i-c^i_{qj}u^j)\partial_{u^i}
\nonumber\\
&=&-\tfrac12(\tfrac{\partial}{\partial u^q}
\overline{\mathbf{H}}^i-\tfrac12 c^i_{qj}u^j)[\overline{\mathbf{G}}_0,\partial_{u^i}]
=-\tfrac12(\tfrac{\partial}{\partial u^q}
\overline{\mathbf{H}}^i-\tfrac12 c^i_{qj}u^j)[u^p\widetilde{U}_p,\partial_{u^i}]
\nonumber\\
&=&\tfrac12(\tfrac{\partial}{\partial u^q}
\overline{\mathbf{H}}^p-c^p_{qj}u^j)\widetilde{U}_p
-\tfrac12u^p(\tfrac{\partial}{\partial u^q}
\overline{\mathbf{H}}^i-c^i_{qj}u^j)[\widetilde{U}_p,
\partial_{u^i}]\nonumber\\
&=&\tfrac12(\tfrac{\partial}{\partial u^q}
\overline{\mathbf{H}}^p-c^p_{qj}u^j)(
\widetilde{U}^{\overline{\mathcal{H}}}+\tfrac12
(\tfrac{\partial}{\partial u^p}\overline{\mathbf{H}}^i
-c^i_{pr}u^r)\partial_{u^i})-\tfrac12 u^p(
\tfrac{\partial}{\partial u^q}\overline{\mathbf{H}}^r-c^r_{qj}u^j)c^i_{pr}\partial_{u^i}
\nonumber\\& &\mbox{(mod }\partial_{u^\alpha},\forall \alpha
\mbox{)}\nonumber\\
&=&\tfrac14(\tfrac{\partial}{\partial u^q}
\overline{\mathbf{H}}^p-c^p_{qj}u^j)
(\tfrac{\partial}{\partial u^p}\overline{\mathbf{H}}^i-c^i_{pr}u^r)\partial_{u^i}
-\tfrac12u^p(\tfrac{\partial}{\partial u^q}
\overline{\mathbf{H}}^r-c^r_{qj}u^j)c^i_{pr}\partial_{u^i}
\quad
\mbox{(mod }\widetilde{U}_i^{\overline{\mathcal{H}}},
\partial_{u^\alpha},
\forall i,\alpha\mbox{)}\nonumber\\
&=&\tfrac14\tfrac{\partial}{\partial u^q}
\overline{\mathbf{H}}^p
\tfrac{\partial}{\partial u^p}\overline{\mathbf{H}}^i
\partial_{u^i}
-\tfrac14 c^p_{qj}u^j\tfrac{\partial}{\partial u^p}
\overline{\mathbf{H}}^i\partial_{u^i}+
\tfrac14 c^i_{pr}u^r\tfrac{\partial}{\partial u^q}
\overline{\mathbf{H}}^p\partial_{u^i}-\tfrac14
c^p_{qj}c^i_{pr}u^ju^r\partial_{u^i}\nonumber\\
& &\mbox{(mod }\widetilde{U}^{\overline{\mathcal{H}}}_i,
\partial_{u^\alpha},\forall i,\alpha\mbox{)},\label{104}
\end{eqnarray}
where the first summand in the second line vanishes because $\overline{\mathbf{G}}_0=v^i\widetilde{V}_i+
v^\alpha\widetilde{V}_\alpha$ is right invariant, and the third summand in the second line  vanishes because $\overline{\mathbf{H}}^i$'s, $u^i$'s and $\partial_{u^i}$'s are left invariant, i.e.,
$\widetilde{V}_j\overline{\mathbf{H}}^i=
\widetilde{V}_\alpha\overline{\mathbf{H}}^i=0$, $\widetilde{V}_ju^i=\widetilde{V}_\alpha u^i=0$, and $[\widetilde{V}_j,\partial_{u^i}]=[\widetilde{V}_\alpha,\partial_{u^i}]
=0$, $\forall i,j,\alpha$.

Adding (\ref{102}), (\ref{103}) and (\ref{104}), we get (\ref{101}), which prove Theorem \ref{theorem-6-6}.
\end{proof}\medskip

\begin{remark}Though (\ref{100}) is not explicit given in \cite{Hu2015-1} for a homogeneous Finsler metric and a reductive decomposition, L. Huang showed in a private communication that it can be easily deduced from Proposition 3.3, (26) and (30) in \cite{Hu2015-1}.
\end{remark}

The thought in Theorem 1.2 of \cite{Xu2021-2} provides a new interpretation for the homogeneous Riemann curvature formula
(\ref{100}).

\begin{corollary}\label{corollary-6-7}
Let $(G/H,\mathbf{G})$ be a homogeneous spray manifold with a linear decomposition $\mathfrak{g}=\mathfrak{h}+\mathfrak{m}$.
Let $c(t)$ be a geodesic on $(G/H,\mathbf{G})$,
which is defined around $t=0$ and satisfies $c(0)=o$,
$\overline{c}(t)$ the smooth curve on $G$ provided by Lemma \ref{lemma-5-1} for $c(t)$, satisfying $\overline{c}(0)=e$, $c(t)=\overline{c}(t)\cdot o$ and
$y(t)=(L_{c(t)^{-1}})_*(\dot{c}(t))\in\mathfrak{m}\backslash\{0\}$
everywhere, and
$W(t)=(\overline{c}(t))_*(w(t))$
a linearly parallel vector field along $c(t)$, where $w(t)=w^i(t)e_i$ is viewed as a smooth vector field along
the smooth curve $y(t)$ in $\mathfrak{m}\backslash\{0\}$. Then $N(t)=N(y(t),w(t))$
 and $R(t)=\mathbf{R}_{y(t)}(w(t))$ satisfy
 \begin{eqnarray*}
 N(t)=[w(t),\eta]\quad\mbox{and}\quad R(t)=[y(t),[w(t),y(t)]_\mathfrak{h}]_\mathfrak{m}+
 [\eta,N(t)]
 \end{eqnarray*}
 when they are viewed as smooth vector fields along $y(t)$.
\end{corollary}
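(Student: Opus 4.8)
The plan is to derive both identities purely from the two first-order ODEs satisfied by $y(t)$ and $w(t)$, combined with the explicit formula (\ref{100}) of Theorem \ref{theorem-6-6}, after fixing the meaning of the brackets $[w(t),\eta]$ and $[\eta,N(t)]$ as Lie brackets of smooth vector fields on $\mathfrak{m}\backslash\{0\}$ in the generalized sense of Remark \ref{remark-6-10}. First I would record the governing equations: by Theorem \ref{theorem-5-3} the curve $y(t)$ is an integral curve of $-\eta$, so $\tfrac{{\rm d}}{{\rm d}t}y(t)=-\eta(y(t))$, and by the second statement of Theorem \ref{theorem-6-1} the linear parallelism of $W(t)$ is equivalent to $\tfrac{{\rm d}}{{\rm d}t}w(t)=-N(y(t),w(t))-[y(t),w(t)]_\mathfrak{m}$. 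I would also rewrite the definition of the connection operator in the form $D\eta(y,u)=2N(y,u)+[y,u]_\mathfrak{m}$, used repeatedly below, and note that $N(y,\cdot)$ is linear.

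The key observation that makes the generalized bracket well defined is that $\eta(y(t))=-\dot{y}(t)$ is tangent to the curve $y(t)$. Consequently, for any field $Z(t)$ along $y(t)$ the directional derivative of $Z$ in the $\eta$-direction equals $-\tfrac{{\rm d}}{{\rm d}t}Z(t)$, and this is the only transverse derivative of $w(t)$ or $N(t)$ that the brackets require, so no genuine extension off the curve is needed. Writing the flat-space formula $[X,Y]=(DY)X-(DX)Y$ for maps $\mathfrak{m}\backslash\{0\}\to\mathfrak{m}$, I would then compute $[w(t),\eta]=D\eta(y(t),w(t))+\tfrac{{\rm d}}{{\rm d}t}w(t)$; substituting $D\eta(y,w)=2N(y,w)+[y,w]_\mathfrak{m}$ together with the parallelism ODE collapses the right-hand side to $N(y(t),w(t))=N(t)$, which is the first identity.

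For the second identity I would expand $[\eta,N(t)]=-\tfrac{{\rm d}}{{\rm d}t}N(t)-D\eta(y(t),N(t))$ by the same mechanism. Differentiating $N(t)=N(y(t),w(t))$ by the chain rule in both slots, the $y$-derivative produces $-DN(\eta(y),y,w)$ because $\dot{y}=-\eta(y)$, while the $w$-derivative produces $-N(y,N(y,w))-N(y,[y,w]_\mathfrak{m})$ by the parallelism ODE and linearity of $N$ in its second entry; expanding $D\eta(y,N(t))=2N(y,N(y,w))+[y,N(y,w)]_\mathfrak{m}$ and collecting terms shows that $[\eta,N(t)]$ equals precisely the four terms $DN(\eta(y),y,w)-N(y,N(y,w))+N(y,[y,w]_\mathfrak{m})-[y,N(y,w)]_\mathfrak{m}$ appearing in (\ref{100}). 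Adding the remaining summand $[y(t),[w(t),y(t)]_\mathfrak{h}]_\mathfrak{m}$ of (\ref{100}) yields $R(t)$, completing the proof.

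The main difficulty is conceptual rather than computational: one must make rigorous the Lie bracket between the genuine vector field $\eta$, defined on all of $\mathfrak{m}\backslash\{0\}$, and the fields $w(t)$ and $N(t)$, which a priori live only along the curve $y(t)$. The resolution, exactly as in Remark 1.3 of \cite{Xu2021-2}, rests on the integral-curve property $\dot{y}(t)=-\eta(y(t))$, which guarantees that the single transverse derivative occurring in each bracket coincides with the tangential derivative along the curve; I would isolate this observation as a short lemma before carrying out the two calculations.
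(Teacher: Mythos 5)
Your proposal is correct and follows essentially the same route as the paper's own proof: both rest on the integral-curve property $\dot y(t)=-\eta(y(t))$ from Theorem \ref{theorem-5-3}, the parallelism ODE from Theorem \ref{theorem-6-1}, the generalized bracket of Remark \ref{remark-6-10} computed via (\ref{305}), and the substitution of the resulting expressions into the curvature formula (\ref{100}) of Theorem \ref{theorem-6-6}. The computations of $[w(t),\eta]$ and $[\eta,N(t)]$, including the chain-rule expansion of $\tfrac{{\rm d}}{{\rm d}t}N(y(t),w(t))$, match the paper's term for term.
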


\begin{remark}\label{remark-6-10}
Here we need to slightly generalize the Lie bracket between two smooth vector fields as in \cite{Hu2015-2}. Let $X$ be a smooth vector field on $M$ and $Y(t)$ a smooth vector field along an integral curve $c(t)$ for $X$, then $[X,Y(t)]=-[Y(t),X]$ is a well defined smooth vector field along $c(t)$. When $c(t)$ is not constant, we may locally extend $Y(t)$ to a smooth vector field $Z$ on $M$, then $[X,Y(t)]=[X,Z]|_{c(t)}$ is independent of the extension. Using local coordinate, the bracket between $X=X^i\partial_{x^i}$ and
$Y(t)=Y^i(t)\partial_{x^i}|_{c(t)}$ can be presented as
\begin{equation}\label{305}
[X,Y(t)]=(\tfrac{{\rm d}}{{\rm d}t} Y^i(t)\partial_{x^i}-Y^i(t)\tfrac{\partial}{\partial x^i}X^j\partial_{x^j})|_{c(t)}.
\end{equation}
Notice that when $c(t)$ is constant, (\ref{305}) can still be used to
defined $[X,Y(t)]$ along $c(t)$, which is independent of the choice of
local coordinate.
\end{remark}

The proof of Corollary \ref{corollary-6-7} is very similar to that for Theorem 1.2 in \cite{Xu2021-2}.\medskip

\begin{proof}[Proof of Corollary \ref{corollary-6-7}]
By Theorem \ref{theorem-5-3}, $y(t)=(L_{c(t)^{-1}})_*(\dot{c}(t))$ for the geodesic $c(t)$ is an integral curve of $-\eta$. We view
$w(t)=(\overline{c}(t)^{-1})_*(W(t))$ as a smooth vector field along $y(t)$,
so the bracket $[-\eta,w(t)]$ is a well defined smooth vector field along $y(t)$ by Remark \ref{remark-6-10}. Using (\ref{305}), we get
\begin{eqnarray*}
[-\eta,w(t)]=\tfrac{{\rm d}}{{\rm d}t}w(t)+D\eta(y(t),w(t))
=\tfrac{{\rm d}}{{\rm d}t}w(t)+2N(y(t),w(t))+[y(t),w(t)]_\mathfrak{m}.
\end{eqnarray*}
By Theorem \ref{theorem-6-1}, we have
$\tfrac{{\rm d}}{{\rm d}t}w(t)+N(y(t),w(t))+[y(t),w(t)]_\mathfrak{m}=0$ for the linearly parallel vector field $W(t)=(\overline{c}(t))_*(w(t))$ along $c(t)$,
so
$N(t)=N(y(t),w(t))=[-\eta,w(t)]=[w(t),\eta]$.

Using the fact
$\tfrac{{\rm d}}{{\rm d}t}y(t)=-\eta(y(t))$ from Theorem \ref{theorem-5-3} and the linearity of $N(\cdot,\cdot)$ for the second entry,
we have
\begin{eqnarray}\label{306}
\tfrac{{\rm d}}{{\rm d}t}N(y(t),w(t))&=&
DN(-\eta,y(t),w(t))+N(y(t),\tfrac{{\rm d}}{{\rm d}t}w(t))
\nonumber\\
&=&-DN(\eta,y(t),w(t))+
N(y(t),-N(y(t),w(t))-[y(t),w(t)]_\mathfrak{m})
\nonumber\\
&=&-DN(\eta,y(t),w(t))-N(y(t),N(y(t),w(t)))\nonumber\\& &-
N(y(t),[y(t),w(t)]_\mathfrak{m}),
\end{eqnarray}
in which $DN(\eta,y(t),w(t))=\tfrac{{\rm d}}{{\rm d}s}|_{s=0}N(y(t)+s\eta(y(t)),w(t))$.
So
(\ref{305}) and (\ref{306}) imply
\begin{eqnarray*}
[\eta,N(t)]&=&-\tfrac{{\rm d}}{{\rm d}t}N(y(t),w(t))-
D\eta(y(t),N(y(t),w(t)))\\
&=&DN(\eta,y(t),w(t))+
N(y(t),N(y(t),w(t)))+N(y(t),[y(t),w(t)]_\mathfrak{m})\\
& &-(2N(y(t),N(y(t),w(t)))+[y(t),N(y(t),w(t))]_\mathfrak{m})\\
&=&DN(\eta,y(t),w(t))-N(y(t),N(y(t),w(t)))+
N(y(t),[y(t),w(t)]_\mathfrak{m})
\\& &-[y(t),N(y(t),w(t))]_\mathfrak{m}\\
&=&\mathbf{R}_{y(t)}(w(t))-
[y(t),[w(t),y(t)]_\mathfrak{h}]_\mathfrak{m}
=R(t)-[y(t),[w(t),y(t)]_\mathfrak{h}]_\mathfrak{m}.
\end{eqnarray*}
This ends the proof.
\end{proof}\medskip

\noindent
{\bf Acknowledgement}.
This paper is supported by Beijing Natural Science Foundation
(Z180004), National Natural Science
Foundation of China (12131012, 11821101),
The author sincerely thanks Libing Huang, Yuri G. Nikonorov, Ju Tan, Wolfgang Ziller and Joseph A. Wolf for helpful discussions.

\end{document}